\def\thmhead@plain#1#2#3{%
  \thmname{#1}\thmnumber{\@ifnotempty{#1}{ }\@upn{#2}}%
  \thmnote{ {\the\thm@notefont#3}}}
\let\thmhead\thmhead@plain
\newtheorem{theorem}{Theorem}
\newtheorem{corollary}[theorem]{Corollary}
\newtheorem{lemma}[theorem]{Lemma}
\newtheorem{proposition}[theorem]{Proposition}
\newtheorem{conjecture}[theorem]{Conjecture}
\theoremstyle{definition}
\newtheorem{definition}[theorem]{Definition}
\newtheorem{remark}[theorem]{Remark}
\numberwithin{equation}{section}
\numberwithin{theorem}{section}
\DeclareMathOperator{\id}{id}
\DeclareMathOperator{\End}{End}
\DeclareMathOperator{\spa}{span}
\newcommand{\F}{F\langle X\rangle}
\newcommand{\W}{W\langle X\rangle}
\DeclareMathOperator{\V}{var}
\DeclareMathOperator{\I}{Id}
\newcommand{\M}{\mathcal{M}}
\newcommand{\IM}{\mathcal{I}\mathcal{M}}
\DeclareMathOperator{\op}{op}
\DeclareMathOperator{\supp}{supp}
\begin{document}
\title{Multipliers, $W$-algebras and the growth of generalized polynomial identities}

\author{Fabrizio Martino}
\address{Fabrizio Martino, Dipartimento di Matematica e Informatica, Universit\`a degli Studi di Palermo, Via Archirafi 34, 90123, Palermo, Italy.}
\email{fabrizio.martino@unipa.it}

\author{Carla Rizzo}
\address{Carla Rizzo, CMUC, Department of Mathematics, University of Coimbra, 3000-143
Coimbra, Portugal.}
\email{carlarizzo@mat.uc.pt}

\thanks{F. Martino was partially supported by GNSAGA-INdAM}
\thanks{C. Rizzo was financially supported by the Fundação para a Ciência e a Tecnologia (Portuguese Foundation for Science and Technology) under the scope of the projects UID/00324/2025 (https://doi.org/10.54499/UID/00324/2025) (Centre for Mathematics of the University of Coimbra).}

\subjclass[2020]{Primary 16R10, 16R50; Secondary 16P90, 16G30}

\keywords{$W$-algebras, multipliers, matrix algebra, polynomial identity,  generalized polynomial identity, codimension growth, polynomial growth}

\begin{abstract}
Let $A$ be a $W$-algebra over a field $F$ of characteristic zero, where $W$ is any $F$-algebra. We first develop a comprehensive theory of generalized identities independent of the algebraic structure of $W$, using the multiplier algebra of $A.$ Then, we investigate the generalized variety generated by the $k\times k$ matrix algebra with a suitable action, proving that it exhibits almost polynomial growth of the generalized codimensions. Furthermore, we characterize the generalized varieties of almost polynomial growth generated by finite dimensional $W$-algebras. Finally, we provide a counterexample to the Specht property of generalized $T_W$-ideals in characteristic zero.
\end{abstract}

\maketitle

\section{Introduction}
Let $A$ and $W$ be associative algebras over a field $F$ of characteristic zero and let $\F$ be the free algebra generated by the countable set $X=\{x_1,x_2,\ldots\}.$ $A$ is called $W$-algebra if it is a $W$-bimodule such that
\begin{equation*}
		w(a_1 a_2)=(wa_1)a_2, \quad  (a_1 a_2)w=a_1(a_2 w), \quad (a_1 w) a_2= a_1 (w a_2).
	\end{equation*}
for all $a_1,a_2\in A$ and for all $w\in W.$ For instance, $A$ is an $A$-algebra if the action of $A$ on itself is given by left and right multiplication and, if $W\cong F,$ then such definition coincides with the usual one of $F$-algebra.

The free $W$-algebra is denoted by $\W$ and its elements are called $W$-polynomials or generalized polynomials. In particular, we may consider these elements as polynomials containing variables of $X$ and elements of $W$ as some sort of coefficients that may appear also between two or more variables. A $W$-polynomial is an identity for the $W$-algebra $A$ if it vanishes under all substitutions of its variables by elements of $A.$ The set $\I^W(A)$ of all generalized polynomial identities of $A$ is a $T_W$-ideal of $\W,$ i.e. a stable ideal by endomorphisms of $\W$ and, as in the usual case of $F$-algebras, one of the main problems is to find a set of generators of $\I^W(A).$ Nowadays, generators of $T_W$-ideals are known only in a few cases. The interested reader can find in \cite[Proposition 6.5.5]{BeidarMartindaleMikhalevm1996} the generalized identities of the full matrix algebra $M_k(F)$ for all $k\geq 1$ in case $W= M_k(F)$ acts by left and right multiplication, and in \cite{MartinoRizzo2024} the $T_W$-ideal of $UT_2(F),$ the $2\times 2$-upper triangular matrix algebra, in case $W=UT_2$ acts either as the subalgebra $D$ of diagonal matrices or the full algebra $UT_2$ by left and right multiplication.
These are the only known examples of generalized ideals at present.

Generalized identities were first introduced in Amitsur’s seminal paper \cite{Amitsur1965} in the context of primitive rings and later on they were studied by several authors from the perspective of the algebraic structure and properties of the rings satisfying them. We refer to \cite{BeidarMartindaleMikhalevm1996} and its bibliography for an exhaustive dissertation on such topic from this point of view. In more recent years a combinatorial approach arose, focusing on the study of generalized codimensions, which are essentially a quantitative tool for roughly measuring how many generalized identities are satisfied by an algebra. The codimension sequence $c_n(A)$ of a $F$-algebra $A$ was first introduced by Regev in \cite{Regev1972} as the dimension of the space of multilinear polynomials of degree $n$ reduced modulo the polynomial identities of $A.$ In the same paper, he also proved that such sequence is exponentially bounded if $A$ satisfies a non-trivial identity. Later, in 1999, Giambruno and Zaicev in \cite{GiambrunoZaicev1999} captured such exponential growth, providing an explicit method to compute it and solving the Amitsur conjecture, stating that the limit $\lim_{n\rightarrow +\infty} \sqrt[n]{c_n(A)}$ exists and is a non-negative integer called the PI-exponent of $A.$ 

Following this approach, Gordienko in \cite{Gordienko2010} defines the generalized codimension sequence and proves the Amitsur conjecture for any finite dimensional algebra $A$ nonetheless only in the case $W= A$ and the action is given by left and right multiplication. Furthermore, in \cite{MartinoRizzo2024} the authors study the generalized variety generated by $UT_2$ and prove that it has almost polynomial growth of the generalized codimensions if $W=UT_2$ acts either as a subalgebra isomorphic to $F$ or as the subalgebra $D$ by left and right multiplication.

So far, all results on generalized polynomial identities have been obtained by fixing in advance both the algebra $W$ and its action on $A$. In this paper, we take a different approach: we do not specify $W$ beforehand, and we allow its action to be more general than that arising from a subalgebra of $A$ acting by left and right multiplication. This shift is made possible by the weak action representability of the category of associative algebras (see \cite{Janelidze2022}). In particular, in our framework the action of $W$ emerges naturally as a unary operation, whereas in the classical setting it is treated as a nullary one. This perspective allows us to extend the notion of generalized identities to a significantly broader context and to develop a unified and more comprehensive theory.

The present paper is organized as follows. The first part aims to develop a complete, self-contained, and as general as possible theory of $W$-algebras that allows us to disregard the choice of $W.$ To reach this goal, we will introduce the notion of multiplier algebra $\M(A)$ and we establish a duality between $W$-actions and $\M(A)$-actions with an additional condition. It is worth mentioning that multipliers are an extremely versatile tool that appears in various and different areas of mathematics, such as noncommutative analysis in the context of $C^*$-algebras (see, for instance, \cite{DokuchaevExel2004, DokuchaevDelRioSimon2007, Exelbook}) or category theory (see \cite{BroxGarciaMancini2024, CigoliManciniMetere2023, MacLane1958}).

Then, after a brief dissertation about the consequences of the theory that has just been developed on algebras with polynomial identities, the second part is devoted to the study of the $W$-variety generated by $M_k(F)$, with the action of the entire $M_k(F)$ by left and right multiplication. In particular, after computing a basis for the T-ideal of identities of such an algebra — generalizing the result in \cite[Proposition 6.5.5]{BeidarMartindaleMikhalevm1996} — we prove an unexpected result, namely that such $W$-variety exhibits almost polynomial growth of the generalized codimensions.

Thereafter, a classification of the $W$-varieties of almost polynomial growth generated by finite dimensional algebras is provided. Such characterization was also made in other settings, including group graded algebras \cite{Valenti2011}, algebras with involution \cite{GiambrunoMishchenko2001}, special Jordan algebras \cite{Martino2019}, and algebras with derivation with some mild restrictions \cite{Argenti2024, Rizzo2023}.

Finally, the last part of the paper concerns some possible further directions. In particular, a counter-example to the Specht property in characteristic zero will be given in the last section, proving that the $T_W$-ideal of generalized identities of the unital Grassmann algebra $E$ is not finitely generated, if $W$ is not finitely generated and acts either as the finite dimensional Grassmann algebra $E_k$ or the full algebra $E$ by left and right multiplication.

\section{Multipliers and $W$-algebras}
Throughout this paper, $F$ will denote a field of characteristic $0$ and all algebras will be associative $F$-algebras.

Let $A$ be an algebra and $\End_F(A)$ be the algebra of the endomorphisms of $A$ acting on the left of $A.$ The multiplication in $\End_F(A)$ is given by the usual composition of endomorphisms, written by juxtaposition. Moreover, we denote by $\End_F(A)^{\op}$  the \emph{opposite algebra} of $\End_F(A)$,  which is the underlying vector space of $\End_F(A)$ endowed with the opposite product $\cdot^{\op}$ defined by $\psi_1\cdot^{\op} \psi_2:=\psi_2 \psi_1$ for all $\psi_1, \psi_2\in \End_F(A)$.

\begin{definition}\label{def: W-algebra}
    Given an $F$-algebra $W$, we say that $A$ is a \emph{$W$-algebra} if it is a $W$-bimodule, i.e., $A$ is both a left and a right $W$-module with the compatibility relation, for all $w_1,w_2\in W,$ $a\in A,$
\begin{equation}\label{eq: compatibility bimodule}
  (w_1a)w_2 = w_1(aw_2)
\end{equation}
  and also satisfies the following conditions, for all $w\in W,$ $a_1,a_2\in A$
	\begin{equation}\label{conditions W-algebra}
		 (a_1 a_2)w=a_1(a_2 w), \quad w(a_1 a_2)=(wa_1)a_2, \quad (a_1 w) a_2= a_1 (w a_2).
	\end{equation}
Moreover, if $W$ has a unity element $1_W$, we further assume that $1_W a=a 1_W=a$ for all $a\in A$. 
\end{definition}
For instance, if $W=F,$ then $A$ is just a $F$-algebra. Moreover, $W$ has itself the structure of $W$-algebra by taking the left and right $W$-actions as the usual left and right multiplication of $W.$


To formally describe the action of $W$ over an $F$-algebra $A,$ it is useful to introduce the notion of multiplier of $A$ (see \cite{MacLane1958}, where they are called bimultiplications).
Multipliers often appear in the context of $C^*$-algebras and in particular they are used to study partial group actions and the generalization of $C^*$-crossed products (see for instance \cite{DokuchaevExel2004, DokuchaevDelRioSimon2007}).

\begin{definition}
    Let $(R,L)\in\End_F(A)^{\op} \times \End_F(A).$  We say that $(R,L)$ is a  \emph{multiplier} of $A$ if for all $a_1,a_2\in A,$ one has that
    \begin{equation}\label{condition multiplier}
        R(a_1 a_2)= a_1R(a_2), \quad  L(a_1a_2)= L(a_1)a_2, \quad R(a_1)a_2=a_1L(a_2).
    \end{equation}
    We say that $R $ is a \emph{right multiplier} and $L$ is a \emph{left multiplier} of $A.$
\end{definition}

We denote by $\M(A)$ the set of all multipliers of $A$.
This set naturally carries the structure of a unital
$F$-algebra, where the sum and the product are defined component-wise and the unity is $(\id_A, \id_A).$

\begin{definition}
    The unital $F$-algebra $\M(A)$ is called the \emph{multiplier algebra} of $A.$
\end{definition}

Notice that for a fixed $m\in A,$ if one considers the endomorphism of $A$ of right and left multiplications by $m$:
\begin{equation*}\label{eq: L_m and R_m}
     R_m:a\mapsto am; \quad \quad L_m:a\mapsto ma,
\end{equation*}
for all $a\in A,$ then $(R_m,L_m)$ is a multiplier of $A$ called \emph{inner multiplier} of $A,$  and 
\begin{equation}\label{eq: omomorfismo A e M(A)}
    \begin{matrix}
    \mu: & A & \rightarrow & \M(A) \\
    \null & m & \mapsto & (R_m,L_m)
\end{matrix}
\end{equation}
is a homomorphism of $F$-algebras. Also, $\IM(A):=\mu(A)$ is a two-sided ideal of $\M(A),$ called \emph{inner multiplier ideal} of $A.$
Conversely, if $A$ has unity $1_A,$ then for any $(R,L)\in\M(A)$
$$
R(1_A)= R(1_A)1_A=1_AL(1_A)=L(1_A),
$$
thus letting $m=R(1_A)=L(1_A),$ we get
\begin{equation*}
    \begin{split}
        &R(a) = R(a1_A) = aR(1_A) = am\\
         &L(a) = L(1_A a)=L(1_A) a = ma
    \end{split}
\end{equation*}
for all $a\in A.$ Hence $(R,L)=(R_m,L_m)\in \IM(A).$  
This leads us to the following result.

\begin{proposition}\label{isomorfismo con algebra di mult}
$\mu$ is an isomorphism
    if and only if $A$ has unity.
\end{proposition}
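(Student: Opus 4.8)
The plan is to establish the two implications separately, relying mostly on the computation carried out immediately before the statement.

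For the \emph{if} direction, I would assume that $A$ has a unity $1_A$ and verify that $\mu$ is bijective. Surjectivity has essentially already been proved above: for an arbitrary multiplier $(R,L)\in\M(A)$, putting $m=R(1_A)=L(1_A)$ forces $R=R_m$ and $L=L_m$, whence $(R,L)=\mu(m)$ lies in $\IM(A)$. For injectivity, I would simply note that if $\mu(m)=(R_m,L_m)$ is the zero multiplier of $\M(A)$, then in particular $0=R_m(1_A)=1_A m=m$, so $m=0$. Thus $\mu$ is an injective and surjective homomorphism of $F$-algebras, hence an isomorphism.

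For the \emph{only if} direction, I would assume that $\mu$ is an isomorphism and exploit only its surjectivity together with the fact that $\M(A)$ is \emph{unital} with identity $(\id_A,\id_A)$. Surjectivity yields an element $e\in A$ with $\mu(e)=(R_e,L_e)=(\id_A,\id_A)$; unwinding the definitions of the inner multipliers then gives $ae=R_e(a)=a$ and $ea=L_e(a)=a$ for every $a\in A$, so $e$ is a two-sided identity and $A$ is unital.

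There is no real obstacle here: the only substantive point is the surjectivity in the unital case, which is already carried out in the discussion preceding the proposition, and the remaining verifications (triviality of the kernel of $\mu$, and the production of a unit from a preimage of $(\id_A,\id_A)$) are immediate from the defining formulas of $R_m$ and $L_m$.
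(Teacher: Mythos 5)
Your proof is correct and follows essentially the same route as the paper: the surjectivity argument in the unital case is exactly the computation the paper carries out just before the proposition, and the injectivity of $\mu$ and the recovery of a unit from a preimage of $(\id_A,\id_A)$ are the immediate verifications the paper leaves implicit. No gaps.
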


To establish a correspondence between multipliers and $W$-actions,  it is necessary to ensure that the multipliers are permutable.
In other words, given two multipliers $(R_1,L_1)$ and $(R_2,L_2),$ we are wondering whether 
\begin{equation}\label{commutativita multipliers}
    R_{1}L_{2} = L_{2}R_{1} 
\end{equation}
hold or not.

In general this is not true; in fact, one can consider an algebra $A$ with trivial multiplication, i.e., $ab=0$ for all $a,b\in A.$ In this case, any couple of linear transformations $(f,g)$ is a multiplier of $A,$ but of course we cannot expect that \eqref{commutativita multipliers} may hold in such general case, since this should imply that in particular $fg=gf$ for all $f,g\in\End_F(A).$
Hence, we need an extra condition on $A.$

Clearly, any multiplier $(R,L)$ permutes with any inner multiplier.  Consequently, by Proposition \ref{isomorfismo con algebra di mult}, it follows that in the case of an algebra with unity, we have the following result. 

\begin{corollary}\label{mutiplier per algebre con unit}
    Let $A$ be a unital $F$-algebra. Then 
     $A\cong\IM(A)=\M(A)$
    and 
    $
    R_aL_b = L_bR_a
    $
    for all $a,b\in A.$
\end{corollary}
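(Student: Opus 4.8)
The plan is to deduce both assertions from Proposition \ref{isomorfismo con algebra di mult} together with the observation that an arbitrary multiplier always permutes with an inner one; essentially no independent work is needed.

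First I would settle the identifications. Since $A$ is unital, Proposition \ref{isomorfismo con algebra di mult} guarantees that the homomorphism $\mu$ of \eqref{eq: omomorfismo A e M(A)} is an isomorphism of $F$-algebras. By definition $\IM(A)=\mu(A)$, so surjectivity of $\mu$ gives $\IM(A)=\M(A)$, while injectivity gives the isomorphism $A\cong\IM(A)$. This yields the chain $A\cong\IM(A)=\M(A)$ immediately.

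For the commutation relation I would reduce to the permutability of an arbitrary multiplier with an inner one, i.e.\ to the instance of \eqref{commutativita multipliers} in which one factor is inner. Concretely, fix an arbitrary multiplier $(R,L)$ and an inner multiplier $(R_m,L_m)$ and evaluate on $a\in A$: since $L_m(a)=ma$, the first relation in \eqref{condition multiplier} gives $R(ma)=mR(a)=L_m(R(a))$, so that $RL_m=L_mR$ as endomorphisms of $A$. Now, by the first part every multiplier of $A$ is inner, hence in particular $(R_a,L_a)$ and $(R_b,L_b)$ both lie in $\IM(A)$ for all $a,b\in A$. Applying the permutability just verified to the multiplier $(R_a,L_a)$ and the inner multiplier $(R_b,L_b)$ yields $R_aL_b=L_bR_a$, as required.

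I do not expect a genuine obstacle here: the result is a direct corollary of the preceding proposition combined with a one-line computation using the multiplier axioms \eqref{condition multiplier}. The only point demanding minimal care is keeping track of the left/right multiplier conventions, so that the correct relation among the three identities in \eqref{condition multiplier} is invoked at each step.
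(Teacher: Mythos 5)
Your proof is correct and follows exactly the paper's route: the paper derives this corollary from the remark that any multiplier permutes with any inner multiplier, combined with Proposition \ref{isomorfismo con algebra di mult}, which is precisely your argument (you merely spell out the ``clearly'' step via the relation $R(ma)=mR(a)$ from \eqref{condition multiplier}). No gaps; the left/right conventions are handled correctly.
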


Unital algebras are not the only type of algebras for which the multiplier algebra satisfies the permutability property, as highlighted in the following proposition.

Recall that $A$ is said \emph{non-degenerate algebra} if there is no nonzero element $a\in A$ such that $ab = ba = 0,$ for all $b\in A,$ or equivalently if the homomorphism $\mu$ defined in \eqref{eq: omomorfismo A e M(A)}
is injective. Clearly, every unital algebra is non-degenerate.

\begin{proposition}[{\cite[Proposition 7.9]{Exelbook}}]
\label{pro: commutazione tra pezzi di mult}
    Let $A$ be either non-degenerate or idempotent, i.e., $A^2=A.$ Then 
    $
    R_2L_1 = L_1R_2,
    $
    for all $(R_1,L_1),(R_2,L_2)\in\M(A).$
\end{proposition}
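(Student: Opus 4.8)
The plan is to fix an arbitrary $a\in A$, set $d:=(R_2L_1-L_1R_2)(a)$, and show $d=0$, treating the two hypotheses by parallel but distinct arguments that rely solely on the three defining relations in \eqref{condition multiplier}. In both cases the computation is purely formal: everything reduces to applying, in the right order, the right-multiplier relation $R(a_1a_2)=a_1R(a_2)$, the left-multiplier relation $L(a_1a_2)=L(a_1)a_2$, and the linking relation $R(a_1)a_2=a_1L(a_2)$, for the appropriate multiplier.

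For the idempotent case, I would evaluate the two operators on a product $a_1a_2\in A^2$ and show that both collapse to the same element. Applying first the left-multiplier relation for $L_1$ and then the right-multiplier relation for $R_2$ yields
\[
R_2L_1(a_1a_2)=R_2\bigl(L_1(a_1)a_2\bigr)=L_1(a_1)R_2(a_2),
\]
while applying first the right-multiplier relation for $R_2$ and then the left-multiplier relation for $L_1$ yields
\[
L_1R_2(a_1a_2)=L_1\bigl(a_1R_2(a_2)\bigr)=L_1(a_1)R_2(a_2).
\]
Since $A=A^2$, every element of $A$ is a finite sum of such products, and both $R_2L_1$ and $L_1R_2$ are $F$-linear, so the two operators agree on all of $A$.

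For the non-degenerate case, recall that non-degeneracy means precisely that no nonzero element is annihilated by $A$ on both sides; hence it suffices to show $db=0=bd$ for every $b\in A$ and then conclude $d=0$. To get $db=0$, I would multiply $d$ on the right by $b$: the linking relation for the second multiplier turns the first summand into $L_1(a)L_2(b)$, while for the second summand the left-multiplier relation for $L_1$ followed by the linking relation for the second multiplier again gives $L_1(a)L_2(b)$, so the two summands cancel. Symmetrically, multiplying on the left by $b$ and using the right-multiplier relation for $R_2$ together with the linking relation for the first multiplier reduces both summands to $R_1(b)R_2(a)$, so $bd=0$. Non-degeneracy then forces $d=0$.

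I expect no genuine conceptual obstacle here; the difficulty is entirely bookkeeping. The only points requiring care are choosing, at each rewriting step, the correct one of the three relations for the correct multiplier, and—in the non-degenerate case—recognizing that one must verify annihilation from \emph{both} sides, since under mere non-degeneracy a one-sided vanishing does not suffice to conclude $d=0$.
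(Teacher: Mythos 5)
Your proof is correct. The paper itself gives no argument for this proposition --- it is quoted from Exel's book (Proposition 7.9) --- and your two-case computation (agreement of $R_2L_1$ and $L_1R_2$ on products plus linearity when $A^2=A$; showing the difference $d=(R_2L_1-L_1R_2)(a)$ is a two-sided annihilator, correctly checking \emph{both} $db=0$ and $bd=0$, when $A$ is non-degenerate) is exactly the standard argument behind the cited result, so there is nothing to add.
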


We are now in a position to establish a connection between multiplier algebras and $W$-algebras.
To this end, recall that if $A$ is both a right and a left $W$-module, then the right and left actions of $W$ on $A$  induce the following $F$-algebras homomorphisms
\begin{equation*}
  \rho: W\rightarrow\End_F(A)^{\op}  \quad \text{ and }\quad \lambda:W\rightarrow\End_F(A)
\end{equation*}
where $\rho(w)(a) = aw$ and $\lambda(w)(a)= wa,$ for all $w\in W$ and $a\in A,$ called respectively \emph{right} and \emph{left representation} of $W$ on $A$, and vice versa. 

Now, let $A$ be a $W$-algebra. Since $A$  is both a left and right $W$-module, one can consider the corresponding right and left representations $\rho$ and $\lambda$ of $W$ on $A$. From \eqref{conditions W-algebra}, it directly follows that for all $w\in W$,
$$
\big( \rho(w), \lambda(w)\big)\in\M(A).
$$
Then, if one sets
\begin{equation}\label{omomorphism from W to M(A)}
    \begin{matrix}
    \Phi: & W & \rightarrow & \M(A) \\
    \null & w & \mapsto & \big(\rho(w), \lambda(w)\big)
\end{matrix}
\end{equation}
we get a homomorphism of $F$-algebras which, by \eqref{eq: compatibility bimodule}, satisfies the additional condition, for all  $w_1,w_2\in W,$
\begin{equation}\label{Phi permutability}
    \rho(w_2)\lambda(w_1)=\lambda(w_1)\rho(w_2).
\end{equation}

The converse is generally not true because not all multipliers of $A$ satisfy the condition \eqref{commutativita multipliers} of permutability. However, given an algebra $A$ and a homomorphism $\Phi:W\to \M(A)$ defined by $\Phi(w)=\big(R^{(w)},L^{(w)}\big)$ for all $w\in W$ such that the subalgebra $\overline{W}=\Phi(W)$ of $\M(A)$ satisfies the condition of permautability, i.e.,  $ R^{(w_1)}L^{(w_2)}=L^{(w_2)}R^{(w_1)}$ for all $w_1,w_2\in W,$ then $A$ inherits a natural structure of $W$-algebra by setting, for all $w\in W,$ $a\in A,$
\begin{equation*}
  aw := 
  R^{(w)}(a),   \quad wa := 
  L^{(w)}(a).
\end{equation*}
In particular, if $A$ is either non-degenerate or idempotent, then by Proposition \ref{pro: commutazione tra pezzi di mult} any homomorphism from $W$ to $\M(A)$ defines a structure of $W$-algebra on $A$.

Note that the above characterization aligns with the fact that the category of associative $F$-algebras is weakly action representable, as proved by Janelidze in \cite{Janelidze2022}. For an overview of the basic constructions, we also refer the interested reader to Section 2 of \cite{CigoliManciniMetere2023}.
Accordingly, we have the following definition. 

\begin{definition}[{\cite{Janelidze2022}}]
    The homomorphism $\Phi$ defined in \eqref{omomorphism from W to M(A)} with the property \eqref{Phi permutability} is called \emph{acting homomorphism} of $W$ on $A.$
\end{definition}

As a result, the action of $W$ on $A$ induces an action of the subalgebra $\overline{W}:=\Phi(W)$ of $\M(A)$ on $A.$ In other words, $A$  can be regarded as a $\overline{W}$-algebra. In this sense, we will say that $W$ acts on $A$ as the algebra $\overline{W}.$ 
 Furthermore, it is worth highlighting that \cite[Proposition 1.11]{BroxGarciaMancini2024}, closely related to \cite[Proposition 4.5]{Janelidze2022}, further establishes that for any $W$-algebra $A$ the acting homomorphism $\Phi$ such that $A$ is a $\overline{W}$-algebra is unique.

Since $A$ may admit different $W$-algebra structures corresponding to different acting homomorphisms $\Phi$, and hence to different images $\overline{W}\subseteq \M(A)$, whenever it is necessary to indicate the chosen action, we denote the $W$-algebra $A$ by $A^{\overline{W}}$.

\smallskip

Now, we state the following result on the structure of unital $W$-algebras. Its proof is an immediate consequence of the previous arguments, Corollary \ref{mutiplier per algebre con unit}, and the observation that, when $W$ is a unital algebra, $\Phi$ is a homomorphism of unital algebras, in particular satisfying $\Phi(1_W) = (\id_A, \id_A).$

\begin{proposition}\label{Wazioni per algebre con unit}
    Let $W$ be an $F$-algebra. If $A$ is a $W$-algebra with unity $1_A$, then the action of $W$ on $A$ is equivalent to the action of a suitable subalgebra $B$ of $A$ by left and right multiplication. Moreover, if $W$ is a unital algebra, then $1_A\in B$.
\end{proposition}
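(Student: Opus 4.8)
The plan is to exploit the fact that, for a unital algebra, the multiplier algebra collapses onto the inner multipliers, so that every $W$-action is already realized inside $A$ itself. Concretely, I would first invoke Corollary \ref{mutiplier per algebre con unit}: since $A$ has unity, the map $\mu$ of \eqref{eq: omomorfismo A e M(A)} is an isomorphism and $\M(A) = \IM(A) = \mu(A)$. I would then bring in the acting homomorphism $\Phi\colon W \to \M(A)$ of \eqref{omomorphism from W to M(A)} that encodes the given $W$-action, together with its image $\overline{W} = \Phi(W)$, which is a subalgebra of $\M(A)$.

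The core step is to transport $\overline{W}$ back into $A$. Because $\overline{W} \subseteq \M(A) = \mu(A)$ and $\mu$ is an isomorphism, I would set $B := \mu^{-1}(\overline{W})$, a subalgebra of $A$ isomorphic to $\overline{W}$. For each $w \in W$ there is then a unique $b \in B$ with $\Phi(w) = \mu(b) = (R_b, L_b)$, and unwinding the definitions of $\rho$ and $\lambda$ gives $aw = \rho(w)(a) = R_b(a) = ab$ and $wa = \lambda(w)(a) = L_b(a) = ba$ for all $a \in A$. Hence the action of $W$ on $A$ coincides with the action of $B$ by right and left multiplication, which is exactly the asserted equivalence. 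The permutability \eqref{commutativita multipliers} that is needed for this to constitute a genuine $B$-algebra structure is precisely the relation $R_b L_{b'} = L_{b'} R_b$ guaranteed by Corollary \ref{mutiplier per algebre con unit}, so no extra hypothesis is required.

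For the final clause I would use the observation recorded just before the statement: when $W$ is unital, $\Phi$ is a homomorphism of unital algebras, whence $\Phi(1_W) = (\id_A, \id_A)$. Since $(\id_A, \id_A) = (R_{1_A}, L_{1_A}) = \mu(1_A)$, applying $\mu^{-1}$ yields $1_A = \mu^{-1}(\Phi(1_W)) \in \mu^{-1}(\overline{W}) = B$, giving $1_A \in B$.

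The argument is essentially bookkeeping, so I do not anticipate a genuine obstacle; the only point deserving a little care is to verify cleanly that the action transported through the isomorphism $\mu$ agrees on the nose with ordinary multiplication in $B$, since it is this identification that makes the word \emph{equivalent} precise. That verification is immediate from the definitions of $R_b$, $L_b$, $\rho$ and $\lambda$, and is where I expect the bulk of the (minor) writing to go.
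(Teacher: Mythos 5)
Your proposal is correct and follows essentially the same route as the paper, which states that the result is an immediate consequence of Corollary \ref{mutiplier per algebre con unit} (so that $\mu$ is an isomorphism onto $\M(A)$) together with the observation that $\Phi(1_W)=(\id_A,\id_A)$ when $W$ is unital. Your writing merely spells out the bookkeeping the paper leaves implicit: pulling $\overline{W}=\Phi(W)$ back through $\mu^{-1}$ to get $B$, and checking that the transported action is left and right multiplication by elements of $B$.
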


As a consequence, for a unital $W$-algebra $A$, we say that the $W$-action is given by left and right multiplication by a subalgebra $B\subseteq A$ if $\Phi(W)\cong B$. When it is necessary to specify the action, we denote $A$ by $A^B$.

\smallskip

Notice that the subalgebra $\overline{W}\subseteq \M(A)$ has a natural structure of $W$-algebra by left and right multiplication, more precisely, by defining $w\Bar{w}:=\Phi(w)\Bar{w}$ and $\Bar{w}w:=\Bar{w}\Phi(w)$, for all $w\in W$ and $\Bar{w}\in \overline{W}.$ 

Since $A$ is also a $\overline{W}$-algebra, we can define a multiplication on the direct sum of vector spaces $\overline{W}\oplus A$ as follows:
\begin{equation*}
(\Bar{w}_1,a_1) (\Bar{w}_2,a_2) :=
(\Bar{w}_1\Bar{w}_2, 
\Bar{w}_1 a_2 + a_1 \Bar{w}_2 + a_1 a_2)
\end{equation*}
where  $\Bar{w}_1 a_2=L_1(a_2)$ and $ a_1 \Bar{w}_2=R_2(a_1),$ with $  \Bar{w}_1=(R_1,L_1), \Bar{w}_2=(R_2,L_2)\in \overline{W}, a_1,a_2\in A.$ We refer to $\overline{W}\oplus A$ equipped with this multiplication as the \emph{semi-direct product} of $\overline{W}$ and $A,$ and we denote it by $\overline{W} \ltimes A$. 

Remark that this construction naturally leads to the following diagram:
\begin{equation*}\label{split extension}
    \begin{tikzcd}
0 \arrow[r] & A \arrow[r, "i_2"] & \overline{W} \ltimes A \arrow[r, "\pi_1", shift left] & \overline{W} \arrow[r] \arrow[l, "i_1", shift left] & 0
\end{tikzcd}
\end{equation*}
with $\pi_1(\Bar{w},a)=\Bar{w},$ $i_1(\Bar{w})=(\Bar{w},0)$ and $i_2(a)=(0,a),$ respectively. Since the above diagram satisfies that $\pi_1 \circ i_1=id_{\overline{W}}$ and $i_2(A)$ is in the kernel of $\pi_1,$ it is a split extension of $F$-algebras. 
For further details on split extensions of algebras and their relation with the (inner) action of $F$-algebras, we refer the interested reader to \cite{BorceuxJanelizedKelly2005, Janelidze2022}, where the equivalence between the two concepts is described in detail.

\begin{definition}
An subalgebra (ideal) $B$ of a $W$-algebra $A$ is called \emph{$W$-subalgebra} (\emph{$W$-ideal}) if it is $W$-invariant, i.e., $WB, BW\subseteq B$ where 
$ WB=\{ wb \ : \ w\in W, \ b\in B\}$ and $BW=\{ bw \ : \  b\in B, \ w\in W\}.$
\end{definition}

Thus, we have the following proposition that connects $W$-algebras (not necessarily unital) with unital $W$-algebras whose structure is well understood by the above proposition. 

\begin{proposition}\label{prop: semi-direct product}
    Let $A$ be a $W$-algebra, $\Phi$ be the acting homomorphism of $W$ on $A$ and denote $\overline{W}=\Phi(W)$. Then the semi-direct product $\overline{W} \ltimes A$ is a $W$-algebra such that we can identify $A$ with a suitable $W$-subalgebra  of $\overline{W}\ltimes A$. Moreover, if $\overline{W}$ is a unital, then $\overline{W} \ltimes A$ is also unital. 
\end{proposition}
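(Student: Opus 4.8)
The plan is to verify the three assertions in order: that $\overline{W}\ltimes A$ is a $W$-algebra, that $A$ embeds as a $W$-subalgebra, and finally the unital claim. The main point is that all the hard structural work has already been done: we have established that $A$ is a $\overline{W}$-algebra via the acting homomorphism $\Phi$, that $\overline{W}=\Phi(W)$ satisfies the permutability condition \eqref{commutativita multipliers}, and that $\overline{W}$ carries a $W$-action by $w\bar w=\Phi(w)\bar w$ and $\bar w w=\bar w\Phi(w)$. So the bulk of the argument is organizational.

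First I would record the $W$-action on the semi-direct product. Since $\overline{W}\ltimes A$ is built on the vector space $\overline{W}\oplus A$ and both summands are $W$-modules (via left/right multiplication on $\overline{W}$ and via the $\overline{W}$-action on $A$), we define componentwise $w(\bar w,a):=(w\bar w, L^{(w)}(a))$ and $(\bar w,a)w:=(\bar w w, R^{(w)}(a))$, where $\Phi(w)=(R^{(w)},L^{(w)})$. I would then check that this makes $\overline{W}\ltimes A$ into a $W$-bimodule satisfying \eqref{eq: compatibility bimodule}; here the compatibility follows from the permutability $R^{(w_1)}L^{(w_2)}=L^{(w_2)}R^{(w_1)}$ inherited by $\overline{W}$ together with the associativity of multiplication in $\overline{W}$.

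The heart of the verification is the three conditions \eqref{conditions W-algebra} for $\overline{W}\ltimes A$. These reduce, via the componentwise definition of the product on $\overline{W}\ltimes A$, to the corresponding conditions holding separately on $\overline{W}$ and on $A$ together with the mixed compatibility between the $\overline{W}$-action on $A$ and the inner structure of $\overline{W}$. Concretely, expanding a triple product such as $w\big((\bar w_1,a_1)(\bar w_2,a_2)\big)$ and comparing with $\big(w(\bar w_1,a_1)\big)(\bar w_2,a_2)$ produces, in the $A$-component, terms that are governed exactly by the multiplier relations \eqref{condition multiplier} satisfied by each $\Phi(w)$ and by the fact that $A$ is a $\overline{W}$-algebra. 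I expect this bookkeeping to be the one place requiring genuine care, since several summands $\bar w_1 a_2$, $a_1\bar w_2$, $a_1 a_2$ interact and one must track which multiplier relation kills which cross-term; I would carry out one representative identity in full and indicate that the other two are symmetric.

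For the embedding, I would take the injective map $i_2:A\to \overline{W}\ltimes A$, $a\mapsto(0,a)$, which is an algebra homomorphism by construction, and observe that its image is a $W$-ideal: indeed $w(0,a)=(0,L^{(w)}(a))\in i_2(A)$ and $(0,a)w=(0,R^{(w)}(a))\in i_2(A)$, so $W\cdot i_2(A)$ and $i_2(A)\cdot W$ lie in $i_2(A)$, and it is an ideal of $\overline{W}\ltimes A$ by the split-extension structure already noted. This identifies $A$ with a $W$-subalgebra (in fact $W$-ideal) of $\overline{W}\ltimes A$. Finally, if $\overline{W}$ is unital with unit $\bar e$, I would check that $(\bar e,0)$ is a two-sided unit for $\overline{W}\ltimes A$: the product $(\bar e,0)(\bar w,a)=(\bar w,\,\bar e a)$, and since $\bar e=(\id_A,\id_A)$ forces $L^{(\bar e)}=\id_A$ (as $\bar e$ is the identity multiplier), the $A$-component is just $a$; the right-unit computation is symmetric using $R^{(\bar e)}=\id_A$. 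This completes the proof, and no step presents a serious obstacle beyond the one multilinear verification of \eqref{conditions W-algebra} flagged above.
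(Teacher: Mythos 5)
Your proposal is correct and follows essentially the same route as the paper's own (very terse) proof: define the $W$-action on $\overline{W}\ltimes A$ componentwise, identify $A$ with the $W$-subalgebra $i_2(A)$, and take $(1_{\overline{W}},0)$ as the unit; you merely spell out the bimodule/multiplier verifications that the paper compresses into ``naturally inherits.'' Note only that your step $1_{\overline{W}}=(\id_A,\id_A)$ rests on the same convention the paper adopts (a unital $\overline{W}$ acting via the identity multiplier, as in the discussion preceding Proposition \ref{Wazioni per algebre con unit}), so you and the paper share that implicit assumption rather than you introducing a new gap.
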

\begin{proof}
    The algebra $\overline{W} \ltimes A$ naturally inherits the structure of $W$-algebra from the $W$-algebra structures of $\overline{W}$ and $A$. Specifically, for all $w \in W$, $\Bar{w} \in \overline{W}$, and $a \in A$, the right and left actions are defined as $(\Bar{w}+a)w:=\Bar{w}w+aw$ and  $w(\Bar{w}+a):=w\Bar{w}+wa$. Moreover, if we consider the homomorphism $i_2$ defined in the diagram above, then $i_2(A)$ is a $W$-subalgebra  of $\overline{W}\ltimes A$ isomorphic, as $W$-algebra, to $A$.
    Finally, if $\overline{W}$ is a unital, then $\overline{W} \ltimes A$ is a unital $F$-algebra, with the unity given by $(1_{\overline{W}},0).$ 
\end{proof}

\section{The Wedderburn-Malcev decomposition}

In this section, we describe the structure of finite-dimensional $W$-algebras. To this end, we first describe the action of multipliers on finite-dimensional algebras.

Let $A$ be a finite dimensional algebra and denote by $J=J(A)$ the Jacobson radical of $A$. Recall that $A$ is called \emph{semisimple} if and only if $J=0$, and $A$ is called \emph{simple} if it has no nontrivial ideals and $A^2= 0$.

Since the base field $F$ is of characteristic zero, by Wedderburn-Malcev Theorem for associative algebras (see \cite[Theorem 3.4.3]{GZbook}), there exists a unique, up to isomorphism, maximal semisimple subalgebra $B\subseteq A$ such that we can write $A$ as a direct sum of vector spaces
\begin{equation}\label{W-M decomposition}
    A=B+J.
\end{equation}
Then we have the following results.

\begin{proposition} \label{pro: multipliers and WM decomposition}
    Let $A$ be a finite dimensional algebra over a field $F$ of characteristic zero and $J$ be its Jacobson radical.
    Then $R(J), L(J)\subseteq J$ for all $(R,L)\in \M(A).$
\end{proposition}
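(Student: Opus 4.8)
The plan is to exploit the three defining identities of a multiplier together with the fact that the semisimple quotient $A/J$ is automatically unital. I will prove $L(J)\subseteq J$ in detail; the inclusion $R(J)\subseteq J$ then follows by the left--right symmetric argument.

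First I would record two consequences of the multiplier axioms \eqref{condition multiplier}, using that $J$ is a two-sided ideal of $A$. For $j\in J$ and $a\in A$, the identity $L(a_1a_2)=L(a_1)a_2$ gives $L(j)a=L(ja)\in L(J)$, since $ja\in JA\subseteq J$; hence $L(J)$ is a right ideal of $A$. On the other hand, the identity $R(a_1)a_2=a_1L(a_2)$ gives $aL(j)=R(a)j$, and since $R(a)\in A$ and $j\in J$ we get $aL(j)\in AJ\subseteq J$; hence $A\,L(J)\subseteq J$. Both computations are short and are really the only place where the multiplier structure is used.

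Next I pass to the quotient $\overline{A}:=A/J$ with projection $\pi$. The first consequence shows that $V:=\pi(L(J))$ is a right ideal of $\overline{A}$, and the second shows that $\overline{A}\,V=0$. Here is the crucial point: $\overline{A}$ is a finite-dimensional algebra with $J(\overline{A})=0$, hence semisimple Artinian, and therefore unital with identity $\overline{1}$. Consequently $V=\overline{1}\,V\subseteq\overline{A}\,V=0$, so $L(J)\subseteq\ker\pi=J$.

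Finally, the symmetric computation handles $R$: using $R(a_1a_2)=a_1R(a_2)$ one checks $aR(j)=R(aj)\in R(J)$, so $R(J)$ is a left ideal, while using $R(a_1)a_2=a_1L(a_2)$ one checks $R(j)a=jL(a)\in JA\subseteq J$, so $R(J)\,A\subseteq J$; passing to $\overline{A}$ and now multiplying by $\overline{1}$ on the right gives $R(J)\subseteq J$. The step I would be most careful about is precisely the appeal to unitality of $A/J$: this is what lets one-sided annihilation force the image to vanish, and it is exactly the device that makes the argument go through even when $A$ itself has no unit. No assumption on idempotency, non-degeneracy, or permutability of the multipliers is needed.
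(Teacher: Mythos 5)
Your proof is correct. It shares with the paper's proof the decisive computation --- the third multiplier axiom in \eqref{condition multiplier} gives $R(j)a=jL(a)\in J$ and $aL(j)=R(a)j\in J$ for all $a\in A$, $j\in J$ --- but you close the argument by a different device. The paper stays inside $A$: it invokes the Wedderburn--Malcev splitting $A=B+J$ of \eqref{W-M decomposition}, writes $R(j)=b+j'$ with $b\in B$, $j'\in J$, and uses semisimplicity of $B$ (so that $bB=0$ forces $b=0$) to conclude $R(j)\in J$. You instead pass to the quotient $\overline{A}=A/J$ and use that a finite-dimensional algebra with zero radical is semisimple, hence unital, so that $\overline{A}\,\pi(L(J))=0$ (resp. $\pi(R(J))\,\overline{A}=0$) forces $\pi(L(J))=0$ (resp. $\pi(R(J))=0$). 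Both devices ultimately express that a semisimple algebra has no nonzero one-sided annihilator, but yours is more economical and slightly more general: it never needs the existence of a semisimple complement $B$, so it avoids Wedderburn--Malcev and with it the characteristic-zero (separability) hypothesis; your argument works for finite-dimensional algebras over any field, indeed for Artinian rings. What the paper's version buys is that the whole section is organized around the decomposition $A=B+J$, so its proof feeds directly into Proposition \ref{pro: simple algebras and multipliers} and Theorem \ref{thm: WM decomposizion W-alg}. One small trim: your observations that $L(J)$ is a right ideal and $R(J)$ a left ideal are never used --- unitality of $\overline{A}$ together with $\overline{A}\,V=0$ already gives $V=\overline{1}\,V=0$.
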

\begin{proof}
   Let $A=B+J$ as in \eqref{W-M decomposition}, and take $(R,L)\in \M(A)$ and $j\in J$. Then there exist $b\in B$ and $j'\in J$ such that $R(j)=b+j'$. Hence, for all $\Bar{b}\in B$, $R(j)\Bar{b}=jL(\Bar{b})\in J$ since $J$ is an ideal of $A$ and by \eqref{condition multiplier}. As a consequence, $R(j)\Bar{b}=b\Bar{b}+j' \Bar{b}\in J$. Thus, given that $b\in B,$ $b \Bar{b}=0$ for all $\Bar{b}\in B.$ Since $B$ is semisimple, $b$ must be equal to $0$, and $R(j)=j'\in J$. Analogously, we can prove that $L(j)\in J$. 
\end{proof}

Similarly, we can prove the following.
\begin{proposition}\label{pro: simple algebras and multipliers}
  If $A$ is a finite dimensional algebra such that $A=B_1 \oplus \cdots \oplus B_k + J$, where $B_1, \ldots, B_k$ are simple algebras and $J$ is the Jacobson radical of $A$, then $R(B_i),L(B_i)\subseteq B_i+J$, for all $(R,L)\in \M(A)$ and $1\leq i \leq k.$
\end{proposition}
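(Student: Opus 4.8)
The plan is to mimic the proof of Proposition \ref{pro: multipliers and WM decomposition}, exploiting the multiplicative conditions \eqref{condition multiplier} together with the unit elements of the simple components. First I would decompose the semisimple subalgebra $B=B_1\oplus\cdots\oplus B_k$, noting that each finite-dimensional simple algebra $B_i$ possesses a unit $e_i$, and that these units form a family of orthogonal idempotents, i.e. $e_ie_j=\delta_{ij}e_i$; this orthogonality is inherited from the relations $B_iB_j=0$ for $i\neq j$ holding inside $B$. I would also record that, as vector spaces, $A=B_1\oplus\cdots\oplus B_k\oplus J$, so that every element of $A$ has a unique expression as $\sum_\ell b_\ell + j$ with $b_\ell\in B_\ell$ and $j\in J$.

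Next, I fix $(R,L)\in\M(A)$ and $b\in B_i$. Since $e_i$ is the unit of $B_i$, I would write $b=e_ib$ and apply the first identity in \eqref{condition multiplier} to obtain $R(b)=R(e_ib)=e_iR(b)$. Writing $R(b)=\sum_{\ell}b_\ell+j$ in the decomposition above and multiplying on the left by $e_i$, the cross terms $e_ib_\ell$ with $\ell\neq i$ vanish by orthogonality, $e_ib_i=b_i$, and $e_ij\in J$ because $J$ is an ideal; hence $e_iR(b)=b_i+e_ij$. Comparing this with $R(b)=e_iR(b)$ and invoking the uniqueness of the decomposition forces $b_\ell=0$ for all $\ell\neq i$, so that $R(b)\in B_i+J$. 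The statement for $L$ follows symmetrically: from $b=be_i$ and the second identity in \eqref{condition multiplier} one gets $L(b)=L(b)e_i$, and the same computation performed on the right yields $L(b)\in B_i+J$.

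The argument is essentially routine, and I do not expect a genuine obstacle once the correct idempotents are brought into play: the multiplier conditions convert left (respectively right) multiplication by $e_i$ into the projection onto the $B_i$-component modulo $J$. The only point requiring care is the bookkeeping of the cross terms, namely verifying that $e_ib_\ell=0$ for $\ell\neq i$ (which rests on $B_iB_\ell=0$ inside the semisimple subalgebra $B$) and that $e_ij\in J$ (which is precisely where $J$ being an ideal enters, in the same spirit as Proposition \ref{pro: multipliers and WM decomposition}).
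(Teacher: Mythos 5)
Your proof is correct. Note, however, that the paper does not actually write out a proof of this proposition: it is dispatched with ``Similarly, we can prove the following,'' the implicit template being the proof of Proposition \ref{pro: multipliers and WM decomposition}, and that template runs on a different mechanism than yours. There, one writes $R(j)=b+j'$, multiplies on the right by an \emph{arbitrary} $\bar b\in B$, and uses the third identity of \eqref{condition multiplier}, $R(j)\bar b=jL(\bar b)\in J$, to conclude $b\bar b=0$ for all $\bar b\in B$, whence $b=0$ by semisimplicity; the natural adaptation to the present statement takes $b\in B_i$, $\bar b\in B_\ell$ with $\ell\neq i$, notes $R(b)\bar b=bL(\bar b)\in B_iA\subseteq B_i+J$, and compares with $R(b)\bar b\equiv b_\ell\bar b$ modulo $J$ to force $b_\ell\bar b\in B_\ell\cap(B_i+J)=0$ for all $\bar b\in B_\ell$, hence $b_\ell=0$. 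You instead use the first and second identities of \eqref{condition multiplier} together with the unit $e_i$ of the simple component: $R(b)=R(e_ib)=e_iR(b)$ and $L(b)=L(be_i)=L(b)e_i$, so that multiplication by the idempotent $e_i$ acts as the projection onto $B_i+J$. Both arguments rest on the same structural facts (orthogonality $B_iB_\ell=0$ for $i\neq\ell$, $J$ being an ideal, directness of the Wedderburn--Malcev decomposition, and unitality of finite-dimensional simple algebras), but yours is slightly more direct: it replaces the annihilator-plus-semisimplicity step by a single multiplication by $e_i$ and the uniqueness of components. Either route yields a complete proof.
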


Notice that the Wedderburn-Malcev decomposition may not be $W$-invariant. In fact, in general $B$ in \eqref{W-M decomposition} is not $W$-subalgebra of $A.$ For example, if $J\neq 0$ and $W$ acts on $A$ as the subalgebra $J$ by left and right multiplication, then $W B + B  W \subseteq J$, that is, $B$ is not $W$-invariant. However, we have the following result.

\begin{theorem}\label{thm: WM decomposizion W-alg}
    Let $W$ be an algebra over a field $F$ of characteristic zero and $A$ be a finite dimensional $W$-algebra. Then the Jacobson radical $J$ of $A$ is a $W$-ideal of $A$. Moreover, if $F$ is algebraically closed, then $A=B_1\oplus \cdots\oplus B_k+J$, where $B_i$ is a simple algebra such that $WB_i, B_i  W \subseteq B_i+J$ for all $1\leq i \leq k.$
\end{theorem}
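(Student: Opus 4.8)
The plan is to exploit the duality established earlier in the paper: by the acting homomorphism $\Phi$ of \eqref{omomorphism from W to M(A)}, the left and right actions of $W$ on $A$ are realized through multipliers, namely $wa=\lambda(w)(a)=L^{(w)}(a)$ and $aw=\rho(w)(a)=R^{(w)}(a)$, where $\big(R^{(w)},L^{(w)}\big)=\Phi(w)\in\M(A)$. This reduces everything about $W$-invariance to statements about the action of multipliers, which have already been worked out in Propositions \ref{pro: multipliers and WM decomposition} and \ref{pro: simple algebras and multipliers}.

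First I would prove that $J$ is a $W$-ideal. Since $J$ is a two-sided ideal of $A$, it is an $\IM(A)$-submodule, so it remains only to control the outer action. For any $w\in W$ and $j\in J$, I write $wj=L^{(w)}(j)$ and $jw=R^{(w)}(j)$; since $\big(R^{(w)},L^{(w)}\big)\in\M(A)$, Proposition \ref{pro: multipliers and WM decomposition} gives $R^{(w)}(J),L^{(w)}(J)\subseteq J$ directly. Hence $WJ,JW\subseteq J$, so $J$ is $W$-invariant and therefore a $W$-ideal.

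For the second assertion, I would invoke the Wedderburn--Malcev decomposition \eqref{W-M decomposition}, $A=B+J$, and over an algebraically closed field refine the maximal semisimple part into a direct sum of simple components $B=B_1\oplus\cdots\oplus B_k$ (each $B_i$ being a matrix algebra $M_{n_i}(F)$ by Wedderburn). Applying Proposition \ref{pro: simple algebras and multipliers} with this decomposition, the image under any multiplier of $B_i$ lands in $B_i+J$; specializing to the multipliers $\big(R^{(w)},L^{(w)}\big)$ in the image of $\Phi$ yields $WB_i=\{L^{(w)}(b):w\in W,\,b\in B_i\}\subseteq B_i+J$ and likewise $B_iW\subseteq B_i+J$ for every $w\in W$. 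This is exactly the claimed $W$-invariance modulo the radical.

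The only subtle point is making sure Proposition \ref{pro: simple algebras and multipliers} applies to the decomposition I use, which requires the semisimple part to already be written as a direct sum of simple algebras; this is precisely what algebraic closedness of $F$ guarantees, since it forces each simple Wedderburn block to be a full matrix algebra and removes any division-algebra complications. I do not expect a genuine obstacle here: the heavy lifting has been done in the preceding multiplier propositions and in the duality between $\Phi$ and $\M(A)$, so the proof is essentially a matter of translating those results through the acting homomorphism. The main care is simply in phrasing, ensuring that the sets $WB_i$ and $B_iW$ are correctly identified with $\{L^{(w)}(b_i)\}$ and $\{R^{(w)}(b_i)\}$ respectively.
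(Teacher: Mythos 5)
Your proposal is correct and follows essentially the same route as the paper's own proof: translate the $W$-action through the acting homomorphism $\Phi$ into multipliers, apply Proposition \ref{pro: multipliers and WM decomposition} to get $W$-invariance of $J$, then refine $B$ into simple components via Wedderburn--Artin and apply Proposition \ref{pro: simple algebras and multipliers}. The only (harmless) cosmetic difference is your aside about full matrix algebras; Proposition \ref{pro: simple algebras and multipliers} needs only simplicity of the blocks, exactly as the paper uses it.
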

\begin{proof}
    Let $\Phi$ be the acting homomorphism of $W$ on $A$. Then $\Phi(w)=(\rho(w),\lambda(w))\in \M(A)$ for all $w\in W$. By Proposition \ref{pro: multipliers and WM decomposition}
 $\rho(w)(J), \lambda(w)(J)\subseteq J$ for all $w\in W$. Hence, by the definition of $\rho$ and $\lambda$, $jw, wj\in J$ for all $j\in J$ and $w\in W$. Thus, $J$ is a $W$-ideal of $A$.

 Now suppose that $F$ is algebraically closed of characteristic zero. Let $A=B+J$ as in \eqref{W-M decomposition}. Since $F$ is algebraically closed, by the Wedderburn-Artin Theorem (see \cite[Theorem 2.1.6]{Herstein1968}) we can write  $B=B_1 \oplus \cdots \oplus B_k,$ where $B_1, \ldots, B_k$ are simple algebras.
  Then
  $
  A=B_1 \oplus \cdots \oplus B_k+J.
  $
 As a consequence of Proposition \ref{pro: simple algebras and multipliers}, $\rho(w)(B_i), \lambda(w)(B_i)\subseteq B_i+J$ for all $w\in W.$ Thus, by the definition of $\rho$ and $\lambda,$ $bw, wb\in B_i+J$ for all $b\in B_i$ and $w\in W,$ and we are done.
\end{proof}

We say that a $W$-algebra $A$ is a \emph{$W$-simple} if $A^2\neq 0$ and $A$ has no non-zero $W$-ideals.

\begin{corollary}\label{cor: W-simple and simple algebras}
    Let 
    $A$ be a finite dimensional $W$-algebra over an algebraically closed field $F$ of characteristic zero.
    Then $A$ is $W$-simple if and only if it is simple.
\end{corollary}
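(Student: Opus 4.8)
The plan is to establish the two implications separately, the forward one being immediate and the converse requiring the Wedderburn--Malcev analysis of the previous section.

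For the direction ``simple $\Rightarrow$ $W$-simple'', I would simply note that every $W$-ideal is, by definition, in particular an ordinary two-sided ideal of $A$. Thus if $A$ is simple, its only ideals are $0$ and $A$, so a fortiori its only $W$-ideals are $0$ and $A$; together with $A^2\neq 0$ this is exactly the definition of $W$-simplicity.

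For the converse, assume $A$ is $W$-simple. The first step is to observe that $A^2$ is always a $W$-ideal: it is clearly an ideal of $A$, and the relations in \eqref{conditions W-algebra}, namely $w(a_1a_2)=(wa_1)a_2$ and $(a_1a_2)w=a_1(a_2w)$, give $WA^2\subseteq A^2$ and $A^2W\subseteq A^2$. Since $A^2\neq 0$ by hypothesis, $W$-simplicity forces $A^2=A$. Next I would invoke Theorem \ref{thm: WM decomposizion W-alg}, which guarantees that the Jacobson radical $J$ is a $W$-ideal; hence $J=0$ or $J=A$. The case $J=A$ is excluded because then $A$ would be nilpotent, while $A^2=A$ yields $A=A^2=A^3=\cdots$, contradicting $A\neq 0$. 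Therefore $J=0$ and $A$ is semisimple.

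Finally, since $F$ is algebraically closed, the second part of Theorem \ref{thm: WM decomposizion W-alg} (applied with $J=0$) writes $A=B_1\oplus\cdots\oplus B_k$ with each $B_i$ simple and $W$-invariant, i.e. $WB_i,B_iW\subseteq B_i$. Each $B_i$ is thus a nonzero $W$-ideal, hence equals $A$ by $W$-simplicity; the directness of the decomposition then forces $k=1$, so $A=B_1$ is simple. I expect the only delicate point to be the exclusion of the nilpotent case $J=A$: it is precisely here that the requirement $A^2\neq 0$ in the definitions of \emph{simple} and \emph{$W$-simple} does the work, and overlooking it would leave a genuine gap.
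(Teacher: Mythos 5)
Your proof is correct and follows essentially the same route as the paper: Theorem \ref{thm: WM decomposizion W-alg} makes $J$ a $W$-ideal, and over an algebraically closed field the decomposition $A=B_1\oplus\cdots\oplus B_k$ into simple, $W$-invariant summands forces $k=1$. You are in fact slightly more careful than the paper at one point: the paper passes from ``$J$ is a $W$-ideal'' directly to ``$J=0$'' without explicitly excluding $J=A$, whereas your observation that $A^2$ is a nonzero $W$-ideal, hence $A^2=A$, rules out the nilpotent case and closes that small step.
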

\begin{proof}
    First, assume that $A$ is simple. Then clearly $A$ is also $W$-simple. So, suppose that $A$ is $W$-simple. By Theorem \ref{thm: WM decomposizion W-alg} $J$ is a $W$-ideal of $A$, then $J=0$. 
    Moreover, since $F$ is algebraically closed, by the second part of Theorem \ref{thm: WM decomposizion W-alg}
  we have that $A=B_1 \oplus \cdots \oplus B_k,$ where $B_1, \ldots, B_k$ are simple algebras and 
    $WB_i, B_i W \subseteq B_i$ for all $1\leq i \leq k$. Since $B_1, \ldots, B_k$ are also ideals of $A$, it follows that $k=1$ and $A$ is simple, as required.
\end{proof}

Let $M_{k}(F)$ be the algebra of $k \times k$ matrices over $F$, for some $k \geq 1.$ We write $M_{k}^B$ for the $W$-algebra $M_k(F)$ with the $W$-action given by left and right multiplication by a subalgebra $B\subseteq M_k(F)$.

\begin{corollary}\label{cor: W-simple algebras}
     If $A$ is a finite dimensional $W$-simple algebra over an algebraically closed field $F$ of characteristic zero, then $A$  is isomorphic, as $W$-algebra, to $M_{k}^B$ for some $k\geq 1$ and some subalgebra $B\subseteq M_k(F)$ containing $1_{M_k(F)}$.
\end{corollary}
\begin{proof}
    By Corollary \ref{cor: W-simple and simple algebras} it follows that $A$ is a simple algebra, and by Wedderburn Theorem (see \cite[Theorem 1.4.4]{Herstein1968}) it is isomorphic to $M_{k}(F)$ for some $k\geq 1$. Moreover, since $M_{k}(F)$ has unity, by Proposition \ref{Wazioni per algebre con unit} we are done.
\end{proof}

\section{Free $W$-algebra and generalized identities}\label{sezione4}


    Let from now on $W$ be a unital $F$-algebra.
    Recall that a homomorphisms $\varphi:A\rightarrow B$ between $W$-algebras $A,B$ must satisfy $\varphi(wav)=w\varphi(a)v$ for $a\in A$, $w,v\in W$.

    Since the class of $W$-algebras is a non-trivial variety in the sense of universal algebra, it contains the free $W$-algebra $\W$ freely generated by the countably infinite set of variables $X:=\{x_1,x_2, \ldots\}$, i.e., $\W$ is uniquely determined up to an isomorphism by the following universal property: given a $W$-algebra $A$, any map $\varphi: X\rightarrow A$ can be uniquely extended to a homomorphism of $W$-algebras $\Bar{\varphi}:\W\rightarrow A$, which we call the \emph{evaluation} of $\W$ at elements $\varphi(x_1),\varphi(x_2),\ldots$ from $A$. Notice that the evaluation $\Bar{\varphi}$ of $\W$ in $A$ depends not only on the map $\varphi$ but also on the right and left actions of $W$ on $A,$ and consequently on the corresponding right and left representations of $W$ on $A.$ Hence, $\Bar{\varphi}$ depends on the acting homomorphism $\Phi$ of $W$ on $A.$ Since $A$ can be a $W$-algebra with respect to different $W$-actions, we will denote the evaluation $\Bar{\varphi}$ by $\Bar{\varphi}_{\Phi}$, or simply $\varphi_{\Phi}$,  when necessary to clarify which action we are considering.

	We can describe $\W$ as follows: $\W$ is generated as an $F$-algebra by the set $\{v x_i w \ | \ i\geq 1, v,w\in W\}$ subject to the relations  
    \begin{equation*}
    \begin{split}
&1_W x_i 1_W = x_i, \ (v_1 + v_2)x_i w = v_1 x_i w + v_2 x_i w, \ v x_i (w_1+w_2)=v x_i w_1 + v x_i w_2, \ v(x_i+x_j)w = vx_i w + v x_j w
    \end{split}
    \end{equation*}
    for all $v,v_1,v_2,w,w_1,w_2\in W$ and $i,j\geq 1$, and the product given first by juxtaposition and then by multiplication in $W$.  Clearly, $\W$ has a natural structure of $W$-algebra by using the multiplication of $W$.
    
    Moreover, 
    given a basis $\mathcal{B}_W:=\{w_i\}_{i\in\mathcal I}$ of $W$,
    then a basis of $\W$ is the following
	$$
	\mathcal{B}_{\W}:=\left\lbrace w_{i_0}x_{j_1}w_{i_1}x_{j_2}\cdots w_{i_{n-1}}x_{j_n}w_{i_{n}} \mid j_1,\dots,j_n\geq 1 ,\, w_{i_0},\ldots,w_{i_{n}}\in\mathcal{B}_W, \, n\geq 1\right\rbrace.
	$$
Indeed, if $\mathcal{B}_W$ contains $1_W$, then it is clear that between any two variables always appears an element of the basis. Otherwise, $1_W$ can be written as a linear combination of some elements $w_{i_1},\ldots, w_{i_s}\in \mathcal{B}_W$, namely  $1_W= \alpha_1 w_{i_1}+\cdots + \alpha_s w_{i_s}.$ Thus, any monomial containing two or more adjacent variables can be expressed as a linear combination of monomials in which an element of $\mathcal B_W$ occurs between each pair of consecutive variables. For instance,
    $$
    w_jx_1x_2w_k = w_jx_1 1_W x_2w_k = w_jx_1(\alpha_1 w_{i_1}+\cdots + \alpha_s w_{i_s})x_2w_k = \alpha_1 w_jx_1w_{i_1}x_2w_k+\cdots + \alpha_s w_jx_1w_{i_s}x_2w_k.
    $$

    	The elements of $\W$ are called \emph{$W$-polynomials} or \emph{generalized polynomials} when there is no ambiguity about the role of $W$.
    A \emph{$T_W$-ideal} of the free $W$-algebra is a $W$-ideal which in addition is invariant under all $W$-algebra endomorphisms of $\W$, i.e., under the endomorphisms, called \emph{substitutions}, sending variables $x_i\in X$ to elements of $\W$.
     Moreover, given a set $S\subseteq\W$, a $W$- polynomial $f\in \W$ is said to be a \emph{consequence} of the $W$-polynomials in $S$ if $f$ belongs to the $T_W$-ideal generated by $S$. 

Note that the algebra $\W$, as defined here, is not the free product (i.e., the coproduct in the category of associative  $F$-algebras) of $W$ and the classical free $F$-algebra $F\langle X \rangle$, as considered in the classical theory of generalized polynomial identities (see, for instance, \cite[Chapter 6]{BeidarMartindaleMikhalevm1996}).
More precisely, the free product of $W$ and $F\langle X \rangle$ may be viewed as the free $W$-algebra with constant term, namely the semidirect product $W \ltimes \W$. A fundamental difference between $\W$ and the free product of $W$ and $F\langle X \rangle$ is that $W$ is not embedded in $\W$, while it is canonically embedded in the free product. 
This distinction stems from the greater generality of our framework:
our notion of $W$-algebras extends the concept of (not necessarily unital) $F$-algebras, whereas the classical theory of generalized identities deals with algebras that arise as a natural generalization of unital $F$-algebras.

\smallskip 

Given a $W$-algebra $A$, a generalized polynomial $f(x_1, \dots , x_n)\in \W$ is a \emph{$W$-identity} of $A$, or \emph{generalized identity} when the role of $W$ is clear, and we write $f\equiv 0$, if $f(a_{1},\dots,a_{n})=0$ for any $a_1,\ldots,a_n\in A$. 
We denote by $\I^W(A)$ the set of generalized identities of $A$, which is a $T_W$-ideal of the free $W$-algebra $\W$. Note that $\I^W(A)$ is the intersection of all kernels of evaluations of $\W$ from $A$.

\smallskip

For $n\geq 1$, we denote by $P_n^W$ the vector space of \emph{multilinear $W$-polynomials} in the variables $x_{1},\dots,x_{n}$, that is
    $$
	P_n^W:=\spa_F\{w_{i_0}x_{\sigma(1)}w_{i_1} x_{\sigma(2)}\cdots w_{i_{n-1}}x_{\sigma(n)}w_{i_{n}} \mid \sigma\in S_{n} ,w_{i_0},\ldots,w_{i_{n}}\in \mathcal{B}_W \},
	$$
	where $S_n$ denotes the symmetric group acting on $\{1,\ldots,n\}$.
	Since $F$ has characteristic zero, a standard argument shows that the $T_W$-ideal $\I^W(A)$ is completely determined by its multilinear generalized polynomials. 
	Thus it is reasonable to consider the quotient space
	$$
	P_n^W(A):= \dfrac{P_n^W}{P_n^W \cap \I^W(A)},
	$$
    and so one can define the $n$th \emph{$W$-codimension}, or \emph{generalized codimension}, of $A$ as
    $$
    c_n^W(A):=\dim_F P_n^W(A), \quad n\geq 1.
    $$
 Note that $c_n^W(A)$ is not necessarily finite, indeed we will show an example of infinite generalized codimensions in the last section. However, if at least one between $W$ and $A$ is finite dimensional, then $c_n^W(A)$ is finite for all $n \geq 1$, and it is interesting to ask if the limit 
$$
\exp^W(A):=\lim_{n\to \infty} \sqrt[n]{ c_n^W(A)}
$$
exists. 
 In case $W=F$, we are dealing with ordinary polynomial identities, and it is well known that the limit $\exp(A):=\exp^W(A)$ exists and is a nonnegative integer, called (ordinary) \emph{exponent} of $A$ (see \cite{GiambrunoZaicev1998, GiambrunoZaicev1999}).  In particular, if $A$ is a finite dimensional $F$-algebra, then
  \begin{equation*}
      \exp(A)=\max\{ \dim_F\big( B_{i_1} \oplus B_{i_2}\oplus \cdots \oplus B_{i_r} \big) \mid B_{i_1}J B_{i_2} J \cdots J B_{i_r}\neq 0,  \ 1\leq r \leq k, \ i_p \neq i_s,\ 1\leq p,s\leq r\},
  \end{equation*}
where $A = B_1 \oplus \cdots \oplus B_k + J$ with $B_1, \ldots, B_k$ simple algebras and $J = J(A)$ is the Jacobson radical of $A$.

  In \cite{Gordienko2010} Gordienko captured the exponential growth of the generalized codimension sequence of a finite dimensional unital algebra  $A$ that acts on itself by left and right multiplication. More precisely, he proved that if $A$ is a finite dimensional unital $A$-algebra, then $\exp^A(A)=\exp(A)$. 
Using the relationship between $W$-algebras and multiplier algebras, we can generalize this result as follows.

\begin{theorem}\label{thm: exponent}
   Let  $W$ be a unital algebra over a field $F$ of characteristic zero. If $A$ is a finite dimensional unital $W$-algebra,  then $\exp^W(A)$ exists and $\exp^W(A)=\exp(A)$.
\end{theorem}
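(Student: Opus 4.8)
The plan is to reduce the statement to the ordinary case by using the structural results already established for unital $W$-algebras. The crucial observation is Proposition~\ref{Wazioni per algebre con unit}: since $A$ is a unital $W$-algebra and $W$ is unital, the action of $W$ on $A$ is equivalent to the action of a suitable unital subalgebra $B\subseteq A$ (with $1_A\in B$) by left and right multiplication. Thus $\overline{W}=\Phi(W)\cong B$, and by Corollary~\ref{mutiplier per algebre con unit} we have $A\cong\IM(A)=\M(A)$, with all multipliers being inner. The generalized identities of $A$ as a $W$-algebra therefore coincide with its generalized identities as a $B$-algebra, where $B$ acts by multiplication inside $A$; in particular $c_n^W(A)=c_n^B(A)$ for all $n$, so it suffices to prove the theorem when $W=B\subseteq A$ acts by left and right multiplication.

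Next I would set up a comparison between the generalized codimensions $c_n^W(A)$ and the ordinary codimensions $c_n(A)$. The key point is that every generalized multilinear polynomial $w_{i_0}x_{\sigma(1)}w_{i_1}\cdots w_{i_{n-1}}x_{\sigma(n)}w_{i_n}$ evaluates, after substituting $x_j\mapsto a_j\in A$, to a product in which the coefficients $w_{i_k}$ are replaced by fixed elements $b_{i_k}\in B\subseteq A$. Since $B\subseteq A$, each such generalized monomial is, up to the fixed constants, an ordinary product of elements of $A$. This gives the lower bound $c_n(A)\le c_n^W(A)$ immediately, since ordinary multilinear polynomials embed into $P_n^W$ (take all $w_{i_k}=1_W$, whose image is $1_A$). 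For the reverse inequality, the idea is that introducing coefficients from the finite-dimensional $B$ between the variables can only multiply the dimension count by a factor bounded independently of the exponential base: concretely, one shows $c_n^W(A)\le (\dim_F B)^{\,n+1}\,c_n(A)$, since any generalized multilinear monomial is determined by the ordinary multilinear monomial structure together with a choice of $n+1$ basis coefficients from $\mathcal{B}_W$, and modulo the identities these reduce to ordinary products that already live in $P_n(A)$.

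Taking $n$th roots in the sandwich $c_n(A)\le c_n^W(A)\le (\dim_F B)^{n+1}c_n(A)$ and letting $n\to\infty$, the polynomial-in-$n$ (really, exponential with fixed base $\dim_F B$ but constant exponent growth) prefactor contributes $\lim_{n\to\infty}(\dim_F B)^{(n+1)/n}=\dim_F B$, which does not immediately vanish. Hence the naive bound is too lossy, and this is where the main obstacle lies: I must produce a bound of the form $c_n^W(A)\le p(n)\,c_n(A)$ with $p(n)$ only polynomially growing, not exponentially. The natural route is to argue that modulo $\I^W(A)$ the coefficients $w_{i_k}=b_{i_k}\in B\subseteq A$ can be absorbed into the variable slots: a monomial $b_{i_0}x_{\sigma(1)}b_{i_1}\cdots x_{\sigma(n)}b_{i_n}$ is an ordinary evaluation, so the span of all generalized monomials modulo identities is contained in the span of ordinary products of elements of $A$ in the variables $x_1,\dots,x_n$ together with fixed elements of $B$; since $B$ is finite dimensional and $A$ is finite dimensional, a Regev-type or representation-theoretic argument on the $S_n$-module structure of $P_n^W(A)$ bounds its dimension by $c_n(A)$ times a polynomial factor. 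Concretely, I expect to invoke the finite dimensionality of $A$ to bound the number of linearly independent evaluations, realizing $P_n^W(A)$ inside $\operatorname{Hom}$ spaces of multilinear maps $A^n\to A$ whose dimension is controlled the same way as for $c_n(A)$; the exponential base is governed solely by the Wedderburn--Malcev data of $A$ via the formula for $\exp(A)$ recalled above, and the coefficients from $B\subseteq A$ cannot enlarge that base. Once the prefactor is shown to be polynomial, the squeeze yields $\exp^W(A)=\exp(A)$, and in particular the limit exists and equals the integer $\exp(A)$.
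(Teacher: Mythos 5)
Your opening reduction is exactly the paper's: by Proposition~\ref{Wazioni per algebre con unit} the $W$-action is the left/right multiplication action of a unital subalgebra $B\subseteq A$, so $c_n^W(A)=c_n^B(A)$, and the inclusion $P_n\subseteq P_n^W$ together with $P_n\cap\I^W(A)=P_n\cap\I(A)$ gives the lower bound $c_n(A)\le c_n^W(A)$. Up to this point the proposal is sound and coincides with the intended argument.

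The gap is the upper bound, and it is not a technicality but the entire substance of the theorem. You correctly recognize that you need an estimate of the form $c_n^W(A)\le p(n)\,c_n(A)$ (equivalently $c_n^W(A)\le C n^r \exp(A)^n$), but neither of the routes you gesture at produces it. Embedding $P_n^W(A)$ into the space of multilinear maps gives only $c_n^W(A)\le\dim_F \operatorname{Hom}_F(A^{\otimes n},A)=(\dim_F A)^{n+1}$, and $\dim_F A$ is in general strictly larger than $\exp(A)$ (for $UT_2$: dimension $3$, exponent $2$), so that Hom space is \emph{not} ``controlled the same way as $c_n(A)$''; and the statement that ``coefficients from $B\subseteq A$ cannot enlarge the base'' is precisely the assertion to be proven, not an argument for it. Note also that generalized evaluations genuinely escape the span of ordinary ones — already $\dim_F P_1^W(M_k(F))=k^4$ while $\dim_F P_1(M_k(F))=1$ — so no absorption of coefficients into variable slots can work monomial by monomial; any proof must control the whole $S_n$-module $P_n^W(A)$. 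This missing bound is exactly the content of Theorem 3 of Gordienko \cite{Gordienko2010}, which the paper, after the same reduction to a subalgebra $B$ acting by multiplication, invokes and follows step by step: the proof there runs through the representation-theoretic machinery (polynomial bounds on the multiplicities of the generalized cocharacter, and a Wedderburn--Malcev-based bound showing that partitions stretching beyond a strip of width $\exp(A)$ correspond to generalized identities). Without either reproducing that analysis or citing it, your proposal stops at the point where the real work begins.
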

\begin{proof}
    Since $A$ is a unital algebra, by Theorem \ref{Wazioni per algebre con unit} it follows that the action of $W$ on $A$ is equivalent to the action of a suitable unital subalgebra $B$ of $A$ by left and right multiplication. Then, since $A$ is finite dimensional, following step by step the proof of Theorem 3 in \cite{Gordienko2010} we get the desired conclusion.
\end{proof}

\begin{corollary}
    If $A$ is a finite dimensional unital $W$-algebra, then the sequence of generalized codimensions $c^W_n(A)$, $n\geq 1,$ grows exponentially or is polynomially bounded.
\end{corollary}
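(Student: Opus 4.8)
The plan is to deduce the dichotomy from Theorem \ref{thm: exponent} together with the integrality of the ordinary exponent. By that theorem $\exp^W(A)=\exp(A)$ exists and is a nonnegative integer, so it suffices to treat the cases $\exp^W(A)\ge 2$ and $\exp^W(A)\le 1$ separately, showing that the former forces exponential growth while the latter forces polynomial boundedness.

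The case $\exp^W(A)\ge 2$ is immediate. Writing $d:=\exp^W(A)=\lim_{n\to\infty}\sqrt[n]{c_n^W(A)}\ge 2$ and fixing any $\varepsilon$ with $0<\varepsilon<d-1$, one has $c_n^W(A)\ge (d-\varepsilon)^n$ for all large $n$, with $d-\varepsilon>1$, so $c_n^W(A)$ grows exponentially. Moreover, since $A$ is unital and nonzero, $x_1\cdots x_n\notin\I^W(A)$, whence $c_n^W(A)\ge 1$ and $\exp^W(A)\ge 1$; thus the only remaining case is $\exp^W(A)=\exp(A)=1$, where I must establish polynomial boundedness.

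For this I would first reduce to ordinary codimensions. By Proposition \ref{Wazioni per algebre con unit} the $W$-action on $A$ is equivalent to the action of a unital subalgebra $B\subseteq A$ with $1_A\in B$ by left and right multiplications; since the evaluations factor through $\overline{W}\cong B$, we have $c_n^W(A)=c_n^{B}(A)$. Including ordinary multilinear polynomials into generalized ones by taking all coefficients equal to $1_A$ gives $c_n(A)\le c_n^W(A)$, consistent with the exponential case. The real content is the reverse, namely a polynomial \emph{upper} bound on $c_n^W(A)$ when $\exp^W(A)=1$, and this is where I expect the main difficulty to lie.

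The obstacle is that the naive coefficient count $c_n^W(A)\le(\dim B)^{n+1}c_n(A)$ carries an exponential factor $(\dim B)^{n+1}$ and is useless here: already for $A=F[t]/(t^2)$ acting on itself the sequence $c_n^W(A)$ stays bounded while this estimate is exponential, so ruling out intermediate growth demands a finer analysis. The natural tool is the $S_n$-representation theory of the generalized cocharacter developed in \cite{Gordienko2010}, which also underlies Theorem \ref{thm: exponent}. Following that cocharacter computation step by step, I would show that when $\exp^W(A)\le 1$ the multiplicities in the $n$th generalized cocharacter are supported on partitions $\lambda$ with at most $q$ boxes outside the first row, for a constant $q$ independent of $n$; the number of such $\lambda$ is bounded and each associated degree $d_\lambda$ is polynomial in $n$, so their weighted sum, namely $c_n^W(A)$, is polynomially bounded. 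Equivalently, one may compare $c_n^W(A)$ with the ordinary codimensions of a finite-dimensional companion algebra built from $A$ and $B$ of the same exponent and invoke the classical dichotomy for ordinary codimensions of finite-dimensional algebras (see \cite{GZbook}), by which such a sequence is polynomially bounded exactly when the exponent is at most $1$.
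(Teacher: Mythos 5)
Your proposal is correct, and at bottom it runs on the same engine as the paper: Theorem \ref{thm: exponent}, whose proof is obtained by following Gordienko's Theorem 3 in \cite{Gordienko2010}. The paper states this corollary without proof, as if it were immediate from the statement of Theorem \ref{thm: exponent}; your write-up makes explicit the one point that is genuinely not immediate, namely that existence and integrality of the limit alone do not exclude intermediate growth when $\exp^W(A)=1$ (a sequence like $2^{\sqrt{n}}$ also has $n$th-root limit $1$), so a polynomial \emph{upper} bound is required in that case. Your resolution --- go back inside Gordienko's argument --- is the intended one, but it can be done more directly than you propose: Gordienko's proof (and hence the proof of Theorem \ref{thm: exponent}) establishes two-sided estimates of the form $C_1 n^{r_1}\exp(A)^n \le c_n^W(A) \le C_2 n^{r_2}\exp(A)^n$, and the upper bound with $\exp(A)\le 1$ yields polynomial boundedness at once, with no need to rerun the cocharacter analysis. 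If you do take the cocharacter route, note a small omission in your sketch: confining the partitions to a strip of bounded depth and bounding each $d_\lambda$ polynomially is not enough; you must also show that the multiplicities $m_\lambda$ (or at least their sum) are polynomially bounded, since $c_n^W(A)=\sum_\lambda m_\lambda d_\lambda$. This bound is again part of the same machinery, so it is an incompleteness of exposition rather than a wrong turn; your treatment of the case $\exp^W(A)\ge 2$ and the observation that unitality forces $\exp^W(A)\ge 1$ are both fine.
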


    A variety of $W$-algebras generated by a $W$-algebra $A$ is denoted by $\V^W(A)$ and is called \emph{$W$-variety}, or \emph{generalized variety}, and $\I^W(\mathcal{V}):=\I^W(A)$. The \emph{growth} of $\mathcal{V}= \V^W(A)$ is the growth of the sequence $c_{n}^W(\mathcal{V}):=c_{n}^W(A)$, $n\geq 1$. We say that the generalized variety $\mathcal{V}$ has \emph{polynomial growth} if $c_{n}^W(\mathcal{V})$ is polynomially bounded and $\mathcal{V}$ has \emph{almost polynomial growth} if  $c_{n}^W(\mathcal{V})$ is not polynomially bounded but every proper $W$-subvariety $\mathcal{U}$ of $\mathcal{V}$ has polynomial growth.

\smallskip

Let $A$ be a $W$-algebra such that $W$ acts on $A$ as a finite dimensional algebra. Then if we consider the acting homomorphism $\Phi: W \to \M(A)$ of $W$ on $A,$ 
we have that $\overline{W}= \Phi(W)$ is finite dimensional. Thus, since $F$ is of characteristic zero
by the Wedderburn-Malcev Theorem we can decompose 
$$\overline{W}=\overline{W}_{ss}+\Bar{J},$$ 
where $\overline{W}_{ss}$ is a semisimple subalgebra of $\overline{W}$ (isomorphic to $\overline{W}/\Bar{J}$) and $\Bar{J}=J(\overline{W})$ is the Jacobson radical of $\overline{W}$.

Now, denotes by $\pi:\overline{W} \rightarrow \overline{W}/\Bar{J}$ the canonical epimorphism. Since $\overline{W}_{ss}$ isomorphic to $\overline{W}/\Bar{J}$, we can assume that $\pi:\overline{W}\rightarrow \overline{W}_{ss}.$
Hence, if one defines the map $\Phi_{ss}: W \to \M(A)$ 
such that $\Phi_{ss}=\pi\Phi ,$ then $\Phi_{ss}$ is a homomorphism of $F$-algebra and determines another $W$-algebra structure on $A,$ which means that $\Phi_{ss}$ is another acting homomorphism of $W$ on $A.$ 
Thus, we denote by  $A^{\overline{W}_{ss}}$ the $W$-algebra $A$ under this new action, where $W$ acts on $A$ as the algebra $\overline{W}_{ss}=\Phi_{ss}(W).$ Using this notation, we have the following result.

\begin{theorem}\label{thm: semisimple action and algebra with 1}
    Let $W$ be a unital algebra over a field $F$ of characteristic zero and $A$ be a finite-dimensional unital 
    $W$-algebra.
     If $\Bar{J}\subseteq \IM(J(A))$ where $\Bar{J}=J(\overline{W})$ and $J(A)$ are the Jacobson radicals of $\overline{W}=\Phi(W)$ and $A$, respectively, then $A^{\overline{W}_{ss}}\in \V^{W}(A).$
\end{theorem}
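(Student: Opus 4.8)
The plan is to unwind the definitions and reduce the statement to an inclusion of $T_W$-ideals. By definition $A_{\overline{W}_{ss}}\in\V^W(A)$ means $\I^W(A)\subseteq\I^W(A_{\overline{W}_{ss}})$, and since $\cha F=0$ it suffices to test this on multilinear polynomials: I must show that every $f\in P_n^W\cap\I^W(A)$ vanishes under all evaluations in $A$ carried out with the action $\Phi_{ss}$. First I would make both actions explicit. Since $A$ is unital, Corollary \ref{mutiplier per algebre con unit} gives $\M(A)=\IM(A)\cong A$, so by Proposition \ref{Wazioni per algebre con unit} the original action is multiplication by a unital subalgebra $B\cong\overline{W}\subseteq A$, with $w\mapsto b_w$ the induced epimorphism $W\twoheadrightarrow B$ and $\Phi(w)=(R_{b_w},L_{b_w})$. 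Decomposing $b_w=\beta_w+\gamma_w$ with $\beta_w$ in the semisimple part (corresponding to $\overline{W}_{ss}$) and $\gamma_w\in J(B)$, the semisimplified action $\Phi_{ss}$ is multiplication by $\beta_w=b_w-\gamma_w$. Under the identification $\M(A)\cong A$, the hypothesis $\Bar{J}\subseteq\IM(J(A))$ translates \emph{exactly} into $J(B)\subseteq J(A)$; thus every correction $\gamma_w$ lies in the nilpotent ideal $J(A)$, which is the property I will exploit throughout.

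With this notation, evaluating a multilinear monomial at $x_i\mapsto a_i$ under $\Phi_{ss}$ produces the same word as under $\Phi$, but with each coefficient $b_{i_k}$ replaced by $\beta_{i_k}=b_{i_k}-\gamma_{i_k}$. Expanding each factor yields
$$
f_{\Phi_{ss}}(a_1,\dots,a_n)=\sum_{T}(-1)^{|T|}\,g_T(a_1,\dots,a_n),
$$
where $T$ runs over the subsets of the coefficient positions, $g_\emptyset=f_\Phi$, and $g_T$ is the evaluation in which the coefficients indexed by $T$ are replaced by the radical elements $\gamma$. The term $g_\emptyset=f_\Phi(a)$ vanishes because $f\in\I^W(A)$; the entire difficulty lies in the terms with $T\neq\emptyset$, where radical coefficients $\gamma\in J(A)$ are inserted between the variables.

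To control these I would introduce the one-parameter deformation $\Phi_s(w):=\beta_w+s\gamma_w$, $s\in F$, satisfying $\Phi_1=\Phi$ and $\Phi_0=\Phi_{ss}$. A direct computation shows $\Phi_s(w)\Phi_s(w')-\Phi_s(ww')=(s^2-s)\gamma_w\gamma_{w'}$, so the $\Phi_s$ are genuine $W$-algebra structures on $A$ precisely when $\gamma_w\gamma_{w'}=0$, i.e.\ when $J(B)^2=0$. Granting the key lemma that $A_s:=(A,\Phi_s)\in\V^W(A)$ for all but finitely many $s$, the argument closes by a separation of variables: for fixed $a_1,\dots,a_n$ the element $p(s):=f_{\Phi_s}(a_1,\dots,a_n)=\sum_d H_d(a)\,s^{d}$ is a polynomial in $s$ with coefficients $H_d(a)\in A$, which vanishes at infinitely many $s$ (where $f\in\I^W(A)\subseteq\I^W(A_s)$), hence identically; evaluating at $s=0$ forces the constant term $H_0(a)=f_{\Phi_{ss}}(a)$ to be zero. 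In the general nilpotent case $J(B)^2\neq0$ and $\Phi_s$ fails to be multiplicative, so I would reduce to the square-zero situation by passing to the associated graded of $A$ for the $J(A)$-adic filtration, where the coefficient symbols multiply correctly and the degree-rescaling is a genuine automorphism, and then transfer membership back through the Rees construction adapted to the $W$-setting.

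The main obstacle is exactly the key lemma, namely showing that the deformed structures $A_s$ lie in $\V^W(A)$; equivalently, overcoming the cross-term obstruction $(s^2-s)\gamma_w\gamma_{w'}$, which prevents any naive scalar interpolation (or commutative scalar extension, where the same factor $s^2-s$ reappears and forces a trivial product decomposition) between $\Phi$ and $\Phi_{ss}$ from being an algebra homomorphism. I expect this to be the heart of the proof: in the square-zero layer the map $\theta_s\colon b_w\mapsto\beta_w+s\gamma_w$ is an algebra automorphism of $B$ for $s\neq0$, and one must promote it to an actual witness of membership — most robustly by realizing $A_s$ as a homomorphic image of a $W$-subalgebra of a finite power of $A$ built from the filtration by powers of $J(A)$. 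It is precisely here that $\Bar{J}\subseteq\IM(J(A))$ is indispensable: it places every $\gamma_w$ inside the nilpotent radical $J(A)$, so that the $J(A)$-adic filtration is compatible with the deformation and the reduction to the square-zero case terminates after finitely many steps.
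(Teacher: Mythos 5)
Your setup is sound and in fact parallels the paper's: reducing to multilinear $f\in \I^W(A)$, translating the hypothesis $\Bar{J}\subseteq \IM(J(A))$ into $J(B)\subseteq J(A)$ via Proposition \ref{Wazioni per algebre con unit}, and expanding the $\Phi_{ss}$-evaluation into a sum of terms in which some coefficients are replaced by radical elements $\gamma_w\in J(A)$ (your $\sum_T(-1)^{|T|}g_T$ is, after choosing a basis of $W$ adapted to $\overline{W}_{ss}+\Bar{J}$, the same splitting the paper uses). But the proof stops exactly where it needs to start: everything rests on the unproved ``key lemma'' that the deformed structures $A_s=(A,\Phi_s)$ lie in $\V^W(A)$, and as stated this lemma cannot even be formulated, since by your own computation $\Phi_s(w)\Phi_s(w')-\Phi_s(ww')=(s^2-s)\gamma_w\gamma_{w'}$, so for $s\neq 0,1$ the map $\Phi_s$ is not an algebra homomorphism into $\M(A)$ unless $J(B)^2=0$: the interpolation family you want to evaluate $f$ on does not exist. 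Even in the square-zero case no witness of membership is produced: $\theta_s$ being an automorphism of $B$ does not yield a $W$-algebra homomorphism relating $A_s$ to $A$, because such a map $\psi$ must satisfy $\psi(\theta_s(b_w)a)=b_w\psi(a)$, and nothing forces $\theta_s$ to extend to $A$. The fallback --- pass to the associated graded of the $J(A)$-adic filtration and ``transfer membership back through the Rees construction'' --- is a direction, not an argument: $\mathrm{gr}(A)$ is a different algebra, generalized identities are not preserved when passing to it, and no mechanism is given for pulling a membership statement about structures on $\mathrm{gr}(A)$ back to one about $A$.

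For comparison, the paper closes this gap with a short nilpotency bootstrap that needs no deformation at all. Write $f=f_{ss}+f_J$, where $f_{ss}$ collects the monomials whose coefficients all map into $\overline{W}_{ss}$ and $f_J$ those containing at least one coefficient mapping into $\Bar{J}$. For every evaluation $\varphi$ one has $\varphi_{\Phi_{ss}}(f)=\varphi_{\Phi}(f_{ss})$, and since $f\in\I^W(A)$, also $\varphi_{\Phi}(f_{ss})=-\varphi_{\Phi}(f_J)$. Every monomial of $f_J$ carries a factor from $J(A)$, so $\varphi_{\Phi}(f_J)\in J(A)$; moreover, if $\varphi_{\Phi}(f_{ss})\in J(A)^q$ then $\varphi_{\Phi}(f_J)\in J(A)^{q+1}$. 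The equality $\varphi_{\Phi}(f_{ss})=-\varphi_{\Phi}(f_J)$ therefore pushes both elements into ever higher powers of $J(A)$, and since $J(A)$ is nilpotent ($A$ is finite dimensional), both must vanish; hence $\varphi_{\Phi_{ss}}(f)=\varphi_{\Phi}(f_{ss})=0$ and $f\in\I^W(A_{\overline{W}_{ss}})$. This is precisely the ingredient your proposal is missing: the vanishing of your $T\neq\emptyset$ terms comes from this interplay between the identity $f\equiv 0$ on $A$ and the nilpotent filtration, not from a one-parameter family of actions that in general does not exist.
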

\begin{proof}
 If $\Bar J=0$, there is nothing to prove, so we may assume that $\Bar J\neq 0$.

     Let $f\in \I^W(A)$  with $\deg f =n$ and assume, as we may, that $f$ is a multilinear. Decompose $f$ as $f=f_{{ss}}+f_{J}$, where $f_{{ss}}$ is a multilinear $W$-polynomial in which in each monomial appears only elements of $W$ that are sent through $\Phi$ into $\overline{W}_{ss},$ and $f_{J}$ is a multilinear $W$-polynomial in which in each monomial appears at least an element of $W$ that is sent through $\Phi$ into $\Bar{J}.$
     As a consequence, for all maps $\varphi:X \rightarrow A,$ we have that
       \begin{equation}\label{relation phi_W and phi_Wss}
        \varphi_{\Phi_{ss}}(f)=\varphi_{\Phi_{ss}}(f_{ss})= \varphi_{\Phi}(f_{ss}),
    \end{equation}
   and, since $f$ is a $W$-identity of $A$,
\begin{equation}\label{eq: W-evaluation fss and fJ}
       \varphi_{\Phi}(f_{ss})=-\varphi_\Phi(f_{J}).
    \end{equation}

Since $\Bar{J}\neq 0$, we may suppose that there exist a map $\varphi:X \rightarrow A$ such that $\varphi_\Phi(f_{J})\neq 0$. 
Since $A$ is a unital algebra, by Proposition \ref{Wazioni per algebre con unit} it follows that $\overline{W}=\overline{W}_{ss}+\Bar{J}$ is isomorphic to a subalgebra $B$ of $A$ that acts on $A$ by left and right multiplication. Since $\Bar{J}\subseteq \IM(J(A))$, the Jacobson radical $J(B)$ of $B$ is contained in $J(A)$. 
Thus we have that $\varphi_\Phi(f_{J})\in J(A)$, so by \eqref{eq: W-evaluation fss and fJ} also $\varphi_{\Phi}(f_{ss})\in J(A)$. Now notice that if $\varphi_{\Phi}(f_{ss})\in J(A)^q$ for some $q\geq 1$, then  by the definition of $f_{ss}$ and $f_J$ we have $\varphi_{\Phi}(f_{J})\in J(A)^{q+1}$. Thus, if $q$ is the biggest integer such that $\varphi_{\Phi}(f_{ss})\in J(A)^q$ and $\varphi_{\Phi}(f_{ss})\notin J(A)^{q+1}$, from \eqref{eq: W-evaluation fss and fJ} it follows that $ \varphi_{\Phi}(f_{ss})=-\varphi_{\Phi}(f_{J})= 0,$ a contradiction. Therefore we have that $\varphi_\Phi(f_{J})=0$ for all map $\varphi:X \rightarrow A.$ By \eqref{eq: W-evaluation fss and fJ} and \eqref{relation phi_W and phi_Wss}  we deduce that $ \varphi_{\Phi_{ss}}(f)=0$ for all $\varphi:X \rightarrow A.$ Thus, $f\in \I^W (A^{\overline{W}_{ss}})$, as required.
\end{proof}

\section{$W$-identities of $M_k(F)$}

The problem of finding generators for the \( T \)-ideal of identities of the matrix algebra \( M_k(F) \), for every \( k \geq 3 \), is one of the main open problems—if not the most important one—in the theory of algebras with polynomial identities. It is worth recalling that, for \( k = 2 \), a basis for the \( T \)-ideal was found in \cite{Drensky1981, Koshlukov2001, Razmyslov1973}, while nothing is known for \( k \geq 3 \). From this point of view, until now the only significant exceptions have been given by the \( T \)-ideal of trace identities of \( M_k(F) \), whose generator, the Cayley-Hamilton polynomial, was determined in \cite{Procesi1976, Razmyslov1974}, by the ideal of the differential identities of $M_k(F)$ with the action of its full Lie algebra of derivations (see \cite{BroxRizzo2024}),
and by the $T_W$-ideal of the $W$-algebra $M_k(F)$ when \( W \) acts as the full algebra \( M_k(F) \) by left and right multiplication, which was computed in \cite[Proposition 6.5.5]{BeidarMartindaleMikhalevm1996}.

This section has a twofold purpose. On the one hand, we re-establish Proposition 6.5.5 of \cite{BeidarMartindaleMikhalevm1996}, we translate it into our language of $W$-algebras and multipliers, and, in particular, we determine a basis for $P_n^W(M_k(F))$ and the corresponding $W$-codimension sequence. On the other hand, we prove a result that is, in some sense, even more surprising, namely that the variety of \( W \)-algebras generated by \( M_k(F) \), endowed with the above action, has almost polynomial growth.

As a first step, we properly define the notation that will be used throughout this section. Since we are assuming that $W$ acts as the full $k\times k$ matrix algebra, that is, $\Phi(W)\cong M_k(F)$, where $\Phi$ is the acting homomorphism of $W$ on $M_k(F)$, we consider a basis
$$
\mathcal B_W =\{w_{ij} \ | \ i,j\geq 1\}
$$
of $W$ such that $\Phi(w_{ij}) = (R_{e_{ij}}, L_{e_{ij}})$ for all $1\leq i,j\leq k$ and $w_{pq}\in \ker\Phi$ if at least one $p$ or $q$ greater than $k$. We denote by $M_k$ the algebra $M_k(F)$ endowed with this action.

    Let $[x_1,x_2]:=x_1x_2-x_2x_1$ be the {\em commutator} of $x_1$ and $x_2$.   A straightforward computation shows that the following $W$-polynomials are $W$-identities of $M_k$: 
\begin{equation}\label{preconseguenze}
    [w_{11}x_1w_{11},w_{11}x_2w_{11}], \quad w_{ij}w_{ml}x-\delta_{jm}w_{il}x, \quad xw_{ij}w_{ml}-\delta_{jm}xw_{il},
\end{equation}
    where $i,j,l,m\in \{1,\ldots, k\}$ and $\delta_{jm}$ denotes the Kronecker delta. We now establish some consequences needed to reach our goal. 

\begin{lemma}\label{conseguenzastrana}
    For all $i,j,l,m,p,q\in\{1,\ldots, k\}$ the $W$-polynomials
    $$
    w_{ij}x_1w_{lm}x_2w_{pq} - w_{im}x_2w_{pj}x_1w_{lq}
    $$
    are consequences of the polynomials in \eqref{preconseguenze}.
\end{lemma}
\begin{proof}
    In $[w_{11}x_1w_{11},w_{11}x_2w_{11}] = w_{11}x_1w_{11}x_2w_{11}-w_{11}x_2w_{11}x_1w_{11}$ we substitute $x_1$ by $w_{1j}x_1w_{l1}$ and $x_2$ by $w_{1m}x_2w_{p1},$ then by multiplying on the left by $w_{i1}$ and in the right by $w_{1q},$ we get the desired conclusion.
\end{proof}

\begin{theorem}\label{identitadellematriciecodimensioni}
   Let $M_k$, with $k\geq 2$,  be the $W$-algebra $M_k(F)$ of $k\times k$ matrices where $W$ acts on it as  $M_k(F)$ itself by left and right multiplication. 
    Then $\I^W(M_k)$ is generated as $T_W$-ideal by the following $W$-polynomials:
    $$
    [w_{11}x_1w_{11},w_{11}x_2w_{11}], \quad w_{ij}w_{ml}x-\delta_{jm}w_{il}x, \quad xw_{ij}w_{ml}-\delta_{jm}xw_{il}, \quad w_{pq}x, \quad xw_{pq},
    $$
    where $i,j,l,m\in \{1,\ldots, k\}$, $\delta_{jm}$ denotes the Kronecker delta and $p,q\geq 1$ with at least one $p$ or $q$ greater than $k$. Moreover, $c_n^W(M_k) = k^{2(n+1)}$ for all $n\geq 1.$
\end{theorem}
\begin{proof}
    Let $Q$ be the $T_W$-ideal generated by the above polynomials.It is clear that $Q\subseteq \I^W(M_k).$

    In order to prove the opposite inclusion, first notice that by Lemma \ref{conseguenzastrana}, any multilinear monomial of $P_n^W$ can be written modulo $Q$ as a linear combination of monomials of the type
    \begin{equation}\label{relativamenteliberamatrici}
        w_{i_0j_0}x_1w_{i_1j_1}x_2\cdots w_{i_{n-1}j_{n-1}}x_nw_{i_nj_n}
    \end{equation}
    where  $1\leq i_r,j_r\leq k$ for all $0\leq r \leq n$. We claim that the above monomials are linearly independent modulo $P_n^W\cap \I^W(M_k).$ To this end, let $f\in P_n^W\cap \I^W(M_k)$ be a linear combination of the above monomials and write
    $$
    f\equiv  \sum_{I, J, i_0, j_n}\alpha_{I, J, i_0, j_n}w_{i_0j_0}x_1w_{i_1j_1}x_2\cdots w_{i_{n-1}j_{n-1}}x_nw_{i_nj_n} \quad \text{(mod }\I^W(M_k)),
    $$
    where $I=\{i_1,\ldots, i_n\}$ and $J=\{j_0,\ldots, j_{n-1}\}$, with $1\leq i_r,j_r\leq k$ for $0\leq r \leq n$. Let suppose by contradiction that there exist $I$ and $J$ such that $\alpha_{I, J, i_0, j_n}\neq 0$ for some $i_0$ and $j_n.$ Then by making the evaluation $\varphi(x_s)= e_{j_{s-1}i_s}$ for all $1\leq s\leq n$ we get
    $$
    \varphi(f) = \sum_{1\leq i_0,j_n\leq k}\alpha_{I, J, i_0, j_n} e_{i_0j_n}=0,
    $$
    therefore $\alpha_{I, J, i_0, j_n}=0$ for all $i_0$ and $j_n$, a contradiction. Thus the monomials in \eqref{relativamenteliberamatrici} are linearly independent modulo $P_n^W\cap\I^W(M_k)$ and since $P_n^W\cap\I^W(M_k)\supseteq P_n^W\cap Q,$ they form a basis of $P_n^W$ modulo $P_n^W\cap\I^W(M_k)$ and so $Q= \I^W(M_k).$

    Finally, by counting the monomials in \eqref{relativamenteliberamatrici} we get
    $$
    c_n^W(M_k) = (k^2)^{n+1},
    $$
    as claimed.
\end{proof}

It is worth mentioning that the same result about the $W$-codimension sequence was achieved by Gordienko in \cite[Theorem 1]{Gordienko2010} with combinatorial methods.

Next, we shall prove that $\V^W(M_k)$ has almost polynomial growth. To this end, we need the following technical lemma.

\begin{lemma}\label{identitaaggiuntiva}
    Let $\mathcal{V}\subseteq \V^W(M_k).$ Then $\mathcal{V}$ is a proper subvariety if and only if there exists an integer $t\geq 1$ such that
    $$
    w_{11}x_1w_{11}x_2\cdots w_{11}x_tw_{11}\in \I^W(\mathcal{V}).
    $$
\end{lemma}
\begin{proof}
    Let us assume that $w_{11}x_1w_{11}x_2\cdots w_{11}x_tw_{11}\in \I^W(\mathcal{V}).$ Since $w_{11}x_1w_{11}x_2\cdots w_{11}x_tw_{11}\notin\I^W(M_k)$ for all $t\geq 1,$ then it is clear that $\mathcal{V}\subsetneq \V^W(M_k).$

    Conversely, suppose that $\mathcal{V}$ is a proper subvariety of $\V^W(M_k).$ Then there exists a multilinear polynomial $f\in \I^W(\mathcal{V})$ of degree $t$ that is not a $W$-identity of $M_k.$ Arguing as in the proof of Lemma \ref{relativamenteliberamatrici}, we can write
    $$
    f\equiv \sum_{I, J, i_0, j_t}\alpha_{I, J, i_0, j_t}w_{i_0j_0}x_1w_{i_1j_1}x_2\cdots w_{i_{t-1}j_{t-1}}x_tw_{i_tj_t} \quad \text{(mod }\I^W(M_k)),
    $$
  for some scalar $\alpha_{I, J, i_0, j_t}\in F$, not all zero, where $I=\{i_1,\ldots, i_t\}$ and $J=\{j_0,\ldots, j_{t-1}\}$, with $1\leq i_r,j_r\leq k$ for $0\leq r \leq n$.
    
    Suppose that $\alpha_{I, J, i_0, j_t}\neq 0$ for some $I,J,i_0,j_t$. 
    By substituting $x_l$ with $w_{j_{l-i}1}x_lw_{1,i_l},$ for all $1\leq l\leq t$ and then by multiplying on the left by $w_{1i_0}$ and on the right by $w_{j_t1},$ we get the desired conclusion.
\end{proof}

We are now in a position to prove the second main result of this section.

\begin{theorem}\label{matriciAPG}
     Let $M_k$, with $k\geq 2$,  be the $W$-algebra $M_k(F)$ of $k\times k$ matrices where $W$ acts on it as  $M_k(F)$ itself by left and right multiplication.  Then $\V^W(M_k)$ has almost polynomial growth.
\end{theorem}
\begin{proof}
    Let $\mathcal{V}$ be a proper subvariety of $\V^W(M_k).$ Then, by Lemma \ref{identitaaggiuntiva}, there exists an integer $t\geq 1$ such that $w_{11}x_1w_{11}x_2\cdots w_{11}x_tw_{11}\in \I^W(\mathcal{V}).$ If we substitute in the latter one $x_l$ with $w_{1j_{l-1}}x_lw_{i_l1},$ for all $1\leq l\leq t$ and then we multiply on the left by $w_{i_01}$ and on the right by $w_{1j_t}$ we get the consequence
    $$
    w_{i_0j_0}x_1w_{i_1j_1}\cdots w_{i_{t-1}j_{t-1}}x_tw_{t_ij_t}\in \I^W(\mathcal{V}),
    $$
    for all $1\leq i_r,j_r\leq k$, $0\leq r \leq n$. It immediately follows that $c_n^W(\mathcal{V}) = 0$ for all $n\geq t$ and in general $c_n^W(\mathcal{V})\leq k^{2t+2}$ for all $n\geq 1.$ Hence $\V^W(M_k)$ has almost polynomial growth and we are done.
\end{proof}

Note that, in view of the $W$-identities they satisfy, it follows that $M_h\notin\V^W(M_k)$ and $M_k\notin\V^W(M_h)$ for all $k\neq h.$ Therefore, the $W$-algebras of type $M_k$ generate pairwise distinct varieties of almost polynomial growth for every $k\geq 2.$

\section{$W$-identities of $2\times 2$ upper triangular matrix algebra}
In this section we present the results in \cite{MartinoRizzo2024} about generalized identities of the algebra $UT_2$ of $2 \times 2$ upper triangular matrices. Notice that although such results were obtained in the case of $UT_2$-algebras, the present work seeks to generalize and extend the findings to the case where $W$ is an arbitrary unital algebra. 

We start with the following simple remark.

\begin{remark}\label{sottoalgebre di dimensione 2}
If $B$ is a unital subalgebra of $UT_2$ of dimension $2,$ then either $B\cong D$ or $B\cong C,$ where $D=Fe_{11}\oplus Fe_{22}$ and $C=F 1_{UT_2}+ Fe_{12}.$ Indeed, if $B$ is semisimple, then it is clear that it must be $B\cong F\oplus F,$ that is $B\cong D.$ If $B$ is not semisimple, then a basis can be chosen so that it contains an element of the radical, say $j=\alpha e_{11}+\beta e_{22}+\gamma e_{12},$ for some $\alpha,\beta,\gamma\in F.$ If $\alpha\neq 0$ or $\beta\neq 0$ then $j$ can not lie in the radical since it would be non-nilpotent. Hence $j=\gamma e_{12}$ and $B\cong C.$
\end{remark}

As a direct consequence of the previous Remark plus Proposition \ref{Wazioni per algebre con unit}, we get that on $UT_2$ one can define four non-equivalent structures of $W$-algebra according to which unital subalgebra of $UT_2$ is acting by left and right multiplication. We will denote them by $UT_2^F,$ $UT_2^D,$ $UT_2^C$ and simply $UT_2$ depending on whether $\Phi(W)$ is isomorphic to $F,$ $D,$ $C$ or the full algebra $UT_2,$ respectively, where $\Phi$ is the acting homomorphism of $W$ on $UT_2$.

  In $UT_2^F$, since $\Phi(W)\cong F$, we fix a basis $\mathcal{B}_W:=\{w_i\}_{i\in I}$  of $W$ such that $w_0=1_W$ and $w_i\in \ker \Phi$ for all $i \geq 1$. Since we are dealing with $UT_2$ with the ordinary structure of $F$-algebra, the ideal of identities was computed in \cite{Malcev1971} and the following theorem holds.

\begin{theorem}\label{identita e codimensioni di UT2F}
    Let $UT_2^F$ be the $W$-algebra 
    $UT_2$ where $W$ acts on it as the algebra $F$ by left and right multiplication.
    Then $\I^W(UT_2^F)$ is generated as $T_W$-ideal by the $W$-polynomials:
    $$
    w_i x, \quad xw_i, \quad [x_1,x_2][x_3,x_4],
    $$
    for all $i\geq 1.$ Moreover, $c_n^W(UT_2^F) = (n-2)2^{n-1}+2.$
\end{theorem}

In $UT_2^D$, since $\Phi(W)\cong D$, we choose a basis $\mathcal{B}_W:=\{w_i\}_{i\in I}$ of $W$ such that $w_0=1_W$, $\Phi(w_1)=(R_{e_{22}},L_{e_{22}})$ and $w_i\in \ker \Phi$ for all $i \geq 2$. Since we are dealing with $UT_2$ equipped with a $D$-algebra structure, \cite[Theorem 5.1]{MartinoRizzo2024} yields the following result.

\begin{theorem}\label{identita e codimensioni di UT2D}
    Let $UT_2^D$ be the $W$-algebra 
    $UT_2$ where $W$ acts on it as the algebra $D$ by left and right multiplication.
   Then $\I^W(UT_2^D)$ is generated as $T_W$-ideal by the $W$-polynomials:
    $$
    w_i x, \quad xw_i, \quad  w_1^2 x- w_1 x, \quad  x w_1^2 - x w_1, \quad [x_1,x_2]-[x_1,x_2,w_1],
    $$
    for all $i\geq 2.$
    Moreover, $c_n^W(UT_2^D) = n2^{n-1}+2.$
\end{theorem}

 In $UT_2$, since $\Phi(W)\cong UT_2$, consider a basis $\mathcal{B}_W:=\{w_i\}_{i\in I}$ of $W$ such that $w_0=1_W$, $\Phi(w_1)= (R_{e_{22}},L_{e_{22}}),$ $ \Phi(w_2)=(R_{e_{12}},L_{e_{12}})$ and $w_i\in \ker \Phi$ for all $i \geq 3$. Since $UT_2$ is endowed with a $UT_2$-algebra structure in this case, from \cite[Theorem 3.2]{MartinoRizzo2024} we obtain the following result.

\begin{theorem}\label{identita e codimensioni di UT2full}
    Let $UT_2$ be the $W$-algebra  
    $UT_2$ where $W$ acts on it as $UT_2$ itself  by left and right multiplication. 
    Then $\I^W(UT_2)$ is generated as $T_W$-ideal by the $W$-polynomials:
     \begin{align*}
     &  w_i x, \quad xw_i, \quad w_1^2x - w_1 x, \quad  x w_1^2 - x w_1, \quad w_2^2 x, \quad xw_2^2,\quad w_1w_2x, \\
     & xw_1w_2, \quad  w_2w_1x-w_2x,  \quad xw_2w_1-xw_2,  \quad [x_1,x_2]-[x_1,x_2,w_1], 
    \end{align*}
    for all $i\geq 3.$ Moreover, $c_n^W(UT_2) = (n+2)2^{n-1}+2.$
\end{theorem}
 Furthermore, in \cite{MartinoRizzo2024} it was also proved that while $\V^W(UT_2^F)$ and $\V^W(UT_2^D)$ generate two distinct varieties of $W$-algebras of almost polynomial growth, the same does not hold true for $\V^W(UT_2)$ since it contains $UT_2^D.$ We can also remark that by Theorem \ref{thm: semisimple action and algebra with 1}, $UT_2^F\in\V^W(UT_2^C)$ thus $\V^W(UT_2^C)$ is not also of almost polynomial growth.

 Just for the sake of completeness, here we state the result concerning $T_W$-ideal and $W$-codimensions of $UT_2^C.$ This can be proven following step-by-step the proof in \cite[Theorem 3.2]{MartinoRizzo2024} with the necessary yet intuitive modifications. 
 Here, we take a basis $\mathcal{B}_W:=\{w_i\}_{i\in I}$ of $W$ such that $w_0=1_W$, $\Phi(w_1)=(R_{e_{12}},L_{e_{12}})$ and $w_i\in \ker \Phi$ for all $i \geq 2$.

 \begin{theorem}
     Let $UT_2^C$ be the $W$-algebra 
     $UT_2$ where $W$ acts on it as the algebra $C$ by left and right multiplication.
     Then $\I^W(UT_2^C)$ is generated as $T_W$-ideal by the $W$-polynomials:
    $$
    w_i x, \quad xw_i, \quad w_1^2 x, \quad x w_1^2,  \quad w_1[x_1,x_2], \quad [x_1,x_2]w_1, \quad [x_1,x_2][x_3,x_4],
    $$
    for all $i\geq 2.$ Moreover, $c_n^W(UT_2^C) = n2^{n-1}+2.$
 \end{theorem}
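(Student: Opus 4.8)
The plan is to compare $\I^W(UT_2^C)$ with the $T_W$-ideal $\Gamma$ generated by the listed polynomials through a two-sided codimension estimate. First I would check $\Gamma\subseteq\I^W(UT_2^C)$; then bound $\dim P_n^W/(P_n^W\cap\Gamma)$ from above by $n2^{n-1}+2$ via a normal form, and bound $c_n^W(UT_2^C)$ from below by the same number through explicit evaluations. Since $\Gamma\subseteq\I^W(UT_2^C)$ forces $c_n^W(UT_2^C)\le\dim P_n^W/(P_n^W\cap\Gamma)$, the two estimates collapse every inequality to an equality; because $F$ has characteristic zero and $T_W$-ideals are determined by their multilinear parts, this yields at once $\Gamma=\I^W(UT_2^C)$ and the codimension formula.

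All computations rest on the identities $ae_{12}=\alpha(a)e_{12}$ and $e_{12}a=\gamma(a)e_{12}$, where $\alpha(a),\gamma(a)$ denote the diagonal entries of $a$, together with $[UT_2,UT_2]\subseteq Fe_{12}$ and $e_{12}^2=0$; recall that $\Phi(w_0)=(\id,\id)$, $\Phi(w_1)=(R_{e_{12}},L_{e_{12}})$ and $w_i\in\ker\Phi$ for $i\ge2$. Verifying that the generators are identities is then immediate: $w_ix,xw_i$ vanish as $w_i\in\ker\Phi$; $[x_1,x_2][x_3,x_4]$ vanishes since $Fe_{12}\cdot Fe_{12}=0$; and $w_1[x_1,x_2]\equiv0$, $[x_1,x_2]w_1\equiv0$ since $e_{12}\cdot Fe_{12}=Fe_{12}\cdot e_{12}=0$. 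For the upper bound I would put a multilinear monomial into normal form modulo $\Gamma$. Any coefficient $w_i$ with $i\ge2$ kills the monomial (apply $w_ix\equiv0$ or $xw_i\equiv0$ after one substitution and a one-sided product), so all coefficients may be taken in $\{w_0,w_1\}$; and since $e_{12}^2=0$, any monomial carrying two or more coefficients $w_1$ is an identity, which is reduced to $0$ using the relation $w_1xw_1\equiv0$ discussed below.

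The surviving monomials have at most one $w_1$. Those with none are $w_1$-free, hence ordinary monomials, and reduce modulo $[x_1,x_2][x_3,x_4]$ to the classical $UT_2$ normal form, contributing $(n-2)2^{n-1}+2$ elements (the ordinary codimension of Theorem \ref{identita e codimensioni di UT2F}); one checks that no consequence of the remaining generators is $w_1$-free, because every product $w_1w'$ lands in the radical of $\overline{W}$. A monomial with exactly one $w_1$ has the form $u\,w_1\,v$ with $u,v$ products of distinct variables; substituting a product into $[x_1,x_2]w_1\equiv0$ yields the cyclic relation $x_{a_1}(x_{a_2}\cdots x_{a_k})w_1\equiv(x_{a_2}\cdots x_{a_k})x_{a_1}w_1$, which together with the transposition of the two variables adjacent to $w_1$ generates all permutations of the left factor, so $u$ may be freely sorted; symmetrically $w_1[x_1,x_2]\equiv0$ sorts $v$. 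Thus $u\,w_1\,v$ is congruent modulo $\Gamma$ to a representative depending only on the set $S$ of variables lying to the left of $w_1$, giving $2^n$ elements and the bound $(n-2)2^{n-1}+2+2^n=n2^{n-1}+2$. For the matching lower bound, evaluating $u\,w_1\,v$ at diagonal matrices $a_i=\alpha_ie_{11}+\gamma_ie_{22}$ gives $\big(\prod_{i\in S}\alpha_i\prod_{j\notin S}\gamma_j\big)e_{12}$; these $2^n$ functions are independent and, crucially, free of the off-diagonal inputs in their $e_{12}$-component, whereas every $w_1$-free multilinear function has $e_{12}$-component homogeneous of degree one in those off-diagonal entries. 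Hence the $2^n$ one-$w_1$ classes are independent of the $(n-2)2^{n-1}+2$ classes realised by $w_1$-free polynomials, and $c_n^W(UT_2^C)\ge n2^{n-1}+2$.

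The main obstacle is precisely the vanishing, modulo $\Gamma$, of monomials carrying two or more coefficients $w_1$. This is controlled by the relation $w_1xw_1\equiv0$, a manifestation of the nilpotency $e_{12}^2=0$ that has no analogue in the idempotent action defining $UT_2^D$; substituting a product into it annihilates every such monomial. I would therefore include $w_1xw_1$ among the defining relations, since a degree count in the variables shows it cannot follow from the listed commutator identities (which have degree at least two) nor from $w_ix,xw_i$ (whose consequences retain a $\ker\Phi$-coefficient). Managing this nilpotent relation and fitting it together with the commutator-sorting of the one-$w_1$ monomials so that the latter are counted exactly once is the genuine combinatorial content, the rest following the template of \cite{MartinoRizzo2024}.
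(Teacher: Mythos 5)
Your proposal is correct, and in structure it is exactly the argument the paper points to: the two\--sided estimate of \cite[Theorem 3.2]{MartinoRizzo2024}, i.e.\ verify the generators, reduce the coefficients to $\{w_0,w_1\}$, sort the one\--$w_1$ monomials using the cyclic and transposition consequences of $[x_1,x_2]w_1$ and $w_1[x_1,x_2]$, count $(n-2)2^{n-1}+2+2^n=n2^{n-1}+2$, and match this from below by evaluations (your diagonal substitutions, together with the observation that the $e_{12}$\--entry of any $w_1$\--free multilinear evaluation is homogeneous of degree one in the off\--diagonal inputs, do give the independence). All of these steps check out.

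The point where you depart from the printed statement, the relation $w_1xw_1$, is the substantive one, and you are right about it. It is a $W$\--identity of $UT_2^C$ (since $e_{12}\,a\,e_{12}=0$ for all $a\in UT_2$), and your degree argument that it is not a consequence of the five listed polynomials is airtight: every element of $\W$ has degree at least one in the variables (the paper stresses that $W$ is not embedded in $\W$), so substitutions and products with elements of $\W$ never lower degree, and hence the degree\--one component of the $T_W$\--ideal generated by the listed polynomials lies in $\ker\Phi\, x\, W+W\, x\,\ker\Phi$, which cannot contain $w_1xw_1$ because $\bar w_1\otimes\bar w_1\neq 0$ in $\big(W/\ker\Phi\big)\otimes\big(W/\ker\Phi\big)$. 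Read literally, then, the stated generating set is incomplete, although the codimension formula is unaffected; the same degree\--one phenomenon occurs in Theorems \ref{identita e codimensioni di UT2D} and \ref{identita e codimensioni di UT2full} (e.g.\ $w_1x-w_1xw_1\in\I^W(UT_2^D)$ does not follow formally from the generators listed there). What bridges the gap in the paper's own practice is a convention rather than a derivation: degree\--one relations that only record the $\overline{W}$\--bimodule structure of the algebra (here $e_{12}\,UT_2\,e_{12}=0$) are the ``$W$\--trivial polynomials'' in the sense of \cite[Corollary 2.3]{MartinoRizzo2024}, and they are used in reductions without being counted among the generators, exactly as Section 7 uses $[x,g]$ and $[e_i,x,e_j]$ in the proof of Theorem \ref{identitadiEk}. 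So your extra generator is precisely the trivial identity the statement leaves implicit: with it, your proof is complete and establishes $c_n^W(UT_2^C)=n2^{n-1}+2$; without it (and without invoking that convention) the theorem as printed is not correct.
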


\section{Classifying almost polynomial growth $W$-varieties}

 In this section, we present a characterization of the generalized varieties of almost polynomial growth, starting with the following preliminary results.

\begin{proposition}\label{pro: UT_n e UT_2}
Let $F$ be a field of characteristic zero and let us consider the $W$-algebra $UT_k$ of upper triangular matrices of order $k$ over $F$ for some $k >1.$ Then $\V^W(UT_k)$ contains at least one of the algebras in $\left\{UT_2^F, UT_2^D\right\}.$
\end{proposition}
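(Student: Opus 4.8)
The plan is to reduce to the case where $W$ acts through a semisimple subalgebra, and then to exhibit an explicit copy of $UT_2^F$ or $UT_2^D$ as a $W$-subalgebra. Since $UT_n$ is unital, Proposition \ref{Wazioni per algebre con unit} lets me replace the $W$-action by left and right multiplication of a unital subalgebra $B\subseteq UT_n$, so that $\overline{W}=\Phi(W)$ is identified with $\{(R_b,L_b):b\in B\}$. The first step is to pass to the action $A_{\overline{W}_{ss}}$ via Theorem \ref{thm: semisimple action and algebra with 1}. Its hypothesis $\bar J\subseteq\IM(J(UT_n))$ holds because $\bar J=J(\overline{W})$ corresponds to $J(B)$, whose elements are nilpotent, and a nilpotent element of $UT_n$ is automatically strictly upper triangular, hence lies in $J(UT_n)$. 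This gives $A_{\overline{W}_{ss}}\in\V^W(UT_n)$, so it suffices to realize one of $UT_2^F,UT_2^D$ as a $W$-subalgebra of $A_{\overline{W}_{ss}}$. From now on $\overline{W}_{ss}$ is a commutative semisimple subalgebra of $UT_n$ (commutativity follows since a semisimple subalgebra injects into $UT_n/J(UT_n)\cong F^n$), with primitive orthogonal idempotents $f_1,\dots,f_m$ summing to $1$.

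I would then split according to $m$. If $m=1$ the action is scalar, and since $n>1$ the span $U=\spa_F\{e_{11},e_{12},e_{22}\}$ is an ordinary $UT_2$-subalgebra which is trivially $W$-invariant; as a $W$-algebra it is exactly $UT_2^F$. If $m\geq 2$, I fix a primitive idempotent $f_1$; then $f_1\neq 0,1$, and since $Z(UT_n)=F\cdot 1_{UT_n}$ the idempotent $f_1$ is non-central, so one of the Peirce components $f_1 UT_n(1-f_1)$ or $(1-f_1)UT_n f_1$ is nonzero. Writing $1-f_1=\sum_{t\geq 2}f_t$ and refining, I can locate a second primitive idempotent, say $f_2$, together with a nonzero $e_0\in f_1 UT_n f_2$. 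The span $V=Ff_1+Ff_2+Fe_0$ is then a subalgebra isomorphic to $UT_2$ via $f_1\mapsto e_{11}$, $f_2\mapsto e_{22}$, $e_0\mapsto e_{12}$.

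The crux is to check that $V$ is $W$-invariant and carries exactly the $D$-action; the choice of two \emph{primitive} idempotents (rather than $f_1$ and $1-f_1$) is what makes this work. For $\bar w\in\overline{W}_{ss}$ one has $\bar w f_1\in Ff_1$ and $\bar w f_2\in Ff_2$ by orthogonality, from which $\bar w e_0\in Fe_0$ (and likewise on the right) follows using $e_0=f_1 e_0 f_2$, so $V$ is closed under the action. Moreover the restricted multipliers $f_1|_V$ and $f_2|_V$ act precisely as $e_{11}$ and $e_{22}$, while every other $f_t$ acts as $0$; hence $\overline{W}_{ss}|_V=\spa_F\{e_{11},e_{22}\}=D$ and $V\cong UT_2^D$ as a $W$-algebra. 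Since varieties are closed under $W$-subalgebras, in either case $\V^W(UT_n)$ contains one of $UT_2^F,UT_2^D$.

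I expect the main obstacle to be the case $m\geq 2$: guaranteeing that two primitive idempotents are genuinely linked by a nonzero off-diagonal Peirce element (handled via non-centrality of $f_1$ and the refinement $1-f_1=\sum_{t\geq 2}f_t$), while simultaneously keeping the resulting $2\times 2$ block $W$-invariant with the action collapsing exactly to $D$ and not to a larger subalgebra. The reduction in the first step is essential precisely because it strips off the radical of the action, which could otherwise make the restricted action on a $2\times 2$ block of type $C$ or full $UT_2$.
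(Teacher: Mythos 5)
Your proof is correct and follows the same strategic skeleton as the paper's --- reduce to multiplication by a unital subalgebra $B\subseteq UT_n$ via Proposition \ref{Wazioni per algebre con unit}, strip the radical of the action via Theorem \ref{thm: semisimple action and algebra with 1}, then split into the scalar case (yielding $UT_2^F$) and the non-scalar case (yielding $UT_2^D$) --- but your execution of the decisive last step is genuinely different. The paper relies on the concrete normal form of a unital semisimple subalgebra of $UT_n$: orthogonal idempotents that are sums of diagonal matrix units, so that a copy of $UT_2^D$ can be written down explicitly as $\spa_F\{e_{i_1 i_1}, e_{i_{t_1+1} i_{t_1+1}}, e_{i_1 i_{t_1+1}}\}$. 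You instead argue coordinate-free: non-centrality of a primitive idempotent $f_1$ (since $Z(UT_n)=F1_{UT_n}$) together with the Peirce decomposition produces a nonzero $e_0\in f_1 UT_n f_2$, and $V=Ff_1+Ff_2+Fe_0$ is your copy of $UT_2^D$. What this buys: you avoid the block-diagonal description of $B$ (asserted without proof in the paper) and the implicit ordering choice needed to make $e_{i_1 i_{t_1+1}}$ upper triangular; you also verify the hypothesis $\bar{J}\subseteq \IM(J(UT_n))$ of Theorem \ref{thm: semisimple action and algebra with 1} (nilpotent elements of $UT_n$ are strictly upper triangular), which the paper invokes silently. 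One point to tighten: $\bar{w}f_1\in Ff_1$ does not follow from orthogonality alone --- you need each simple component of $\overline{W}_{ss}$ to be $F$ itself rather than a proper field extension, and $F$ is not assumed algebraically closed in this proposition. This is not a real gap: the same embedding $\overline{W}_{ss}\hookrightarrow UT_n/J(UT_n)\cong F^n$ that you used for commutativity also gives splitness, since a nonzero $F$-algebra map from a field extension of $F$ into $F$ is injective, forcing every simple component to be one-dimensional.
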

\begin{proof}
Since $UT_k$ is a unital algebra, Proposition \ref{Wazioni per algebre con unit} ensures that the action of $W$ is equivalent to the action of a subalgebra $B\subseteq UT_k$ by left and right multiplication and $1_{UT_k}:=e_{11}+\cdots + e_{kk}\in B$. Moreover, by Theorem \ref{thm: semisimple action and algebra with 1}, we may assume without loss of generality that $B$ is semisimple. Then there exist positive integers $(t_1,\ldots,t_m)$ such that $\sum_{i=1}^m t_i=k$ and 
$$B\cong F(e_{i_1 i_1} + \cdots + e_{i_{t_1} i_{t_1}})\oplus \cdots \oplus F(e_{i_{t_1+\cdots +t_{m-1}+1} i_{t_1+\cdots +t_{m-1}+1}} + \cdots + e_{i_{t_1+\cdots +t_m} i_{t_1+\cdots +t_m}})$$ where $\sum_{j=1}^k e_{i_{j} i_{j}}=1_{UT_k}$. 

    If $m=1,$ then $B\cong F$ and we are dealing with the ordinary case. Consequently, we have  $UT_2^F\in\V^W(UT_k).$  
    Let us assume then that $m\geq 2.$
    Clearly, the vector space $ V=\spa_F\{ e_{i_1 i_1}, \, e_{i_{t_1+1} i_{t_1+1}} , \, e_{i_1 i_{t_1+1}}\}$ forms a $W$-subalgebra of $UT_k$ isomorphic, as W-algebra, to $UT_2^D. $
    Therefore, we conclude that $UT_2^D \in \V^W(UT_k),$ completing this case.
\end{proof}

Recall that, for an ordered set of positive integers $(t_1,\ldots,t_m)$, the upper block triangular matrix subalgebra of the matrix algebra $M_{t_1+\cdots+t_m}(F)$ is the algebra
$$
UT(t_1,\ldots,t_m)=\begin{pmatrix}
    U_{11} & U_{12} & \cdots & U_{1m}\\
   & U_{22} & \cdots & U_{2m}\\
    &  & \ddots   & \vdots\\
     &  &  & U_{mm}
\end{pmatrix}
$$
where $U_{ii}=M_{t_i }(F)$ for $1\leq i \leq m$, $U_{ij}$ is a rectangular $t_i \times t_j$ matrix over $F$ for $1\leq i< j \leq m$, and all the others entries are $0$. We refer to $U_{ii}$, $1\leq i \leq m$, as diagonal blocks. 
In particular, for $m=1$, we have just one diagonal block and $UT(t_1)=M_{t_1}(F)$. Clearly, $UT(t_1,\ldots,t_m)\cong M_{t_1}(F)\oplus \cdots \oplus M_{t_m}(F) + J$ where $J\cong\oplus_{1\leq i<j\leq k}U_{ij}$ is the Jacobson radical. Then, as a consequence of the theorem of block triangularization \cite[Theorem 1.5.1]{RadjaviRosenthalBook}, we have the following.

\begin{proposition}\label{pro: block triangularization}
Let $B$ be a subalgebra of the algebra $M_{k}(F)$ of matrices of order $k>1$ over $F$ containing $1_{M_k(F)}$. Then there exist positive integers $(t_1,\ldots,t_m)$ such that $\sum_{i=1}^m t_i=k$ and  $B$ is isomorphic to a subalgebra of $UT(t_1,\ldots,t_m)$ with the diagonal blocks being full matrix algebras of the corresponding dimensions.
\end{proposition}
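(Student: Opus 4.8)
The plan is to reduce this to the classical theorem on block triangularization of algebras of matrices, namely \cite[Theorem 1.5.1]{RadjaviRosenthalBook}, by locating a suitable invariant flag and then checking that the diagonal blocks are full matrix algebras. First I would apply the Wedderburn--Malcev decomposition to $B$, writing $B = B_{ss} + J(B)$ with $B_{ss}$ a maximal semisimple subalgebra. Since $F$ is of characteristic zero (and the statement is most naturally handled over an algebraically closed field, or after extending scalars, to which the isomorphism type of the block structure is insensitive), $B_{ss}$ is a direct sum of full matrix algebras $M_{t_1}(F)\oplus\cdots\oplus M_{t_k}(F)$ by Wedderburn--Artin. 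The integers $t_1,\ldots,t_k$ are the dimensions of the irreducible constituents of $B$ acting on the natural module $F^n$, and because $1_{M_n(F)}\in B$ these account for all of $F^n$, giving $\sum_i t_i = n$.

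Next I would produce the flag. The key object is the action of $B$ on $V=F^n$: since $B$ is an associative subalgebra containing the identity, $V$ is a $B$-module, and by taking a composition series $0 = V_0 \subset V_1 \subset \cdots \subset V_m = V$ of $V$ as a $B$-module I obtain a chain of $B$-invariant subspaces whose successive quotients are irreducible $B$-modules. Choosing a basis of $V$ adapted to this flag simultaneously block-upper-triangularizes every element of $B$, which is exactly the content one extracts from the block triangularization theorem: there is a single invertible change of basis after which $B \subseteq UT(t_1,\ldots,t_k)$, where the $t_i$ are the dimensions of the composition factors listed in the order they appear along the flag. This is where \cite[Theorem 1.5.1]{RadjaviRosenthalBook} does the real work.

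The remaining point, and the one I expect to be the main obstacle, is to show that the diagonal blocks of the resulting embedding are the \emph{full} matrix algebras $M_{t_i}(F)$ rather than proper subalgebras. For this I would pass to the projection of $B$ onto the $i$th diagonal block, which is the image of $B$ acting on the irreducible composition factor $V_i/V_{i-1}$. By construction this factor is a simple $B$-module, so by Jacobson's density theorem (or Burnside's theorem, using that $F$ is algebraically closed so that the endomorphism algebra of the simple module is $F$ itself) the image of $B$ in $\End_F(V_i/V_{i-1}) \cong M_{t_i}(F)$ is all of $M_{t_i}(F)$. Thus each diagonal projection of $B$ is surjective onto $M_{t_i}(F)$, which is precisely the assertion that the diagonal blocks are full matrix algebras of the corresponding dimensions. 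Assembling the flag-adapted basis, the containment $B \subseteq UT(t_1,\ldots,t_k)$, and the surjectivity of the diagonal projections yields the proposition.

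I should note the one subtlety worth flagging explicitly: the density argument requires $\operatorname{End}_B(V_i/V_{i-1}) = F$, which holds when $F$ is algebraically closed; if one works over an arbitrary field of characteristic zero the diagonal blocks would a priori be matrix algebras over division rings, so either the standing hypothesis that $F$ is algebraically closed is invoked or one reduces to that case by scalar extension, observing that the block sizes $t_1,\ldots,t_k$ and the fullness of the blocks are preserved under $F \hookrightarrow \overline{F}$.
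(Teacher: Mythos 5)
Your core argument (flag from a composition series of $F^n$ as a $B$-module, plus Jacobson density/Burnside for the fullness of the diagonal compressions) is correct over an algebraically closed field, and it is essentially the paper's own route: the paper offers no proof beyond the citation of \cite[Theorem 1.5.1]{RadjaviRosenthalBook}, and that theorem is proved by exactly the argument you reconstruct. However, your opening step contains a false identification. The simple components of $B_{ss}$ given by Wedderburn--Artin are \emph{not} in bijection with the composition factors of $F^n$, because factors occur with multiplicity: for $B=\{\mathrm{diag}(a,a)\ :\ a\in M_2(F)\}\subset M_4(F)$ one has $B=B_{ss}\cong M_2(F)$, so Wedderburn--Artin produces a single integer $t_1=2$ and $\sum_i t_i=2\neq 4$, whereas the flag produces block sizes $(2,2)$. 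Thus the $t_i$ of the proposition must be defined, as you in fact do later, as the dimensions of the composition factors counted \emph{with multiplicity} in flag order; distinct diagonal blocks may then project onto isomorphic simple components of $B_{ss}$, and your first paragraph contradicts your second. Since nothing in your flag-plus-density argument uses the Wedderburn--Malcev step, the fix is simply to delete it.

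The second problem is your proposed reduction for arbitrary $F$ of characteristic zero: extending scalars cannot rescue the statement, because with the reading you (correctly) give it --- surjective diagonal compressions --- it is genuinely false over non-closed fields. Take $B=\mathbb{C}\subset M_2(\mathbb{R})$, realized as the matrices $\bigl(\begin{smallmatrix} a & -b\\ b & a\end{smallmatrix}\bigr)$; it contains $1_{M_2(\mathbb{R})}$ and acts irreducibly on $\mathbb{R}^2$, so the only admissible block pattern is the single block $UT(2)=M_2(\mathbb{R})$, and the compression of $B$ there is $\mathbb{C}$, a proper subalgebra. In general the diagonal blocks are of the form $M_s(D)$ with $D$ a division algebra over $F$, exactly as you suspect, so algebraic closedness is a necessary hypothesis, not a convenience one can remove by base change. (This is in fact a defect of the paper's own formulation, which states the proposition over an arbitrary field of characteristic zero; the proposition is only ever applied, in Proposition \ref{pro: M_n e UT_2} and through it in Lemma \ref{lem:unital W-algebra PG}, after reducing to the algebraically closed case, where both your argument and the citation to \cite{RadjaviRosenthalBook} --- whose standing setting is $\mathbb{C}$ --- are sound.)
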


\begin{lemma}\label{lem: AiJAk in J^q}
Let $A=A_1\oplus \cdots \oplus A_m + J$ be a finite dimensional unital algebra, where $A_1 \cong \cdots \cong  A_m \cong F,$ and suppose that $A$ is a $W$-algebra. 
If there exist $1\leq r,s\leq  m$, $r \neq s$, such that $A_r JA_s\neq 0$, then  $\V^W(A)$ contains at least one of the algebras in $\left\{UT_2^F, UT_2^D\right\}.$ 
\end{lemma}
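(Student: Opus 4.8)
The plan is to locate inside $A$ a three–dimensional $W$-subalgebra that is isomorphic as an $F$-algebra to $UT_2$ and to compute its $W$-action, which will be forced to be either the action of $F$ or that of $D$. First I would normalise the acting algebra. Since $A$ is unital, Proposition~\ref{Wazioni per algebre con unit} realises the $W$-action as left and right multiplication by a unital subalgebra $B\subseteq A$. Every element of the radical $J(B)$ is nilpotent, hence lies in $J=J(A)$ because $A/J\cong F^{k}$ has no nonzero nilpotents; therefore $J(\overline{W})=J(\Phi(W))\cong J(B)\subseteq\IM(J(A))$ and Theorem~\ref{thm: semisimple action and algebra with 1} applies, giving $A_{\overline{W}_{ss}}\in\V^{W}(A)$. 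Since $\V^{W}(A_{\overline{W}_{ss}})\subseteq\V^{W}(A)$ and this algebra has the same underlying associative structure — so that the hypothesis $A_iJA_r\neq0$ is untouched — it suffices to argue for $A_{\overline{W}_{ss}}$; that is, I may assume the action is by a semisimple subalgebra, which I again denote $B$. As $B\cap J=0$, the projection onto $A/J\cong F^{k}$ embeds $B$, so $B\cong F^{m}$ is commutative.

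Next I would put $B$ in standard position. By Mal'cev's conjugacy theorem the semisimple subalgebra $B$ is carried into the Wedderburn–Malcev complement $A_1\oplus\cdots\oplus A_k$ by an inner automorphism of $A$, and such an automorphism is a $W$-algebra isomorphism between the two associated $W$-structures; hence I may assume $B\subseteq A_1\oplus\cdots\oplus A_k$, with the hypothesis still holding for some distinct indices $i,r$. Writing $e_l$ for the unit of $A_l$, the idempotents of $B$ are then the ``untailed'' sums $g_s=\sum_{l\in S_s}e_l$ over a partition $\{S_s\}$ of $\{1,\dots,k\}$, with no radical parts.

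Now I would choose $0\neq u\in A_iJA_r=e_iJe_r$ and set $V=Fe_i\oplus Fe_r\oplus Fu$. The relations $e_i^2=e_i$, $e_r^2=e_r$, $e_ie_r=e_re_i=0$, $e_iu=ue_r=u$ and $ue_i=e_ru=u^2=0$ identify $V$ with $UT_2$ via $e_i\mapsto e_{11}$, $e_r\mapsto e_{22}$, $u\mapsto e_{12}$. Since each $g_s$ is a sum of the standard idempotents, left and right multiplication by $g_s$ scales $e_i,e_r,u$, so $V$ is a $W$-subalgebra; under the identification with $UT_2$, the element $g_s$ acts on both sides as $e_{11}$ precisely when $i\in S_s$ and as $e_{22}$ precisely when $r\in S_s$. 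If $i$ and $r$ lie in the same block $S_a$, then $g_a$ acts as $1_{UT_2}$ and every other $g_s$ acts as $0$, so $W$ acts on $V$ as $F$ and $V\cong UT_2^F$; if they lie in different blocks the two relevant idempotents act as $e_{11}$ and $e_{22}$, so $W$ acts as $D$ and $V\cong UT_2^D$. In either case $\V^{W}(A)$ contains one of $UT_2^F,UT_2^D$.

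I expect the normalisation of $B$ to be the crux: reducing to a semisimple acting algebra and, above all, clearing the radical parts of its idempotents. Without the latter, multiplication by $B$ would push $u$ into $J^2$ and destroy the $W$-invariance of $V$; this is precisely where Theorem~\ref{thm: semisimple action and algebra with 1} together with Mal'cev conjugacy do the essential work, whereas the final identification of the $W$-action on the copy of $UT_2$ is routine.
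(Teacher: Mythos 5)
Your proof is correct and follows essentially the same route as the paper's: reduce to a semisimple acting subalgebra via Theorem~\ref{thm: semisimple action and algebra with 1} (after checking $\bar{J}\subseteq\IM(J(A))$, which you do and the paper leaves to the reader), then exhibit the three-dimensional $W$-subalgebra spanned by $e_i$, $e_r$ and a nonzero element of $e_iJe_r$, and observe that the induced action forces it to be isomorphic to $UT_2^F$ or $UT_2^D$. The one place where you do genuinely more: after invoking Theorem~\ref{thm: semisimple action and algebra with 1}, the paper immediately writes $be_i=\alpha_b^{(i)}e_i$, $e_ib=\beta_b^{(i)}e_i$, etc., i.e.\ it tacitly assumes the semisimple acting subalgebra $B$ sits inside the fixed complement $A_1\oplus\cdots\oplus A_k$; your explicit Mal'cev-conjugacy normalisation is precisely what legitimises this, since a semisimple subalgebra whose idempotents carry radical tails (e.g.\ $F(e_{11}+e_{22})+F(e_{11}+e_{12})$ inside $UT_2$) violates those scalar relations. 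Your endgame differs only cosmetically: the paper applies Proposition~\ref{isomorfismo con algebra di mult} and Theorem~\ref{thm: semisimple action and algebra with 1} a second time to $A'=\spa_F\{e_i,\,e_r,\,e_ije_r\}$ to conclude that a one- or two-dimensional semisimple unital subalgebra acts, whereas you read the action off the block idempotents $g_s$ directly; both yield the same dichotomy $F$ versus $D$.
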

\begin{proof}
    Since $A$ is unital, by Theorem \ref{isomorfismo con algebra di mult} the action of $W$ on $A$ is equivalent to the action of a suitable unital subalgebra $B$ of $A$ by left and right multiplication. Additionally, by Theorem \ref{thm: semisimple action and algebra with 1}, we can assume that $B$ is semisimple, i.e., $B\cong A_{i_1}\oplus \cdots \oplus A_{i_p}$ with $i_1, \ldots, i_p$ distinct elements in $\{1,\ldots, m\}.$ 

    Now, since $A_rJA_s\neq 0,$ there exist an element $j\in J$ such that $e_rje_s\neq 0,$ where $e_r$ and $e_s$ denote the unit elements of $A_r$ and $A_s,$ respectively. 
    For all $b\in B,$ let
    $$
     be_r=\alpha_b^{(r)} e_r, \quad e_rb=\beta_b^{(r)} e_r, \quad be_s=\alpha_b^{(s)} e_s, \quad 
     e_s b=\beta_b^{(s)} e_s
    $$
    for some $\alpha_b^{(r)},\beta_b^{(r)}, \alpha_b^{(s)} , \beta_b^{(s)}\in F.$ As a consequence, for all $b\in B,$ we obtain
    $$
    be_rj e_s=(be_r)e_rj e_s=\alpha_b^{(r)} e_rj e_s \quad \text{ and } \quad  e_rj e_s b=e_rj e_s(e_s b)=\beta_b^{(s)} e_rj e_s.
    $$
    Now, consider the vector space
    $A'=\spa_F\{e_r, \ e_s, \ e_r j e_s \}.$ 
    Clearly, $A'$ is a unital $W$-subalgebra of $A.$ Again by Theorems \ref{isomorfismo con algebra di mult} and \ref{thm: semisimple action and algebra with 1}, the action of $W$ on $A'$ is equivalent to the action of a semisimple unital subalgebra $B'$ of $A'$ by left and right multiplication. Consequently, we have $\dim_F B'=1 $ or $2.$
If $\dim_F B'=1 $, then $B'\cong F$ and $A'$ is isomorphic to $ UT_2^F$ as $W$-algebras. So, suppose instead that $\dim_F B'=2 .$ Then $1_{B'}=1_{A'}$, and since $B'$ is semisimple, we can take $\{1_{B'}, e_s\}$ as a basis of $B'.$ As a result, $A'$ is isomorphic to $UT_2^D$ as $W$-algebras. Thus, we conclude that either $UT_2^F\in \V^W(A')$ or $UT_2^D\in \V^W(A').$ Since $\V^W(A')\subseteq\V^W(A),$ the lemma is proved.
\end{proof}

\begin{lemma}\label{lem:unital W-algebra PG}
    Let $W$ be a unital algebra over a field $F$ of characteristic zero and let
    $A$ be a finite dimensional unital $W$-algebra. 
    If $UT_2^F, UT_2^D, M_k  \notin \V^W(A)$ for all $k\geq 2$, then the generalized codimension sequence $c_n^W(A)$ is polynomially bounded.
\end{lemma}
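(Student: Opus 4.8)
The plan is to establish a structural dichotomy: I would show that the hypothesis $UT_2^F, UT_2^D \notin \V^W(A)$ forces the $W$-algebra $A$ to be, up to generalized identities, essentially a direct sum of copies of the base field with all ``connecting'' radical contributions suppressed, which is precisely the regime of polynomial codimension growth. First I would invoke Proposition \ref{Wazioni per algebre con unit} to replace the $W$-action by left and right multiplication of a unital subalgebra $B \subseteq A$, and then use Theorem \ref{thm: semisimple action and algebra with 1} to reduce to the case where $B$ is semisimple (the hypothesis $\Bar J \subseteq \IM(J(A))$ for a unital algebra acting on itself is automatic here, since $B$ acts by multiplication and its radical lies in $J(A)$). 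Writing $A = A_1 \oplus \cdots \oplus A_k + J$ via Wedderburn--Malcev with each $A_i$ a simple algebra, I would first argue that each $A_i \cong F$: if some $A_i = M_{t_i}(F)$ with $t_i \geq 2$, then $M_{t_i}(F)$ sits inside $A$ as a $W$-subalgebra, and by the argument in Proposition \ref{pro: M_n e UT_2} one extracts $UT_2^F \in \V^W(M_{t_i}(F)) \subseteq \V^W(A)$, contradicting the hypothesis. (Here I use that the field may be assumed algebraically closed by the standard extension-of-scalars reduction for codimensions.)

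With each $A_i \cong F$ established, the next step is to rule out nonzero ``cross'' radical products. By Lemma \ref{lem: AiJAk in J^q}, if $A_i J A_r \neq 0$ for some $i \neq r$, then $\V^W(A)$ contains $UT_2^F$ or $UT_2^D$, again contradicting the hypothesis. Hence $A_i J A_r = 0$ for all $i \neq r$, so the only surviving radical contributions are the ``diagonal'' pieces $A_i J A_i$ together with the one-sided actions $A_i J$ and $J A_i$. I would then show that such an algebra satisfies, modulo its generalized identities, the key structural identity that the product of two commutators vanishes and that each nonzero monomial essentially factors through a single diagonal block. Concretely, the configuration $A_i J A_r = 0$ (for $i \neq r$) means there is no ``path'' through distinct simple components, so in the formula for the ordinary exponent recalled in Section \ref{sezione4} one has $\exp(A) \leq 1$, forcing subexponential and in fact polynomial growth in the ordinary sense; the content is to transfer this to the generalized setting.

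The main obstacle, and the technical heart of the proof, is this last transfer: controlling the multilinear generalized codimensions $c_n^W(A)$ rather than the ordinary ones, because the coefficients from $\overline{W} \cong B$ may be interleaved arbitrarily among the variables. The plan to handle this is to fix the basis $\mathcal{B}_B = \{b_0 = 1_B, b_1, \ldots, b_m\}$ of the (now semisimple, commutative, $\cong F^k$) subalgebra $B$ and observe that since $B$ is spanned by orthogonal idempotents $e_1, \ldots, e_k$, every coefficient insertion $w_i$ acts on a monomial evaluated in $A$ simply by multiplying by a scalar or projecting onto a component; the condition $A_i J A_r = 0$ then collapses any multilinear monomial in which two \emph{distinct} idempotent coefficients straddle a commutator, so that $P_n^W(A)$ is spanned by the images of polynomials of the shape $x_{i_1}\cdots x_{i_r}[x_{j_1},\ldots]$ decorated by coefficients from a single block, exactly as in the proof of Proposition \ref{pro: M_n e UT_2}. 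A dimension count of these surviving spanning elements, using that there are only finitely many idempotent-block choices and that the commutator-structure is that of a polynomially-bounded variety (the ordinary algebra being generated by a direct sum of copies of $F$ with no connecting radical, whose variety is well known to have polynomial growth), yields a polynomial bound on $c_n^W(A)$. I expect the bookkeeping of where each coefficient $w_i$ may legitimately appear, and the proof that all ``bad'' monomials vanish on evaluation, to be the step requiring the most care.
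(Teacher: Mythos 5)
Your middle steps (ruling out matrix blocks of size $\geq 2$ via Proposition \ref{pro: M_n e UT_2}, and ruling out $A_iJA_r\neq 0$ via Lemma \ref{lem: AiJAk in J^q}) are exactly the paper's, and those uses are sound because there one only needs to exhibit $UT_2^F$ or $UT_2^D$ \emph{inside} $\V^W(A)$. The genuine gap is your very first reduction: you cannot ``assume $B$ is semisimple'' via Theorem \ref{thm: semisimple action and algebra with 1} when the goal is an \emph{upper} bound on $c_n^W(A)$. That theorem gives $A_{\overline{W}_{ss}}\in \V^W(A)$, i.e.\ $\I^W(A)\subseteq \I^W(A_{\overline{W}_{ss}})$, hence $c_n^W(A_{\overline{W}_{ss}})\leq c_n^W(A)$: passing to the semisimple action can only \emph{shrink} the codimensions, so a polynomial bound for $A_{\overline{W}_{ss}}$ says nothing about $A$. (This is why the paper invokes that theorem only inside Lemma \ref{lem: AiJAk in J^q}, where the containment goes the useful way.) Moreover, your claim that the hypothesis $\Bar{J}\subseteq \IM(J(A))$ is automatic is false in general: for a unital subalgebra $B\subseteq A$ one need not have $J(B)\subseteq J(A)$ --- take $B\cong UT_2$ inside $A=M_2(F)$, where $J(A)=0$. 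It does become automatic once one already knows $A/J(A)\cong F\times\cdots\times F$, but that is established only later in the argument, so the order of your reductions is circular.

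This flaw propagates into your final step: the combinatorial spanning argument presumes the coefficients come from an algebra spanned by orthogonal idempotents (i.e.\ $B$ semisimple), whereas the actual action may insert radical coefficients $B\cap J(A)$, which your bookkeeping never addresses; handling them is precisely the Gordienko-type work the paper delegates to Theorem \ref{thm: exponent}. The paper's route avoids both problems: it passes to the quotient $\bar A=A/J$ (legitimate, since quotients stay in $\V^W(A)$ and Theorem \ref{thm: WM decomposizion W-alg} makes the simple components $W$-invariant there, with no need to move $B$ into the semisimple part of $A$), deduces $A=A_1\oplus\cdots\oplus A_k+J$ with $A_i\cong F$, kills the cross terms $A_iJA_r$ by Lemma \ref{lem: AiJAk in J^q}, and then concludes at one stroke: the exponent formula gives $\exp(A)\leq 1$, Theorem \ref{thm: exponent} gives $\exp^W(A)=\exp(A)$, and the exponential-versus-polynomial dichotomy for unital finite-dimensional $W$-algebras yields polynomial boundedness of $c_n^W(A)$ --- all without ever altering the $W$-action on $A$. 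To repair your proposal you would need to replace the opening reduction by this quotient argument and either invoke Theorem \ref{thm: exponent} at the end or carry out the radical-coefficient bookkeeping in full.
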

\begin{proof}
    Using an argument analogous to that used in the ordinary case (see \cite[Theorem 4.1.9]{GZbook}), we can prove that generalized codimensions do not change upon extension of the base field. Therefore, we may assume $F$ is algebraically closed. 
    
    Since the Jacobson radical $J=J(A)$ of $A$ is a $W$-ideal,  $\Bar{A}= A/J$ is a $W$-algebra. Moreover, since $\Bar{A}$ is semisimple and by Corollary \ref{cor: W-simple and simple algebras}, we have that
    $\Bar{A}=\Bar{A}_1 \oplus \cdots \oplus \Bar{A}_m$
    where $\Bar{A}_1, \ldots , \Bar{A}_m$ are $W$-simple algebras.  By Corollary \ref{cor: W-simple algebras}, for all $1\leq i \leq m$, $\Bar{A}_i$ is isomorphic to $M_{n_i}^{B_i}$ as $W$-algebras, where $M_{n_i}^{B_i}$ is the algebra $M_{n_i}(F)$ of matrices of order $n_i\geq 1$ over $F$  with the $W$-action given by left and right multiplication of a subalgebra $B_i\subseteq M_{n_i}(F)$ contaning $1_{M_{n_i}(F)}$.

    Suppose first that $n_i>1$ for some $1\leq i\leq m$. By Proposition \ref{pro: block triangularization} the subalgebra $B_i\subseteq M_{n_i}(F)$ is isomorphic to a subalgebra of $UT(t_1,\ldots,t_p)$
for some positive integers $(t_1,\ldots,t_p)$ such that $\sum_{l=1}^p t_l=n_i$.

 If $t_l=1$ for all $1\leq l \leq p$, then the $W$-algebra $UT_{n_i}^{B_i}$ of $n_i\times n_i$ upper triangular matrices with the $W$-action given by left and right multiplication of $B_i$ is a $W$-subalgebra of $M_{n_i}^{B_i}$. Hence, $\V^W(UT_{n_i}^{B_i})\subseteq\V^W(M_{n_i}^{B_i})$ and by Proposition \ref{pro: UT_n e UT_2}, at least one among $UT_2^F$ and $UT_2^D$ belongs to $\V^W(M_{n_i}^{B_i})$. But since $M_{n_i}^{B_i}\in \V^W(\Bar{A})\subseteq \V^W(A)$ we reach a contradiction.

So, let us assume that $t_l\geq 2$ for some $1\leq l \leq p.$  We endow $B_i$  with a natural $W$-algebra structure by defining the action of $W$ on $B_i$ as the action of $B_i$ on itself via left and right multiplication. Thus $B_i\in \V^W(M_{n_i}^{B_i})$. Moreover, by Theorem \ref{thm: semisimple action and algebra with 1}, we may assume without loss of generality that only the semisimple part $S_{B_i}\cong M_{t_1}(F)\oplus \cdots \oplus M_{t_p}(F)$ of $B_i$ acts on $B_i$ by left and right multiplication.
Since $M_{t_l}(F)$ is a minimal ideal of $S_{B_i},$ it is stable under left and right multiplication.  Hence, the $W$-algebra $M_{t_l}$, i.e., the algebra $M_{t_l}(F)$ equipped with the $W$-action given by left and right multiplication of $M_{t_l}(F)$ on itself, is a $W$-subalgebra of $B_i$.
 Consequently, since $B_i\in\V^W(M_{n_i}^{B_i}) \subseteq \V^W(\Bar{A})\subseteq \V^W(A)$,  it follows that $M_{t_l}\in \V^W(A)$, a contradiction.

    Assume now that $n_i=1$ for all $1\leq i \leq m$, that is, $\Bar{A}_i$ is isomorphic to $F$ for all $i$. As a result, by Wedderburn-Malcev Theorem for associative algebras (see \cite[Theorem 3.4.3]{GZbook}), we have that
    $$
    A=A_1\oplus \cdots \oplus A_m + J,
    $$
    where $A_1\cong \cdots \cong A_m \cong F$ (as ordinary algebras). Thus, by Theorem \ref{thm: exponent} to complete the proof, it is enough to show that $A_r JA_s=0$  for all $1 \leq r,s\leq m$, $r\neq s$.  
    
    Suppose to the contrary that there exist $1\leq r,s\leq  m$, $r \neq s$, such that $A_r JA_s\neq 0$. Then, by Lemma  \ref{lem: AiJAk in J^q}  it follows that either $UT_2^F\in \V^W(A)$ or $UT_2^D\in \V^W(A)$, a contradiction and the lemma is proved.	
\end{proof}

Now, we are in a position to characterize the generalized varieties of polynomial growth.

\begin{theorem}
    	Let $W$ be a unital algebra over a field $F$ of characteristic zero and $A$ be a finite dimensional $W$-algebra. Then the sequence $c_n^W(A)$, $n\geq 1$, is polynomially bounded if and only if $UT_2^F, UT_2^D, M_k \notin \V^W(A)$ for all $k\geq 2$.
\end{theorem}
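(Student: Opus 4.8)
The plan is to prove both directions of this characterization, most of which reduces to results already established in the excerpt. The key observation is that the theorem characterizes polynomial growth in terms of the absence of two specific algebras from the generated variety, and the engine behind this is the notion of almost polynomial growth.

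First I would establish the forward direction: if $c_n^W(A)$ is polynomially bounded, then $UT_2^F, UT_2^D \notin \V^W(A)$. This follows from the fact that both $UT_2^F$ and $UT_2^D$ generate varieties of almost polynomial growth, as noted in the excerpt following Theorem \ref{identita e codimensioni di UT2full}. Explicitly, from the codimension formulas $c_n^W(UT_2^F) = (n-2)2^{n-1}+2$ and $c_n^W(UT_2^D) = n2^{n-1}+2$, both sequences grow exponentially and hence are not polynomially bounded. If either $UT_2^F$ or $UT_2^D$ belonged to $\V^W(A)$, then it would be a $W$-subvariety of $\V^W(A)$, forcing $c_n^W(A) \geq c_n^W(UT_2^F)$ (or the $UT_2^D$ analogue) for all $n$, contradicting the polynomial boundedness of $c_n^W(A)$.

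The converse direction is the substantive part, but here the heavy lifting has already been done. The plan is to reduce the general finite dimensional $W$-algebra to the unital case and then invoke Lemma \ref{lem:unital W-algebra PG}. Since $A$ is a finite dimensional $W$-algebra, I would first pass to the unital $W$-algebra obtained via the semi-direct product construction of Proposition \ref{prop: semi-direct product}: the algebra $\overline{W} \ltimes A$ contains $A$ as a $W$-subalgebra, and since $W$ is unital (as we may assume throughout Section \ref{sezione4}), $\overline{W}$ is unital and hence so is $\overline{W} \ltimes A$. Then $\V^W(A) \subseteq \V^W(\overline{W} \ltimes A)$, and because passing to a subvariety cannot increase codimensions, polynomial boundedness of $c_n^W(\overline{W} \ltimes A)$ implies that of $c_n^W(A)$. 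The hypothesis $UT_2^F, UT_2^D \notin \V^W(A)$ transfers by checking that these algebras also avoid the larger variety, which holds because $UT_2^F$ and $UT_2^D$ have exponential codimension growth while the inner-multiplier part $A$ sitting inside $\overline{W} \ltimes A$ governs the relevant generalized identities.

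The main obstacle I anticipate is precisely this transfer step: ensuring that $UT_2^F, UT_2^D \notin \V^W(\overline{W} \ltimes A)$ follows from their absence in $\V^W(A)$, since a priori enlarging to the unital cover could introduce new $W$-subalgebras isomorphic to $UT_2^F$ or $UT_2^D$. The resolution is that any such $W$-subalgebra, being generated by elements whose action is governed by $\overline{W}$, must already be detectable inside $A$ via the identification of $A$ with $i_2(A)$; more precisely the semisimple reduction of Theorem \ref{thm: semisimple action and algebra with 1} shows that the almost-polynomial-growth witnesses $UT_2^F$ and $UT_2^D$ arise only from the diagonal action structure already present in $A$. Once $A$ is known to be a finite dimensional unital $W$-algebra with $UT_2^F, UT_2^D \notin \V^W(A)$, Lemma \ref{lem:unital W-algebra PG} directly yields that $c_n^W(A)$ is polynomially bounded, completing the proof.
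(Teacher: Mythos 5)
Your outline coincides with the paper's own proof: the forward direction follows from the exponential growth of $c_n^W(UT_2^F)$ and $c_n^W(UT_2^D)$, and the converse is reduced, via the semi-direct product $\overline{W}\ltimes A$ of Proposition \ref{prop: semi-direct product}, to Lemma \ref{lem:unital W-algebra PG}, using $\V^W(A)\subseteq\V^W(\overline{W}\ltimes A)$ and hence $c_n^W(A)\leq c_n^W(\overline{W}\ltimes A)$. You also correctly isolated the delicate point: Lemma \ref{lem:unital W-algebra PG} can be applied to $\overline{W}\ltimes A$ only if $UT_2^F,UT_2^D\notin\V^W(\overline{W}\ltimes A)$, and this is not formally implied by $UT_2^F,UT_2^D\notin\V^W(A)$, because the variety inclusion goes the wrong way. (The paper's proof invokes the lemma at this point without verifying its hypothesis, so the obstacle you flagged is real and is not addressed there either.)

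The genuine gap is that your proposed resolution of this step is not a proof, and the claim it relies on is false. Concretely, let $W=F\oplus F$ and $A=Fa$ with $a^2=0$, with $W$-action $(\alpha,\beta)a:=\beta a$ and $a(\alpha,\beta):=\alpha a$. All the $W$-algebra axioms hold trivially, and $\Phi$ identifies $W$ with $\M(A)\cong F\oplus F$, since $\End_F(A)=F\id_A$ and every pair of endomorphisms is a multiplier when $A^2=0$. Every multilinear $W$-monomial in $n\geq 2$ variables evaluates to a scalar multiple of $a_{\sigma(1)}\cdots a_{\sigma(n)}\in A^2=0$, so $c_n^W(A)=0$ for $n\geq 2$, and $UT_2^F,UT_2^D\notin\V^W(A)$ because $x_1x_2$ is a $W$-identity of $A$ but not of $UT_2^F$ or $UT_2^D$. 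Nevertheless, writing $u=(\id_A,0)$ and $v=(0,\id_A)$ for the orthogonal idempotents spanning $\overline{W}=\M(A)$, the map $v\mapsto e_{11}$, $u\mapsto e_{22}$, $a\mapsto e_{12}$ is an algebra isomorphism of $\overline{W}\ltimes A$ onto $UT_2$ (note $va=a=au$, $av=ua=0$) carrying the $W$-action to left and right multiplication by $D$; that is, $\overline{W}\ltimes A\cong UT_2^D$ as $W$-algebras. Hence $UT_2^D\in\V^W(\overline{W}\ltimes A)$ even though $UT_2^D\notin\V^W(A)$, the witness is in no sense ``detectable inside'' the one-dimensional algebra $A$, and $c_n^W(\overline{W}\ltimes A)=n2^{n-1}+2$ is exponential, so the bound $c_n^W(A)\leq c_n^W(\overline{W}\ltimes A)$ yields nothing. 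Theorem \ref{thm: semisimple action and algebra with 1} cannot rescue the step either: it concerns replacing an action by its semisimple part under the hypothesis $J(\overline{W})\subseteq\IM(J(A))$, not locating $W$-subalgebras. Note that this example does not contradict the theorem itself ($c_n^W(A)$ is polynomially bounded here); it shows that the reduction to the unital cover is invalid as stated, so a correct argument must treat non-unital $A$ directly, for instance by running the structural analysis of Lemma \ref{lem:unital W-algebra PG} on $A$ itself via Theorem \ref{thm: WM decomposizion W-alg}.
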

\begin{proof}
    It is clear that if $c_n^W(A)$ is polynomially bounded, then $UT_2^F, UT_2^D, M_k \notin \V^W(A)$, for $k\geq 2$, since, by Theorems \ref{identita e codimensioni di UT2F}, \ref{identita e codimensioni di UT2D} and \ref{identitadellematriciecodimensioni}, $UT_2^F$, $UT_2^D$ and $M_k$, for $k\geq 2$, generate generalized varieties of exponential growth.

    Now, assume that $UT_2^F, UT_2^D , M_k\notin \V^W(A)$ for all $k\geq 2$. Let  $\Phi$ denote the acting homomorphism of $W$ in $A$, and let $\overline{W}$ denote the subalgebra $\Phi(W)$ of $\M(A).$ We can then consider the semi-direct product $\overline{W} \ltimes A$ that by Proposition \ref{prop: semi-direct product} is a unital $W$-algebra. Since both $\overline{W}$ and $A$ are finite-dimensional, it follows that $\overline{W} \ltimes A$ is also finite-dimensional.
    Thus, by Lemma \ref{lem:unital W-algebra PG} we have that $c_n^W(\overline{W}\ltimes A)$ is polynomially bounded. Additionally,  since by Proposition \ref{prop: semi-direct product} we can identify $A$ with a suitable $W$-subalgebra  of $\overline{W}\ltimes A$, it follows that  $\V^W(A) \subseteq \V^W(\overline{W}\ltimes A).$ Therefore, $c^W_n(A)\leq c^W_n(\overline{W}\ltimes A)$ and consequently, $c_n^W(A)$ is also polynomially bounded, as required.
\end{proof}

As a consequence, we have the following characterization of generalized varieties of almost polynomial growth.

\begin{corollary}
$UT_2^F$, $UT_2^D$ and $M_k$, for $k\geq 2$, are the only finite dimensional $W$-algebras generating generalized varieties of almost polynomial growth.
\end{corollary}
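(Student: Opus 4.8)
The plan is to prove the Corollary as a direct consequence of the preceding Theorem together with the explicit codimension computations recorded in Theorems \ref{identita e codimensioni di UT2F} and \ref{identita e codimensioni di UT2D}. The statement asserts two things about the $W$-algebras $UT_2^F$ and $UT_2^D$: first, that each of them \emph{does} generate a generalized variety of almost polynomial growth, and second, that these are the \emph{only} finite dimensional $W$-algebras with this property. I would treat these two assertions separately, since they draw on different inputs.

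First I would establish that $\V^W(UT_2^F)$ and $\V^W(UT_2^D)$ have almost polynomial growth. The non-polynomial part of the definition is immediate: by Theorem \ref{identita e codimensioni di UT2F} we have $c_n^W(UT_2^F)=(n-2)2^{n-1}+2$ and by Theorem \ref{identita e codimensioni di UT2D} we have $c_n^W(UT_2^D)=n2^{n-1}+2$, both of which grow exponentially and hence are not polynomially bounded. It remains to verify that every \emph{proper} $W$-subvariety has polynomial growth. For this I would invoke the characterization Theorem just proved: a finite dimensional $W$-variety $\V^W(A)$ is polynomially bounded if and only if neither $UT_2^F$ nor $UT_2^D$ lies in it. The key point to record is that $\V^W(UT_2^F)$ contains no proper subvariety that still contains $UT_2^D$, and symmetrically for $\V^W(UT_2^D)$; this is exactly the content cited from \cite{MartinoRizzo2024} in the paragraph following Theorem \ref{identita e codimensioni di UT2full}, namely that $\V^W(UT_2^F)$ and $\V^W(UT_2^D)$ are two \emph{distinct} varieties each of almost polynomial growth. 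Concretely, any proper $W$-subvariety $\mathcal{U}\subsetneq\V^W(UT_2^F)$ cannot contain $UT_2^F$ (else $\mathcal{U}=\V^W(UT_2^F)$), and since $UT_2^D\notin\V^W(UT_2^F)$ it cannot contain $UT_2^D$ either; hence by the characterization Theorem $\mathcal{U}$ is polynomially bounded. The same argument applies verbatim to $\V^W(UT_2^D)$.

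Next I would prove minimality, i.e.\ that no other finite dimensional $W$-algebra $A$ generates a variety of almost polynomial growth. Suppose $\V^W(A)$ has almost polynomial growth. Then $c_n^W(A)$ is not polynomially bounded, so by the characterization Theorem at least one of $UT_2^F,UT_2^D$ belongs to $\V^W(A)$, say $UT_2^F\in\V^W(A)$. This yields a $W$-subvariety $\V^W(UT_2^F)\subseteq\V^W(A)$ which is itself of exponential (non-polynomial) growth. By the definition of almost polynomial growth, every \emph{proper} subvariety of $\V^W(A)$ is polynomially bounded, so $\V^W(UT_2^F)$ cannot be a proper subvariety; hence $\V^W(UT_2^F)=\V^W(A)$, forcing $A$ to generate the same variety as $UT_2^F$. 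The same reasoning in the case $UT_2^D\in\V^W(A)$ gives $\V^W(A)=\V^W(UT_2^D)$. Thus up to generated variety, $UT_2^F$ and $UT_2^D$ are the only possibilities.

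The main obstacle is the minimality step, and specifically the implicit claim that $UT_2^F$ and $UT_2^D$ generate \emph{distinct} varieties, so that neither contains the other; without this the two cases could collapse and one would have to rule out the situation in which both generalized algebras lie properly inside a larger almost-polynomial variety. This separation is precisely what is imported from \cite{MartinoRizzo2024} (and is consistent with the mismatched codimension formulas, since equal varieties would force $c_n^W(UT_2^F)=c_n^W(UT_2^D)$ for all $n$, contradicting $(n-2)2^{n-1}+2\neq n2^{n-1}+2$). I would therefore make sure to state explicitly that $UT_2^D\notin\V^W(UT_2^F)$ and $UT_2^F\notin\V^W(UT_2^D)$ as the crucial inputs, citing the relevant result, and then the rest of the argument is the purely formal two-case dichotomy driven by the characterization Theorem.
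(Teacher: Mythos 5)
Your overall structure is exactly the paper's: the corollary is stated there without a separate proof, as an immediate consequence of the characterization theorem (which gives your minimality step verbatim) together with the fact, imported from \cite{MartinoRizzo2024} and the codimension formulas of Theorems \ref{identita e codimensioni di UT2F} and \ref{identita e codimensioni di UT2D}, that $UT_2^F$ and $UT_2^D$ generate two distinct generalized varieties of almost polynomial growth. Your second part (no other finite dimensional $W$-algebra can do it) is correct and is precisely the intended argument.

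Two caveats on your first part, though. First, your ``concrete'' re-derivation --- take a proper subvariety $\mathcal{U}\subsetneq\V^W(UT_2^F)$, note it contains neither $UT_2^F$ nor $UT_2^D$, and conclude polynomial growth ``by the characterization Theorem'' --- oversteps that theorem's hypotheses: it is stated for $\V^W(A)$ with $A$ a \emph{finite dimensional} $W$-algebra, and an arbitrary proper subvariety of $\V^W(UT_2^F)$ is not known, anywhere in this paper, to be generated by a finite dimensional $W$-algebra (in the ordinary case this kind of reduction is a nontrivial theorem of Kemer, and no $W$-analogue is supplied here). Second, your inference ``distinct varieties, so neither contains the other'' is a non sequitur: distinct varieties can be nested, and the codimension comparison $(n-2)2^{n-1}+2 < n2^{n-1}+2$ only rules out $\V^W(UT_2^D)\subseteq\V^W(UT_2^F)$, not the reverse inclusion. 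Neither slip sinks the proof, because the statement you are trying to re-derive --- that $\V^W(UT_2^F)$ and $\V^W(UT_2^D)$ each have almost polynomial growth (which, with distinctness, also yields mutual non-containment) --- is exactly what \cite{MartinoRizzo2024} provides and what the paper quotes after Theorem \ref{identita e codimensioni di UT2full}. The clean fix is to cite that result for the full first assertion of the corollary and drop the re-derivation, keeping your minimality argument as the actual content supplied by the characterization theorem.
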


\section{On the Specht property}

This final section deals with the Specht property of generalized $T$-ideals showing a conjecture and some future directions.

We say that a class of algebras has the Specht property if any $T$-ideal of that class is finitely generated. For example, a celebrated theorem of Kemer states that the class of associative algebras in characteristic zero has such property (see \cite{Kemer1985}). This result represents one of the milestones in the theory of algebras with polynomial identities, so that many mathematicians have continued to follow this line of research, attempting to solve the Specht problem in other contexts as well, such as algebras with additional structures, non-associative algebras and so on (see for instance \cite{AljadeffGiambrunoKarasik2017, AljadeffKanel2010, VaisZelmanov1989}). All the affirmative answers are given in characteristic zero, since in positive characteristic there are several examples, even for associative algebras, of infinitely generated $T$-ideals (see \cite{Belov1999, Grishin1999, Shchigolev1999}).

Throughout the paper, we have always considered finite dimensional $W$-algebras, although $W$ had no restrictions on the dimension or on the number of generators. Regardless, a keen observer may notice that Theorems \ref{identitadellematriciecodimensioni}, \ref{identita e codimensioni di UT2F}, \ref{identita e codimensioni di UT2D} and \ref{identita e codimensioni di UT2full} raise an interesting matter regarding the number of generators of a $T_W$-ideal. In general, is it or is it not finitely generated? 

Furthermore, working with infinite dimensional algebras can lead to other unusual situations. For instance, if $W$ has infinite dimension, then $P_n^W$ is also infinite dimensional and so can be $P_n^W(A).$ In other terms, this means that the generalized codimensions can be infinite, as already anticipated in Section \ref{sezione4}.

It is clear that such ``pathological" behavior of the codimensions may appear provided that both $W$ and $A$ are infinite dimensional, since if $A$ has finite dimension, then the action of $W$ must be finite,  i.e., the dimension of $\Phi(W)$ is finite, as in the case of $UT_2$ for example.

Concerning the Specht property, it seems that the crucial aspect is not the dimension of $W,$ but rather its being finitely generated. More precisely, we state the following conjecture.

\begin{conjecture}\label{congettura}
    Let $A$ be a $W$-algebra over a field of characteristic zero. If $W$ is finitely generated, then $\I^W(A)$ is finitely generated.
\end{conjecture}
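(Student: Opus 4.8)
The final statement is a conjecture, so the following is a strategy rather than a proof; the plan is to reduce the generalized Specht problem to Kemer's ordinary theorem \cite{Kemer1985} by encoding the finitely many generators of $W$ as finitely many extra variables. First I would carry out the standard reductions. As the $W$-action factors through the acting homomorphism $\Phi$, one may replace $W$ by $\overline{W}=\Phi(W)\subseteq\M(A)$ without changing $\I^W(A)$, and $\overline{W}$ is again finitely generated, being the image of a finitely generated algebra. Embedding $A$ as a $W$-subalgebra of the unital semi-direct product $\overline{W}\ltimes A$ of Proposition \ref{prop: semi-direct product}, one reduces to the unital case, where by Proposition \ref{Wazioni per algebre con unit} the action is left and right multiplication by a unital, finitely generated subalgebra $B\subseteq A$, generated say by $\beta_1,\dots,\beta_m$. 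In this picture a multilinear $W$-identity of $A$ is precisely an ordinary multilinear identity of $A$ whose interspersed coefficients are specialized to fixed elements of $B$.

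Next I would build a dictionary with an ordinary free algebra. Introduce coefficient variables $Y=\{y_1,\dots,y_m\}$ and work in $F\langle X\cup Y\rangle$; expressing every element of $\overline{W}$ as a noncommutative polynomial in $\beta_1,\dots,\beta_m$, each $W$-polynomial lifts to an ordinary polynomial in which the $y_j$ record the $\beta_j$. The aim is to identify $\I^W(A)$ with the ordinary identities of an associated $F$-algebra $\widehat{A}$ in which the $x_i$ range over $A$ while the $y_j$ are constrained to the generators $\beta_j$. Granting such an identification, Kemer's theorem would yield a finite generating set for the corresponding ordinary $T$-ideal, which one then translates back into finitely many $T_W$-generators of $\I^W(A)$; here the hypothesis that $W$ is finitely generated is used twice, to bound the number of coefficient variables and to re-express the resulting $Y$-words as genuine $W$-coefficients.

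The main obstacle is exactly this identification. The image of $\I^W(A)$ in $F\langle X\cup Y\rangle$ is invariant only under the restricted substitutions that fix the coefficient variables $Y$ (equivalently, that specialize them inside $\overline{W}$), so it is not a priori an ordinary $T$-ideal and Kemer's theorem does not apply verbatim. Overcoming this requires either passing to a suitable universal, relatively free algebra whose genuine ordinary $T$-ideal pulls back exactly to $\I^W(A)$, or extending the Kemer machinery to identities with constants in a finitely generated subalgebra. I expect the latter to be the crux, and to be the precise point at which finite generation of $W$ is indispensable, consistent with the failure of the Specht property in the non-finitely-generated case of the Grassmann algebra discussed above.
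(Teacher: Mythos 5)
The statement you chose is Conjecture \ref{congettura}; the paper offers no proof of it --- it is stated as an open problem, supported only negatively by Theorems \ref{identitadiEk} and \ref{identitadiE}, which show the conclusion fails once $W$ is not finitely generated. So there is no argument in the paper to compare yours against, and your proposal, as you yourself say, is a strategy with a hole rather than a proof. The hole you identify is the genuine one: after encoding the coefficients $\beta_1,\dots,\beta_m$ as variables $y_1,\dots,y_m$, the image of $\I^W(A)$ in $F\langle X\cup Y\rangle$ is invariant only under substitutions that fix (or specialize inside $\overline{W}$) the $y_j$'s, hence it is not an ordinary $T$-ideal, and Kemer's theorem \cite{Kemer1985} gives nothing. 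This is not a technicality to be smoothed out later; it is precisely the open content of the conjecture, so any write-up should present the Kemer reduction as a plan of attack, not as a proof modulo details.

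Beyond that, two of your ``standard reductions'' also leak. First, replacing $W$ by $\overline{W}=\Phi(W)$ does change the problem: $\I^W(A)$ lives in $\W$, not in $\overline{W}\langle X\rangle$, and it contains the kernel identities (e.g.\ $wx_1$ and $x_1w$ for $w\in\ker\Phi$). To transfer finite generation from $\I^{\overline{W}}(A)$ back to $\I^W(A)$ you would need, at the very least, $\ker\Phi$ to be finitely generated as a two-sided ideal of $W$; finitely generated algebras are not Noetherian, so this is not automatic even when $W$ is finitely generated. Note that the paper's counterexample lives exactly in this kernel part: the non-finitely-generated family $e_jx$, $xe_j$, $j\geq k+1$, of Theorem \ref{identitadiEk}. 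Second, the embedding of $A$ into $\overline{W}\ltimes A$ gives $\I^W(\overline{W}\ltimes A)\subseteq \I^W(A)$, so the inclusion points the wrong way: finite generation of the smaller $T_W$-ideal says nothing about the larger one. What the unital reduction actually requires is the hereditary statement that \emph{every} $T_W$-ideal containing $\I^W(\overline{W}\ltimes A)$ is finitely generated, i.e.\ the Specht property for the whole variety $\V^W(\overline{W}\ltimes A)$, not just for one ideal. Both points should be flagged as additional open steps in your strategy.
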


To support the previous conjecture and simultaneously provide an example of infinite codimensions, we now compute the $T_W$-ideal of generalized identities of the unital Grassmann algebra $E$, assuming that either $\Phi(W)\cong E$ or $\Phi(W)\cong E_k,$ the finite dimensional Grassmann algebra.
To achieve this, we assume that $W$ is not finitely generated. 

Recall that the Grassmann algebra $E$ with unity is the algebra generated over $F$ by the elements $1,$ $e_1,e_2,\ldots$ such that $e_ie_j=-e_je_i.$ Moreover, a basis of $E$ consists of all the elements $g =e_{i_1}e_{i_2}\cdots e_{i_n}$ such that $i_1<i_2<\cdots<i_n$ and $n\geq 0.$ Here, we are assuming that if $n=0$ then $g=1.$ It is clear that by construction $E$ is not finitely generated.

One can also consider a finite set of generators and construct the finite dimensional Grassmann algebra, which is a subalgebra of $E.$ More precisely, $E_k$ is the algebra generated by $1,$ $e_1,e_2,\ldots , e_k$ and $\dim_F E_k= 2^k.$

Since $W$ is not finitely generated and there is no loss of generality in considering $E,$ we set $W=E$ for simplicity of notation.

We start computing the $T_E$-ideal of $E^{(k)}$, i.e., $E$ with the action of $E_k$ by left and right multiplication, for all $k\geq 1.$
A simple computation shows that the polynomials 
\begin{equation}\label{generatoridigrassmann}
    [x_1,x_2,x_3], \quad [e_i,x_1,x_2], \quad e_jx, \quad xe_j
\end{equation}
for all $1\leq i\leq k$ and for all $j\geq k+1,$ belong to $\I^E(E^{(k)}).$ Moreover, notice that the polynomial $[x,g]=0,$ where $g = e_{i_1}\cdots e_{i_{2r}}$ lies in the center of $E,$ and $[e_i,x,e_j]=0$ are $E$-trivial polynomials in the sense of \cite[Corollary 2.3]{MartinoRizzo2024}.
Also, by straightforward calculations we have the equality
\begin{equation}\label{unpolinomiotrivialedigrassmann}
    [e_i,xe_j] = [e_i,x]e_j+2xe_ie_j.
\end{equation}

\begin{lemma}\label{conseguenzeGrassmann}
    The following generalized polynomials are consequences of polynomials in \eqref{generatoridigrassmann}:
    \begin{equation*}
        \begin{split}
            &[x_1,x_2][x_3,x_4]+[x_1,x_4][x_3,x_2] \\
            &[e_i,x_2][x_3,x_4]+[e_i,x_4][x_3,x_2] \\
            & [x_1,x_2,e_i] \\
            & [e_i,x_1][e_j,x_2] + 2[x_1,x_2]e_ie_j
        \end{split}
    \end{equation*}
    for all $i,j\geq1.$
\end{lemma}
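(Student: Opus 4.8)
The plan is to derive each of the four listed polynomials as a consequence of the generators in \eqref{generatoridigrassmann}, working entirely inside the $T_E$-ideal they generate. The fundamental tools are the Jacobi-type identity for the commutator together with the generator $[x_1,x_2,x_3]$, which forces commutators to behave like central elements modulo the ideal, and the linearized generator $[e_i,x_1,x_2]$, which lets me replace one slot of a commutator by a basis element $e_i$. First I would treat the first polynomial. Since $[x_1,x_2,x_3]\equiv 0$, every product of two commutators can be rearranged freely: the commutator $[x_1,x_2]$ commutes with all variables modulo the ideal, so I can compute $[x_1,x_2][x_3,x_4]$ by expanding, using $[a,b]=ab-ba$ and the fact that third commutators vanish, and show that the symmetric combination $[x_1,x_2][x_3,x_4]+[x_1,x_4][x_3,x_2]$ collapses to zero. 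The cleanest route is to observe that modulo $[x_1,x_2,x_3]$ the product $[x_1,x_2][x_3,x_4]$ is antisymmetric in the appropriate pairs, so swapping $x_2\leftrightarrow x_4$ produces the negative, giving the desired identity.

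Next I would obtain the second polynomial by a direct substitution argument: it is the evaluation of the first polynomial under $x_1\mapsto e_i$. Since consequences of a $T_E$-ideal are closed under substituting variables by arbitrary elements of $\W$, and in particular by the coefficients $e_i\in E$, the relation $[x_1,x_2][x_3,x_4]+[x_1,x_4][x_3,x_2]\equiv 0$ immediately specializes to $[e_i,x_2][x_3,x_4]+[e_i,x_4][x_3,x_2]\equiv 0$. The third polynomial, $[x_1,x_2,e_i]$, follows from combining $[x_1,x_2,x_3]\equiv 0$ (setting $x_3\mapsto e_i$) with the generator structure; I would note that $[x_1,x_2]$ is central modulo the ideal, so bracketing it with any element, including $e_i$, yields zero. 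Care is needed here because $e_i$ is a coefficient rather than a variable, but substitution of $x_3$ by $e_i$ in the triple commutator is legitimate in the free $W$-algebra.

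The fourth polynomial, $[e_i,x_1][e_j,x_2]+2[x_1,x_2]e_ie_j$, is the main obstacle and requires the identity \eqref{unpolinomiotrivialedigrassmann}, namely $[e_i,xe_j]=[e_i,x]e_j+2xe_ie_j$. My plan is to start from $[e_i,x_1x_2]$-type expansions and exploit the second generator $[e_i,x_1,x_2]\equiv 0$, which says $[e_i,x_1]$ commutes with $x_2$ modulo the ideal. I would compute $[e_i,x_1][e_j,x_2]$ by first using $[e_i,x_1,x_2]\equiv 0$ to move factors past one another, then apply \eqref{unpolinomiotrivialedigrassmann} to rewrite the product of two such commutators, where the cross term $2xe_ie_j$ enters. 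The coefficient $2$ and the specific product $e_ie_j$ must be tracked carefully through the anticommutation $e_ie_j=-e_je_i$ and the centrality of even-length monomials; balancing these signs is where the computation is most delicate. Once the bookkeeping is arranged so that the $[e_i,x]e_j$-type terms cancel against rearranged copies of $[x_1,x_2]e_ie_j$, the stated relation emerges. I expect this last identity to absorb most of the genuine computational effort, whereas the first three are formal consequences obtained by rearrangement and substitution.
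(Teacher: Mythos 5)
Your handling of the first polynomial matches the paper's (both follow \cite[Theorem 4.1.8]{GZbook}), and your sketch for the fourth points in roughly the right direction, but your derivations of the second and third polynomials rest on a step that is not available in this framework: you substitute the coefficient $e_i$ for a variable ($x_1\mapsto e_i$ in the first polynomial to get the second, and $x_3\mapsto e_i$ in $[x_1,x_2,x_3]$ to get the third), and you assert that such a substitution ``is legitimate in the free $W$-algebra.'' It is not. In this paper, substitutions are the $W$-algebra endomorphisms of $\W$, which send variables to elements of $\W$, and $W=E$ is \emph{not} embedded in $E\langle X\rangle$ (this is stated explicitly in Section \ref{sezione4}); the $e_i$'s occur only as coefficients attached to variables, not as elements of the free $E$-algebra, so $x\mapsto e_i$ is not a substitution. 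This is not a pedantic point: if variables could be specialized to coefficients, then $[e_i,x_1,x_2]$ would be a consequence of $[x_1,x_2,x_3]$, and the paper's Specht counterexample would collapse --- indeed, the corollary following Theorem \ref{identitadiE} argues that $\I^E(E)$ is not finitely generated precisely because $[e_i,x_1,x_2]$ \emph{cannot} be deduced from $[x_1,x_2,x_3]$, for the very reason that $E$ is not embedded in $E\langle X\rangle$.

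The correct route, which is the paper's, is to derive both polynomials from the generator $[e_i,x_1,x_2]$, which already carries the coefficient. The second polynomial is obtained from $[e_i,x_1,x_2]$ by the same commutator manipulations used for the first (substituting variables by products of variables and using the centrality of $[e_i,x]$ modulo the ideal), in parallel with \cite[Theorem 4.1.8]{GZbook}. The third follows from the Jacobi identity: $[x_1,x_2,e_i]=[e_i,x_2,x_1]-[e_i,x_1,x_2]$, and each summand is a renaming of the generator. For the fourth, the key step your sketch leaves out is the substitution $x_1\mapsto x_1e_j$ in $[e_i,x_1,x_2]$ --- this one \emph{is} legitimate, since $x_1e_j\in\W$; expanding $[e_i,x_1e_j,x_2]$ via \eqref{unpolinomiotrivialedigrassmann} together with the $E$-trivial identities $[x,g]=0$ and $[e_i,x,e_j]=0$, and reducing modulo $[e_i,x_1,x_2]$, yields exactly $[e_i,x_1][e_j,x_2]+2[x_1,x_2]e_ie_j$.
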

\begin{proof}
    Following the lines of \cite[Theorem 4.1.8]{GZbook}, one can prove that $[x_1,x_2,x_3]$ implies $[x_1,x_2][x_3,x_4]+[x_1,x_4][x_3,x_2]$ and $[e_i,x_1,x_2]$ implies $[e_i,x_2][x_3,x_4]+[e_i,x_4][x_3,x_2].$ Moreover, using Jacobi identity, it is clear that $[e_i,x_1,x_2]$ implies also $[x_1,x_2,e_i].$

    Now, take $[e_i,x_1,x_2]$ and substitute $x_1e_j$ instead of $x_1.$ Using $[x,g]=0,$ \eqref{unpolinomiotrivialedigrassmann} and $[e_i,x_1,x_2]\in \I^E(E^{(k)}),$ we get that 
    \begin{equation*}
        \begin{split}
             [e_i,x_1e_j, x_2]&= [e_i,x_1e_j]x_2-x_2[e_i,x_1e_j] = [e_i,x_1]e_jx_2+2x_1e_ie_jx_2-x_2[e_i,x_1]e_j-2x_2x_1e_ie_j \\
            &=[e_i,x_1]e_jx_2 -x_2[e_i,x_1]e_j+2x_1x_2e_ie_j-2x_2x_1e_ie_j 
            = [e_i,x_1]e_jx_2-x_2[e_i,x_1]e_j +2[x_1,x_2]e_ie_j \\
            &\equiv [e_i,x_1]e_jx_2 - [e_i,x_1]x_2 e_j +2[x_1,x_2]e_ie_j = [e_i,x_1][e_j,x_2] +2[x_1,x_2]e_ie_j.
        \end{split}
    \end{equation*}
    Thus $[e_i,x_1,x_2]$ imply $[e_i,x_1][e_j,x_2] + 2[x_1,x_2]e_ie_j$ and we are done.
\end{proof}

We are now in a position to compute a basis of $\I^E(E^{(k)}).$ In the next theorem, recall that the \emph{support} of $g\in E,$ denoted by $\supp(g),$ is the set of all the $e_i$'s appearing in $g.$ Moreover, we denote by $E_1$ the span over $F$ of all the words in the generators of $E$ of odd length.
 
\begin{theorem}\label{identitadiEk}
    Let $E^{(k)},$ with $k\geq 1$, be the unital Grassmann algebra $E$ over a field $F$ of characteristic zero regarded as an $E$-algebra, where $E$ acts as the subalgebra $E_k$ by left and right multiplication.
    Then $\I^E(E^{(k)})$ is generated, as $T_E$-ideal, by the polynomials 
    \begin{equation*}
    [x_1,x_2,x_3], \quad [e_i,x_1,x_2], \quad e_jx, \quad xe_j
\end{equation*}
for all $1\leq i\leq k$ and for all $j\geq k+1.$ Moreover, $c^E_n(E^{(k)}) = 2^{n-1}(2^{k+1}-1)$ for all $n\geq 1.$
\end{theorem}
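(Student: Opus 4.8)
The plan is to prove two things: that the listed polynomials generate $\I^E(E^{(k)})$ as a $T_E$-ideal, and that the resulting codimension sequence equals $2^{n-1}(2^{k+1}-1)$. Let me denote by $I$ the $T_E$-ideal generated by $[x_1,x_2,x_3]$, $[e_i,x_1,x_2]$ for $1\le i\le k$, and $e_jx$, $xe_j$ for $j\ge k+1$. Since the direct calculation before the theorem already shows $I\subseteq \I^E(E^{(k)})$, the task is the reverse inclusion together with the dimension count. The standard strategy is to produce a spanning set for the multilinear quotient $P_n^E/(P_n^E\cap I)$ of size exactly $2^{n-1}(2^{k+1}-1)$, and then show these images are linearly independent modulo $\I^E(E^{(k)})$ by exhibiting explicit evaluations in $E$; equality of the two dimensions then forces $I=\I^E(E^{(k)})$.

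First I would reduce every multilinear monomial modulo $I$ to a canonical form. The relations $e_jx\equiv xe_j\equiv 0$ for $j\ge k+1$ kill any coefficient outside $E_k$, so only the basis elements $w_{i_0},\dots,w_{i_n}$ drawn from a basis of $E_k$ survive. Next, $[x_1,x_2,x_3]\equiv 0$ together with its consequence $[x_1,x_2][x_3,x_4]+[x_1,x_4][x_3,x_2]$ (Lemma \ref{conseguenzeGrassmann}) lets me rewrite any product of variables, up to lower commutator content, so that each monomial is a product of an ordered ``staircase'' part and at most one commutator, exactly as in the ordinary Grassmann case \cite[Theorem 4.1.8]{GZbook}. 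The genuinely new relations are those involving the coefficients $e_i$ with $1\le i\le k$: the identity $[e_i,x_1,x_2]\equiv 0$ and its consequences $[x_1,x_2,e_i]\equiv 0$ and $[e_i,x_1][e_j,x_2]\equiv -2[x_1,x_2]e_ie_j$ from Lemma \ref{consequenzeGrassmann} allow me to move all the $e_i$-coefficients past commutators and collect them into a single central monomial $g\in E_k$ multiplying the variable part. The upshot is a canonical form in which each reduced monomial is specified by an ordering datum on the variables together with a choice of central coefficient $g$, and I would count these to reach the claimed value $2^{n-1}(2^{k+1}-1)$.

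For the counting itself I expect the variable part to contribute the factor $2^{n-1}$, matching the ordinary commutator structure of $E$ (where $c_n(E)=2^{n-1}$), and the coefficient choices from $E_k$ to contribute the factor $2^{k+1}-1$. Concretely, a reduced multilinear element is determined by the placement of a single commutator bracket $[x_{i_1},\dots,x_{i_s}]$ among the ordered remaining variables, together with an element $g$ of a basis of $E_k$; the relation $[e_i,x_1][e_j,x_2]\equiv -2[x_1,x_2]e_ie_j$ is what prevents double-counting by forcing products of $e_i$-brackets to collapse into central coefficients. The arithmetic $2^{n-1}\cdot 2=2^n$ for the length-count together with a careful inclusion-exclusion over which coefficient slots carry a nontrivial $e_i$ should yield $2^{k+1}-1$; I would verify the small cases $n=1$ and $k=1$ by hand to pin down the exact combinatorics.

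The main obstacle will be the linear independence half: proving that the $2^{n-1}(2^{k+1}-1)$ canonical monomials are genuinely independent modulo $\I^E(E^{(k)})$, not merely a spanning set. This requires constructing, for each canonical monomial, an explicit substitution $x_r\mapsto$ (element of $E$) whose evaluation isolates that monomial's coefficient, mirroring the evaluations used in \cite{MartinoRizzo2024} and in the proof of Proposition \ref{pro: M_n e UT_2}. The delicate point is the interaction between the odd part $E_1$ (which produces the commutator content) and the central coefficients $e_i$: I would assign the variables values in $E_1$ to activate the bracket structure while choosing the coefficient evaluations so that distinct central monomials $g$ land on distinct basis elements of $E$, using the nondegeneracy of the pairing on the Grassmann algebra. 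Once both inclusions $I\subseteq\I^E(E^{(k)})$ and the dimension equality are established, $I=\I^E(E^{(k)})$ follows, and the codimension formula is immediate from the count.
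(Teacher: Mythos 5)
Your overall strategy --- verify the listed polynomials are identities, produce a spanning set of $P_n^E$ modulo the $T_E$-ideal $I$ they generate, prove linear independence modulo $\I^E(E^{(k)})$ by explicit evaluations, and count --- is exactly the paper's strategy. But the central combinatorial step, the normal form, is wrong. You reduce every multilinear polynomial to an ordered prefix times \emph{a single} long commutator $[x_{i_1},\dots,x_{i_s}]$ times a coefficient monomial. That is the $UT_2$-type normal form of \cite[Theorem 4.1.5]{GZbook} (the one this paper uses in Proposition \ref{pro: M_n e UT_2}), and it is incompatible with the identity you are factoring out: modulo $[x_1,x_2,x_3]$ every left-normed commutator of length at least $3$ vanishes, so all your canonical elements with $s\geq 3$ are zero in the quotient; conversely, products of several length-two commutators, such as $[x_1,x_2][x_3,x_4]e_{l_1}\cdots e_{l_s}$, are not identities of $E^{(k)}$ (evaluate at generators $e_j$ with $j>k$) and cannot be rewritten in your form. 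The correct normal form, which the paper obtains via Poincar\'e--Birkhoff--Witt plus Lemma \ref{conseguenzeGrassmann}, is: ordered prefix times a \emph{product of length-two commutators}, where in even proper degree the brackets contain only variables and the tail $e_{l_1}\cdots e_{l_s}$ ranges over all $2^k$ subsets of $\{1,\dots,k\}$, while in odd proper degree exactly one bracket has the form $[e_{l_1},x]$, forcing $L\neq\emptyset$ and leaving $2^k-1$ choices. The count is then $\sum_h\binom{n}{2h}2^k+\sum_h\binom{n}{2h+1}(2^k-1)=2^k2^n-2^{n-1}=2^{n-1}(2^{k+1}-1)$. Your arithmetic sketch (``$2^{n-1}\cdot 2=2^n$'' plus an unspecified inclusion--exclusion) cannot be extracted from your normal form, which yields only polynomially many (in $n$) elements per coefficient choice; note also that ``a single central monomial $g\in E_k$'' is imprecise, since odd-length products $e_{l_1}\cdots e_{l_s}$ are not central in $E$ and in the odd case one $e_{l}$ must remain inside a bracket.

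The independence half of your plan is closer to workable, but ``nondegeneracy of the pairing on the Grassmann algebra'' is not the mechanism, and the actual argument must be spelled out. For a proper polynomial of fixed degree (whose variable bracket structure is unique after sorting), one can evaluate $x_i\mapsto g_i$ with each $g_i$ an odd-length monomial, supports pairwise disjoint and disjoint from $\{e_1,\dots,e_k\}$; then each bracket doubles a product and the term with tail $L$ evaluates to $\pm 2^m g_1\cdots g_{2m}e_{l_1}\cdots e_{l_s}$, and these are distinct basis monomials of $E$ for distinct $L$, forcing all coefficients to vanish. The paper argues slightly differently: it takes $L$ with $|L|$ minimal among nonzero coefficients and loads all generators of $\{1,\dots,k\}\setminus L$ into the evaluation of $x_1$, so that every competing term vanishes by a repeated generator, and minimality rules out the rest. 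Either route works, but one of them is needed explicitly; and you also need the standard reduction (using that $E$ is unital) from arbitrary multilinear polynomials to proper ones, which is what justifies weighting the proper counts by the binomial coefficients $\binom{n}{t}$ in the first place.
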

\begin{proof}
    Let $I$ be the $T_E$-ideal generated by the above generalized polynomials. A straightforward computation shows that $I\subseteq \I^E(E^{(k)}).$ Thus in order to prove the opposite inclusion, let $f\in\I^E(E^{(k)})$ be a multilinear generalized polynomial of degree $n$ and suppose, as we may, that in $f$ does not appear any $e_j$ for all $j\geq k+1.$ 

    By the Poincaré-Birkhoff-Witt Theorem and by $[x_1,x_2,e_i]\equiv 0$, $f$ can be written as a linear combination of polynomials of the type
    $$
    x_{c_1}\cdots x_{c_t}v_1v_2\cdots v_me_{l_1}\cdots e_{l_s},
    $$
    where $c_1<\cdots < c_t,$ $l_1<\cdots <l_s,$ $0\leq s\leq k$ and $v_1,\ldots, v_m$ are left-normed commutators containing variables and $e_i$'s. Moreover, since $E$ has unity, without loss of generality in what follows we can disregard the tail $x_{c_1}\cdots x_{c_t}$ and suppose that the variables $x_1,\ldots,x_n$ in $f$ appear exclusively within the commutators.
    
    Now, using Lemma \ref{identitadiEk} and the $W$-trivial polynomials mentioned above, we can write $f$ as linear combination of the polynomials
    \begin{equation}\label{polinomigradopari}
    [x_{1},x_{2}][x_3,x_4]\cdots [x_{2m-1},x_{2m}]e_{l_1}\cdots e_{l_s},
    \end{equation}
    if $n=2m$ is even or 
    \begin{equation}\label{polinomigradodispari}
    [e_{l_1},x_{1}][x_2,x_3]\cdots [x_{2m},x_{2m+1}]e_{l_2}\cdots e_{l_s},
    \end{equation}
    if $n=2m+1$ is odd. In both cases $l_1<\cdots <l_s$ and $0\leq s\leq k.$ This implies that such polynomials generate $P^W_n$ modulo the generalized identities of $E.$

    We claim that they are also linearly independent modulo $\I^E(E^{(k)}).$ To this end, first suppose that $\deg f= 2m$ and write
    $$
    f=  \sum_{L}\alpha_L[x_{1},x_{2}][x_3,x_4]\cdots [x_{2m-1},x_{2m}]e_{l_1}\cdots e_{l_s} \quad(\text{mod }\I^E(E^{(k)})),
    $$
    where $L=\{l_1,\ldots, l_s\}.$ Let $s$ be the smallest integer for which by contradiction there exists $L$ such that $\alpha_L\neq 0.$ Then, by making the evaluation $x_1\mapsto g_1= e_{j_1}\cdots e_{j_t}g'$ and $x_i\mapsto g_i$ for all $2\leq i\leq 2m,$ where $\{j_1,\ldots, j_t\}\uplus L=\{1,\ldots, k\},$ $g'\in E$ such that $g_1\in E_1,$ $\supp(g')\cap \{j_1,\ldots, j_t\}=\emptyset,$ $g_2,\ldots, g_{2m}\in E_1$ and $\supp(g_i)\cap\supp(g_j)=\emptyset$ for all $i\neq j,$ we get $2^m \alpha_L g_1g_2\cdots g_{2m}e_{l_1}\cdots e_{l_s}=0 $ that is $\alpha_L=0,$ a contradiction. Therefore, $f=0$ and we are done in this case.

    Now suppose that $\deg f=2m+1$ and write
    $$
    f=  \sum_{L}\alpha_L[e_{l_1},x_{1}][x_2,x_3]\cdots [x_{2m},x_{2m+1}]e_{l_2}\cdots e_{l_s} \quad(\text{mod }\I^E(E^{(k)})),
    $$
    where $L=\{l_1,\ldots, l_s\}.$ With the same evaluation as before, one can get $f=0$ also in this case. 
    
    Thus these polynomials are linearly independent modulo $\I^E(E^{(k)})$ and since $P^E_n\cap I\subseteq P_n^E\cap\I^E(E^{(k)}),$ this proves that $I=\I^E(E^{(k)})$ and the polynomials in \eqref{polinomigradopari} and \eqref{polinomigradodispari} form a basis of $P_n^E$ modulo $P_n^E\cap\I^E(E^{(k)}).$

    If we set $\gamma_{2m}(E)$ and $\gamma_{2m+1}(E)$ to be the number of polynomials in \eqref{polinomigradopari} and \eqref{polinomigradodispari}, respectively, then it is clear that $\gamma_{2m}(E)=2^k,$ $\gamma_{2m+1}(E)= 2^k-1$ for all $m\geq 1$ and
    $$
    c_n^E(E^{(k)})=\sum_{t=0}^n\binom{n}{t}\gamma_t(E).
    $$
    Therefore by counting
    \begin{equation*}
    \begin{split}
        c_n^E(E^{(k)}) &= \sum_{h=0}^{\lfloor \frac{n}{2}\rfloor}\binom{n}{2h}2^k+\sum_{h=0}^{\lfloor \frac{n-1}{2}\rfloor}\binom{n}{2h+1}(2^k-1) = 2^k\left[\sum_{h=0}^{\lfloor \frac{n}{2}\rfloor}\binom{n}{2h}+\sum_{h=0}^{\lfloor \frac{n-1}{2}\rfloor}\binom{n}{2h+1}\right]- \sum_{h=0}^{\lfloor \frac{n-1}{2}\rfloor}\binom{n}{2h+1}
        \\
        &= 2^{n-1}(2^{k+1}-1),
    \end{split}
    \end{equation*}
    as claimed.
\end{proof}

As a consequence, we have the following.

\begin{corollary}\label{cor: exp of E}
    The limit $ \exp^E(E^{(k)})=\lim_{n\to \infty}\sqrt[n]{c_n^E(E^{(k)})}$ exists, and precisely $\exp^E(E^{(k)})=2$.
\end{corollary}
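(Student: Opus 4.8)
The plan is to read off the exponent directly from the explicit codimension formula established in Theorem \ref{identitadiEk}, namely $c_n^E(E^{(k)}) = 2^{n-1}(2^{k+1}-1)$, valid for all $n\geq 1$. Since $k$ is fixed, the quantity $2^{k+1}-1$ is a positive constant independent of $n$, so the whole sequence is a constant multiple of $2^{n-1}$, and computing the limit of its $n$-th root is a one-line exercise.

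Concretely, I would write
\begin{equation*}
\sqrt[n]{c_n^E(E^{(k)})} = \sqrt[n]{2^{n-1}(2^{k+1}-1)} = 2^{\frac{n-1}{n}}\,(2^{k+1}-1)^{1/n}.
\end{equation*}
As $n\to\infty$, the first factor $2^{(n-1)/n}=2^{1-1/n}$ tends to $2^1=2$, and the second factor $(2^{k+1}-1)^{1/n}$ tends to $1$, since for any fixed positive constant $c$ one has $c^{1/n}\to 1$. Hence the product converges and $\exp^E(E^{(k)}) = 2$, which also shows the limit exists.

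There is essentially no obstacle here: the entire difficulty has already been absorbed into the proof of Theorem \ref{identitadiEk}, where the basis of $P_n^E$ modulo $\I^E(E^{(k)})$ was exhibited and counted. The corollary is a purely computational consequence, and the only thing to verify is the standard fact that a sub-exponential prefactor (here the constant $2^{k+1}-1$) does not affect the exponential rate. I would therefore keep the proof to the two displayed limit computations above and conclude directly.
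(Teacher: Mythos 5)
Your proposal is correct and matches the paper's intent exactly: the corollary is stated as an immediate consequence of the codimension formula $c_n^E(E^{(k)}) = 2^{n-1}(2^{k+1}-1)$ from Theorem \ref{identitadiEk}, and your two-line limit computation is precisely the argument the paper leaves implicit.
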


With similar techniques we can compute also the $T_W$-ideal of generalized identities of $E$ when $W$ acts by left and right multiplication as the full algebra $E.$

\begin{theorem}\label{identitadiE}
    Let $E$ be the unital Grassmann algebra over a field $F$ of characteristic zero regarded as an $E$-algebra, where $E$ acts on itself by left and right multiplication. Then $\I^{E}(E)$ is generated, as $T_E$-ideal, by the polynomials 
    \begin{equation}\label{generatoriE}
    [x_1,x_2,x_3], \quad [e_i,x_1,x_2],
\end{equation}
for all $i\geq 1.$ Moreover, $c^{E}_n(E) = +\infty$ for all $n\geq 1.$
\end{theorem}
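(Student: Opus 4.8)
The plan is to prove the two assertions separately: first that the listed polynomials generate $\I^E(E)$ as a $T_E$-ideal, and then that every generalized codimension is infinite. For the generating set, I would mirror the strategy of Theorem \ref{identitadiEk} but with the crucial difference that \emph{no} $e_j$ is annihilated, since now $\overline{W}=\Phi(W)\cong E$ acts in full. First I would verify the easy inclusion: a direct computation shows that $[x_1,x_2,x_3]$ and $[e_i,x_1,x_2]$ vanish on $E$ under the full action, so the $T_E$-ideal $I$ they generate satisfies $I\subseteq\I^E(E)$. For the reverse inclusion, I would take a multilinear $f\in\I^E(E)$ of degree $n$ and reduce it modulo $I$. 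The same consequences recorded in Lemma \ref{conseguenzeGrassmann} remain available here (their proofs never used $j\geq k+1$), together with the $E$-trivial relations $[x,g]\equiv 0$ for central $g$ and $[x_1,x_2,e_i]\equiv 0$. Using Poincaré--Birkhoff--Witt and these relations, I would normalize $f$ into a linear combination of products of commutators with a trailing tail $e_{l_1}\cdots e_{l_s}$, exactly as before; the only change is that the index set for the $e_{l}$'s is now unbounded, ranging over all of $\N$ rather than $\{1,\dots,k\}$.

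The second, more striking assertion is that $c_n^E(E)=+\infty$ for every $n\geq 1$. The key point is that the normal forms \eqref{polinomigradopari} and \eqref{polinomigradodispari} now come in infinitely many inequivalent types, because the tail $e_{l_1}\cdots e_{l_s}$ can use arbitrarily large generators and arbitrarily large length $s$ (there is no bound $s\leq k$). Concretely, I would exhibit, even for $n=1$, an infinite linearly independent family modulo $\I^E(E)$: for instance the generalized monomials $e_{l_1}e_{l_2}\cdots e_{l_s}\,x_1$ (or with the single variable placed appropriately) as $s$ and the indices $l_1<\cdots<l_s$ vary. The linear independence modulo $\I^E(E)$ follows from the same separation-of-support evaluation used in Theorem \ref{identitadiEk}: given a finite nontrivial combination, choose the index set $L$ of minimal cardinality with nonzero coefficient, and evaluate the variables on products of odd-length basis elements $g_i\in E_1$ whose supports are pairwise disjoint and disjoint from $L$; this isolates the corresponding coefficient and forces it to vanish, a contradiction. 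Since the family is infinite and lives inside $P_n^E$ modulo $P_n^E\cap\I^E(E)$, the quotient $P_n^E(E)$ is infinite dimensional, i.e.\ $c_n^E(E)=+\infty$.

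I would then assemble the count heuristically to confirm the divergence: writing $\gamma_t(E)$ for the number of admissible normal forms of ``commutator-degree'' $t$, one has $c_n^E(E)=\sum_{t=0}^n\binom{n}{t}\gamma_t(E)$ as in the finite-dimensional case, but here each $\gamma_t(E)$ is already infinite because the admissible tails $e_{l_1}\cdots e_{l_s}$ form an infinite set. This makes transparent why passing from $E_k$ to the full algebra $E$ turns the finite value $2^{n-1}(2^{k+1}-1)$ into $+\infty$, and it dovetails with the discussion preceding Conjecture \ref{congettura}, since $W=E$ is not finitely generated.

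The main obstacle I anticipate is the reduction-to-normal-form step for the generating claim: I must check carefully that, with the full action present, every multilinear $f$ can still be brought into the span of \eqref{polinomigradopari}--\eqref{polinomigradodispari}-type elements using only consequences of \eqref{generatoriE} and the $E$-trivial relations. In particular one has to handle the relation \eqref{unpolinomiotrivialedigrassmann}, which mixes a variable and a generator inside a commutator, and verify that the identity $[e_i,x_1][e_j,x_2]+2[x_1,x_2]e_ie_j$ from Lemma \ref{conseguenzeGrassmann} suffices to push all occurrences of generators out of the commutators and into the trailing product for \emph{all} $i,j\geq 1$, not merely $i\leq k$. Once this normalization is secured, both the generating statement and the infinitude of the codimensions follow from the evaluation argument essentially verbatim.
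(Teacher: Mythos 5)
Your proposal is correct and follows essentially the same route as the paper, whose own proof simply says to repeat the argument of Theorem \ref{identitadiEk} with the bound $s\leq k$ on the tails $e_{l_1}\cdots e_{l_s}$ removed, then deduces both the generation statement and $c_n^E(E)=+\infty$ from the linear independence of the (now infinitely many) normal forms via the same support-separation evaluations. Your additional checks (that Lemma \ref{conseguenzeGrassmann} and relation \eqref{unpolinomiotrivialedigrassmann} never used $j\geq k+1$, and the explicit infinite independent family) are correct elaborations of details the paper leaves implicit.
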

\begin{proof}
    Following the lines of the previous Theorem, one can prove that modulo $\I^E(E),$ any multilinear polynomial $f$ can be written as linear combination of
    \begin{equation*}
    [x_{1},x_{2}][x_3,x_4]\cdots [x_{2m-1},x_{2m}]e_{l_1}\cdots e_{l_s},
    \end{equation*}
    if $n=2m$ is even or 
    \begin{equation*}
    [e_{l_1},x_{1}][x_2,x_3]\cdots [x_{2m},x_{2m+1}]e_{l_2}\cdots e_{l_s},
    \end{equation*}
    if $n=2m+1$ is odd. In both cases $l_1<\cdots <l_s$, but here we have no limit on the value of $s.$ It is also easily proven that these polynomials are linearly independent modulo $\I^E(E),$ thus the polynomials in \eqref{generatoriE} generate $\I^E(E)$ and consequently $c_n^E(E)=+\infty$ for all $n\geq 1.$
\end{proof}

\begin{corollary}
    The $T_E$-ideal of generalized polynomial identities of the Grassmann algebra $E$ with either the action by left and right multiplication of $E_k,$ for any $k,$ or the action of $E,$ has not the Specht property.
\end{corollary}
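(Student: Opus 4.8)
The plan is to establish the final corollary by leveraging the two preceding theorems together with the definition of the Specht property stated at the beginning of the section. Recall that a class has the Specht property if every $T$-ideal is finitely generated. Since the corollary asserts the \emph{failure} of this property, it suffices to exhibit a single $T_E$-ideal that is not finitely generated. Theorems \ref{identitadiEk} and \ref{identitadiE} already hand us the candidates: the $T_E$-ideals $\I^E(E^{(k)})$ and $\I^E(E)$. The content of the corollary is therefore not to compute anything new, but to argue that the generating sets exhibited in those theorems cannot be replaced by finite ones, given the standing assumption that $W=E$ is \emph{not} finitely generated.

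First I would make explicit the role of the non-finite-generation of $W$. In Theorem \ref{identitadiEk} the generators involve $[e_i,x_1,x_2]$ for all $1\le i\le k$ together with $e_jx,\,xe_j$ for all $j\ge k+1$, and in Theorem \ref{identitadiE} the generators are $[x_1,x_2,x_3]$ and $[e_i,x_1,x_2]$ for \emph{all} $i\ge 1$. Because $W=E$ has a countably infinite basis and the acting elements $e_i$ are genuinely distinct coefficients in $\W$, the infinite family $\{[e_i,x_1,x_2]\}_{i\ge1}$ (respectively the infinite family $\{e_jx,\,xe_j\}_{j\ge k+1}$) cannot be generated by any finite subcollection: each $e_i$ lives in an independent coordinate of the coefficient algebra, so no $W$-algebra endomorphism and no finite $F$-linear combination can produce $[e_{i_0},x_1,x_2]$ from generators not involving $e_{i_0}$. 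The clean way to formalize this is to observe that evaluating on a suitable substitution isolates the coefficient $e_{i_0}$, exactly as the linear-independence arguments in the proofs of Theorems \ref{identitadiEk} and \ref{identitadiE} already do; reusing that evaluation machinery shows that dropping any single generator strictly enlarges the quotient.

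The key steps, in order, are: (i) recall the definition of the Specht property and reduce the corollary to producing one non-finitely-generated $T_E$-ideal; (ii) invoke Theorems \ref{identitadiEk} and \ref{identitadiE} to pin down the generators of $\I^E(E^{(k)})$ and $\I^E(E)$ explicitly; (iii) show that any finite subset of these generators fails to generate the whole $T_E$-ideal, by exhibiting for a removed generator (say one indexed by $e_{i_0}$ with $i_0$ exceeding all indices in the chosen finite subset) an evaluation into $E$ on which the omitted generator does not vanish while all retained generators and their consequences do. This last step is where the essential argument lies and is the main obstacle: one must argue that \emph{no} finite generating set exists, not merely that the displayed one is infinite. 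This requires showing that the $e_i$ contribute linearly independent data to the codimension count — which is precisely why the codimension computations ($c_n^E(E^{(k)})=2^{n-1}(2^{k+1}-1)$ in the finite case and $c_n^E(E)=+\infty$ in the infinite case) are the right quantitative witnesses. In the full-algebra case the infinitude of $c_n^E(E)$ settles it immediately, since a finitely generated $T_E$-ideal over a finitely generated $W$ would force finite codimensions; in the $E^{(k)}$ case one instead tracks that the coefficients $e_{k+1},e_{k+2},\dots$ each force an independent relation $e_jx,\,xe_j\equiv 0$ that cannot arise from finitely many generators when $W=E$ is not finitely generated.

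Finally I would tie the argument back to Conjecture \ref{congettura}: the corollary is precisely the predicted failure when $W$ is not finitely generated, so it simultaneously serves as the promised counterexample to the Specht property in characteristic zero and as evidence that finite generation of $W$ is the correct hypothesis. The conclusion then follows by noting that $\I^E(E^{(k)})$ for any $k$, and $\I^E(E)$, are all $T_E$-ideals lacking a finite generating set, so the class of $E$-algebras with $W=E$ not finitely generated does not have the Specht property.
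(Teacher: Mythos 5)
Your high-level plan coincides with the paper's: invoke Theorems \ref{identitadiEk} and \ref{identitadiE}, show the displayed infinite generating sets are irredundant (no generator follows from the remaining ones), and conclude via the standard compactness observation that a finite generating set would have to be a set of consequences of finitely many displayed generators. The gap is in how you propose to prove irredundancy. Your separation device --- ``an evaluation into $E$ on which the omitted generator does not vanish while all retained generators and their consequences do'' --- cannot exist: every displayed generator is a generalized identity of the $E$-algebra in question ($E^{(k)}$ or $E$), so it vanishes under \emph{every} evaluation, including the omitted one. Evaluations into the algebra itself detect only membership in $\I^E(E^{(k)})$, never membership in a proper sub-$T_E$-ideal; for the same reason, ``reusing the evaluation machinery'' from the theorems is not available, since those evaluations prove linear independence \emph{modulo} $\I^E(E^{(k)})$, which is a different statement. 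Your informal remark that each $e_{i_0}$ ``lives in an independent coordinate'' is in fact the right idea, but it must be formalized \emph{syntactically} inside the free $E$-algebra, which is what the paper does: consequences of $[x_1,x_2,x_3]$ have degree at least $3$ in each monomial because $E$ is not embedded in $E\langle X\rangle$ (substitutions send variables to polynomials with no constant terms, so the degree cannot drop to $2$), and consequences of the generators involving $e_{j'}$ retain $e_{j'}$ in the support of their coefficients by the multiplication rules of $E$, so they can never produce a generator indexed by a different $e_i$. Note also that the alternative fix --- evaluating in a \emph{different} $E$-algebra structure, i.e.\ a different acting homomorphism --- is genuinely obstructed in the full-action case: commutators in $E$ are central, so $[w,x_1,x_2]$ vanishes on $E$ under any possible action, and one would be forced back to the free construction, i.e.\ to the syntactic argument.

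The second flaw is your shortcut for $\I^E(E)$: ``the infinitude of $c_n^E(E)$ settles it immediately'' is a non sequitur. Infinite codimensions are compatible with finitely generated $T_E$-ideals: when $\dim_F W=\infty$ the space $P_n^W$ is itself infinite dimensional, so for instance the zero $T_E$-ideal (trivially finitely generated) has infinite codimensions. The implication you invoke (finitely generated $T_E$-ideal over finitely generated $W$ forces finite codimensions) is inapplicable here --- $W=E$ is \emph{not} finitely generated, so its contrapositive gives no information about $\I^E(E)$ --- and is false in general anyway. So the full-action case cannot be dispatched quantitatively; it requires the same syntactic irredundancy argument (degree plus coefficient support) that the paper gives.
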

\begin{proof}
    The claim follows directly from Theorems \ref{identitadiEk} and \ref{identitadiE}. In fact, concerning $\I^E(E^{(k)}),$ it is clear that for instance the identity $e_jx$ for any fixed $j\geq k+1$ can not follow from the remaining ones. Similarly, for $\I^E(E),$ any identity $[e_i,x_1,x_2]$ can not follow from $[x_1,x_2,x_3]$ since the latter one has degree 3 and, by definition, $E$ is not embedded in $E\langle X\rangle$. Also, by the multiplication rules in $E,$ it can not be followed by any other identity of the type $[e_j,x_1,x_2]$ with $j\neq i.$
\end{proof}

Notice that the previous corollary shows that the Specht property may fail whenever $W$ is not finitely generated, regardless of its action on $A,$ strengthening Conjecture \ref{congettura}.

\smallskip

Finally, note that Theorems \ref{identitadiEk} and \ref{identitadiE} suggest that the decisive factor determining the behavior of the codimensions is neither the dimension of $W$ itself nor that of $A$, but rather the finiteness of the action of $W$ on $A$, that is, the finiteness of the dimension of $\Phi(W)$. Moreover, as a consequence of Corollary \ref{cor: exp of E}, we find that the $W$-exponent of $E^{(k)}$ is equal to the ordinary exponent of $E$. Therefore, we formulate the following conjecture.

\begin{conjecture}\label{congettura2}
    Let $W$ be a unital $F$-algebra over a field of characteristic zero. If $A$ is a $W$-algebra such that the action of $W$ on $A$ is finite, then the limit 
    $$\exp^W(A)=\lim_{n\to \infty}\sqrt[n]{c_n^W(A)}$$
    exists and is a nonnegative integer. Furthermore, 
   $$\exp^W(A)=\exp(A).$$
\end{conjecture}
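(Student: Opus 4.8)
The plan is to sandwich the generalized codimension sequence between two multiples of the ordinary one and to show that the multiplicative gap is subexponential in $n$, so that the two exponential rates must coincide. Throughout I assume, as the statement implicitly forces, that $A$ satisfies an ordinary polynomial identity, so that by the Giambruno--Zaicev theorem $\exp(A)$ exists and is a nonnegative integer; if not, both exponents are infinite and there is nothing to prove. I first record three reductions. Since $W$-identities depend only on the image $\overline{W}=\Phi(W)\subseteq\M(A)$ and the action is finite, I may replace $W$ by the finite-dimensional unital algebra $\overline{W}$ without changing any $c_n^W(A)$; thus from now on $\dim_F\overline{W}=m<\infty$. As in the proof of Lemma~\ref{lem:unital W-algebra PG}, generalized codimensions are unchanged under extension of scalars, so I may assume $F$ algebraically closed. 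Finally, if $A$ is non-unital I would pass to the unital semi-direct product $\widehat A=\overline{W}\ltimes A$ of Proposition~\ref{prop: semi-direct product}: since $A$ is a $W$-subalgebra of $\widehat A$ one has $c_n^W(A)\le c_n^W(\widehat A)$, and one checks that adjoining the finite-dimensional $\overline W$ leaves the ordinary exponent unchanged, $\exp(\widehat A)=\exp(A)$, reducing everything to the unital case.

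The lower bound is immediate: ordinary multilinear polynomials embed into $W$-multilinear ones by using only the coefficient $w_0=1_W$, and for such polynomials the $W$-evaluations agree with the ordinary ones, so $P_n\cap\I^W(A)=P_n\cap\I(A)$ and $c_n(A)\le c_n^W(A)$, whence $\exp(A)\le\exp^W(A)$. For the upper bound I use that, in the unital case, $\overline W\cong B\subseteq A$ acts by left and right multiplication (Proposition~\ref{Wazioni per algebre con unit}), with basis $b_0=1,b_1,\dots,b_{m-1}$, so a multilinear $W$-monomial is an honest element $b_{i_0}x_{\sigma(1)}b_{i_1}\cdots x_{\sigma(n)}b_{i_n}$ of the relatively free algebra of $A$. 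The naive count gives only $c_n^W(A)\le m^{\,n+1}c_n(A)$, yielding the useless bound $\exp^W(A)\le m\exp(A)$; the real content is that modulo $\I^W(A)$ the interspersed coefficients collapse into a factor that is \emph{subexponential} in $n$, exactly as in Theorem~\ref{identitadiEk}, where the $2^{k+1}-1$ admissible tails $e_{l_1}\cdots e_{l_s}$ replace the a priori exponential number of coefficient patterns.

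I would make this collapse structural as follows. Writing $B=B_{ss}+J(B)$, the radical $J(B)$ is nilpotent, and an argument in the spirit of Theorem~\ref{thm: semisimple action and algebra with 1} should show that coefficients drawn from $J(B)$ can be normalized to occupy only boundedly many positions, contributing a factor bounded independently of $n$. For the semisimple part $B_{ss}\cong M_{t_1}(F)\oplus\cdots\oplus M_{t_k}(F)$, the central idempotents and matrix units let one merge adjacent coefficients and reorganize each monomial into one of polynomially many canonical forms per degree. Together these reductions would produce a polynomial $p$ with $c_n^W(A)\le p(n)\,c_n(A)$, giving $\exp^W(A)\le\exp(A)$. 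Combined with the lower bound this forces $\exp^W(A)=\exp(A)$, and the existence of the limit then follows from the squeeze together with the existence of $\exp(A)$.

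The main obstacle is precisely this subexponential collapse of the coefficients. In the finite-dimensional case it is subsumed by the Wedderburn--Malcev decomposition of $A$ and by Gordienko's cocharacter estimates invoked in Theorem~\ref{thm: exponent}; for infinite-dimensional $A$ that structure is unavailable, so the argument must be carried out at the level of $S_n$-cocharacters, proving that the finitely many coefficients coming from the finite-dimensional $\overline W$ add only polynomially many irreducible constituents, with polynomially bounded multiplicities, to each degree-$n$ cocharacter of $A$. Establishing this coefficient absorption uniformly in $A$, using only $\dim_F\overline W<\infty$ and never $\dim_F A<\infty$, is the heart of the matter, and is the reason the assertion is stated here only as a conjecture.
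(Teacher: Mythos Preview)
The statement you are addressing is labeled a \emph{conjecture} in the paper and no proof is offered there; the authors only motivate it by Theorem~\ref{thm: exponent} (the finite-dimensional unital case, via Gordienko) and by Corollary~\ref{cor: exp of E} (the Grassmann computation). So there is no ``paper's own proof'' to compare against, and what you have written is in fact an outline of a possible attack on an open problem rather than an alternative argument for an established result.

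Your lower bound $c_n(A)\le c_n^W(A)$ is correct and is the easy half. You also correctly isolate where the difficulty lies: the upper bound requires showing that the $m^{n+1}$ a priori coefficient patterns collapse, modulo $\I^W(A)$, to a number that is subexponential in $n$. You are honest that this ``coefficient absorption'' is not established for infinite-dimensional $A$, and that is indeed the crux; neither the Wedderburn--Malcev structure of $A$ nor Gordienko's cocharacter machinery is available, and nothing in the paper supplies a substitute. So as a proof this is incomplete by your own admission, and the gap is genuine, not merely a matter of writing out details.

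One further point you gloss over: in the non-unital reduction you assert that $\exp(\overline{W}\ltimes A)=\exp(A)$, saying only ``one checks''. This is not obvious. The semi-direct product contains $\overline{W}$ as a subalgebra, so at the very least $\exp(\overline{W}\ltimes A)\ge\max\{\exp(A),\exp(\overline{W})\}$, and there is no a priori reason $\exp(\overline{W})\le\exp(A)$; more seriously, the interaction between the two pieces could in principle raise the exponent beyond either. Even if this can be repaired (for instance by arguing that $\overline{W}\subseteq\M(A)$ forces constraints tying $\exp(\overline{W})$ to $\exp(A)$), it needs an argument, and without it your reduction to the unital case would give only $\exp^W(A)\le\exp(\overline{W}\ltimes A)$, which may exceed $\exp(A)$.
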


\

\

\noindent\textbf{Acknowledgements:} We are grateful to Professor Michael Dokuchaev for suggesting the study of multiplier algebras.

\

\

\noindent \textbf{Competing interests} The authors declare no competing interests.

\end{document}